\documentclass[1 leqno,11pt,psfig]{amsart}
\usepackage{amssymb, amsmath,amsmath,latexsym,amssymb,amsfonts,amsbsy, amsthm,mathtools,graphicx,CJKutf8,CJKnumb,CJKulem}
\usepackage{graphicx,epsfig}
\usepackage{graphicx}
\usepackage{amssymb}
\usepackage{graphicx}
\usepackage{graphicx,epsfig}
\usepackage{amsmath}
\usepackage{mathrsfs}
\usepackage{amssymb}
\usepackage[colorlinks=true,citecolor=blue,bookmarks=true]{hyperref}

\usepackage{amssymb,mathrsfs,amsfonts,amsmath,amsbsy,tikz}
\usepackage{fontenc}
\usepackage{textcomp}


\setlength{\oddsidemargin}{0mm}
\setlength{\evensidemargin}{0mm} \setlength{\topmargin}{0mm}
\setlength{\textheight}{220mm} \setlength{\textwidth}{155mm}

\numberwithin{equation}{section}

\allowdisplaybreaks



\let\g=\gamma



\newcommand{\beq}{\begin{equation}}
\newcommand{\eeq}{\end{equation}}
\newcommand{\ben}{\begin{eqnarray}}
\newcommand{\een}{\end{eqnarray}}
\newcommand{\beno}{\begin{eqnarray*}}
\newcommand{\eeno}{\end{eqnarray*}}

\newtheorem{theorem}{Theorem}[section]

\newtheorem{lemma}[theorem]{Lemma}

\newtheorem{corollary}{Corollary}[section]

\newtheorem{remark}[theorem]{Remark}


\begin{document}

\title{Turning point principle for the stability of viscous gaseous stars}
\author{Ming Cheng}
\address{College of Mathematics, Jilin University, Changchun, 130012, China.}
\email{mcheng314@jlu.edu.cn}

\author{Zhiwu Lin}
\address{School of Mathematical Sciences, Fudan University, Shanghai, 200433, China}
\email{zwlin@fudan.edu.cn}

\author{Yucong Wang}
\address{School of Mathematics and Computational Science, Xiangtan University,  411105, China}
\email{yucongwang666@163.com}

\date{}
\maketitle

\begin{abstract}
We investigate the stability of non-rotating viscous gaseous stars, which are modeled by the Navier-Stokes-Poisson system. Based on general hypotheses concerning the equation of states, we demonstrate that the count of unstable modes in the linearized Navier-Stokes-Poisson system matches that of the linearized Euler-Poisson system modeling inviscid gaseous stars.  In particular, the turning point
principle holds  for the non-rotating stars with viscosity. This principle asserts that the stability of these stars  is determined by the mass-radius curve parameterized by
the center density. The transition of  stability only occurs at the extrema of the total mass. To substantiate our claims, we  formulate an infinite-dimensional Kelvin-Tait-Chetaev Theorem
for a class of abstract second-order linear equations with dissipation. Moreover, we prove
that linear stability implies  nonlinear asymptotic stability and  linear
instability implies   nonlinear instability for the Navier-Stokes-Poisson system under spherically symmetric perturbations.
\end{abstract}

\section{Introduction}

Consider the Navier-Stokes-Poisson system modeling Newtonian self-gravitating
viscous gaseous stars
\cite{DBZA2005,FDZT2009,JangJ2010,JJTI2013,LX2018,LX2019,LuoTao2019,LTXZZH2016,LTXZZH20162}
\begin{equation}
\left\{
\begin{array}
[c]{ll}%
\rho_{t}+\nabla\cdot(\rho\mathbf{u})=0 & \ \text{in}\ \Omega(t),\\
(\rho\mathbf{u})_{t}+\nabla\cdot(\rho\mathbf{u}\otimes\mathbf{u}%
)+\nabla\cdot\mathbf{S}=-\rho\nabla V & \ \text{in}\ \Omega(t),\\
\Delta V=4\pi\rho & \ \text{in}\ \mathbb{R}^{3},\\
\lim_{\left\vert \mathbf{x}\right\vert \rightarrow\infty}V\left(
t,\mathbf{x}\right)  =0. &
\end{array}
\right.  \label{NSP}%
\end{equation}
Here, $(\mathbf{x},t)\in\mathbb{R}^{3}\times\lbrack0,+\infty)$, $\rho
(\mathbf{x},t)\geq0$ is the density with support $\Omega(t)\subset
\mathbb{R}^{3}$, $\mathbf{u}(\mathbf{x},t)\in\mathbb{R}^{3}$ is the velocity
vector, $V(\mathbf{x},t)$ is the self-consistent gravitational potential. The stress tensor is
\[
\mathbf{S}=PI_{3\times3}-\nu_{1}\left(\nabla\mathbf{u}+\nabla\mathbf{u}^{T}%
-\frac{2}{3}(\nabla\cdot\mathbf{u})I_{3\times3}\right)-\nu_{2}(\nabla\cdot
\mathbf{u})I_{3\times3},
\]
where $I_{3\times3}$ is the $3\times3$ identical matrix, $P=P\left(  \rho\right)  $ is the pressure, $\nu_{1}>0$ is the
shear viscosity and $\nu_{2}>0$ is the bulk viscosity. We consider the general
equation of states $P\left(  \rho\right)  \ $satisfying\\
(P1) $P\left(  s\right)  \in C^{1}\left(  0,\infty\right) ,\ P^{\prime}>0,$ and $ P(0)=0$.
 There exist $\gamma_1\in\left(  \frac{6}{5},2\right)  $ and $K_1>0$ such that
\begin{equation*}
\lim_{s\rightarrow0^+}s^{1-\gamma_1}P^{\prime}\left(  s\right)
=K_{1}>0.
\end{equation*}
It implies that
the pressure $P\left(  \rho\right)  \thickapprox K_1\rho^{\gamma_1}$ for
$\rho$ near  $0$.
When the viscosity is ignored, we get the Euler-Poisson system for inviscid
gaseous stars \cite{GR1998,HU2003,J2008,R2003,SS2004}
\begin{equation}
\left\{
\begin{array}
{ll}
\rho_{t}+\nabla\cdot(\rho\mathbf{u})=0 & \ \text{in}\ \Omega(t),\\
(\rho\mathbf{u})_{t}+\nabla\cdot(\rho\mathbf{u}\otimes\mathbf{u}%
)+\nabla P=-\rho\nabla V & \ \text{in}\ \Omega(t),\\
\Delta V=4\pi\rho & \ \text{in}\ \mathbb{R}^{3},\\
\lim_{\left\vert \mathbf{x}\right\vert \rightarrow\infty}V\left(
t,\mathbf{x}\right)  =0. &
\end{array}
\right.  \label{EP}%
\end{equation}

Both the Navier-Stokes-Poisson and the Euler-Poisson systems have the same
static equilibrium configurations with $\textbf{u}=\textbf{0}$ which is called the non-rotating stars. In \cite{BWL1981}, it is shown that the density function of a non-rotating star with compact support is spherically symmetric. By Lemma 3.1 in \cite{LZ2019}, there exists $\mu_{max}\in(0,+\infty]$ such that for any center density $\rho_\mu(0)=\mu\in(0,\mu_{max})$, there is a unique non-rotating star with the density $\rho_\mu(|\mathbf{x}|)$ and the radius of the support $R_\mu<\infty$ satisfying
\[
\nabla P(\rho_{\mu})+\rho_{\mu}\nabla V_{\mu}=0,\ |\mathbf{x}|<R_\mu,
\]
where $\Delta V_{\mu}=4\pi\rho_{\mu}$. In particular, by \cite{HU2003}, we know $\mu_{max}=+\infty$ when $\gamma_1\geq \frac{4}{3}$. For the polytropic case $P\left(  \rho\right)  =K\rho^{\gamma}$  $\left(  K>0,\ \gamma>\frac{6}{5}\right)  $, $\mu_{max}=+\infty$ and the non-rotating stars are called the Lane-Emden stars in the literature.

The stability of the non-rotating stars of the Euler-Poisson system has been well
studied. For the polytropic case $P\left(  \rho\right)  =K\rho^{\gamma}$, S.-S. Lin \cite{LSS} established the linear instability of the Lane-Emden stars when $\gamma\in\left(\frac{6}{5},\frac{4}{3}\right)$ and linear stability when $\gamma\in\left[\frac{4}{3},2\right)$. In \cite{J2008}, J. Jang proved the nonlinear instability of the Lane-Emden stars for $\gamma\in\left(\frac{6}{5},\frac{4}{3}\right)$. G. Rein \cite{R2003} construct the non-rotating stars of the Euler-Poisson system for $\gamma>\frac{4}{3}$ as minimizers of a suitably defined energy functional. The minimizing property implies the nonlinear stability of such states against general.  In particular, a turning point principle for the Euler-Poisson system was proved in \cite{LZ2019}
for  very general equations of states. That is, the transition of stability only
occurs at the extrema of the total mass, for the curve of non-rotating stars
parameterized by the center density. For further  works on the turning point principle for other models, we refer to \cite{HL,LW,LWZ}.

In this paper, we consider the stability and instability of the  non-rotating stars of the Navier-Stokes-Poisson system under spherically symmetric perturbations. Consider the following linearized Navier-Stokes-Poisson system under spherically symmetric perturbations
at a non-rotating star $\left(  \rho_{\mu}(|\mathbf{x}|),\mathbf{0}\right) $:
\begin{equation}
\left\{
\begin{array}
[c]{ll}
\rho_{t}+\frac{1}{r^{2}}(r^{2}\rho_{\mu}u)_{r}=0 &
\ \text{in}\ (0,R_\mu),\\
u_{t}+\left(\Phi^{\prime\prime}(\rho_{\mu})\rho\right)_{r}+
V_{r}-\frac{\nu}{\rho_{\mu}}\left(  \frac{1}{r^{2}}\left(r^{2}u\right)_{r}\right)_{r}  =0 & \ \text{in}\ (0,R_\mu),\\
V_{r}=\frac{4\pi}{r^{2}}\int_{0}^{r}\rho(s)s^{2}ds & \ \text{in}%
\ [0,\infty),
\end{array}
\right.  \label{L-NSP}%
\end{equation}
where $\nu=\frac{4}{3}\nu_{1}+\nu_{2},\ r=|\mathbf{x}|,\ \rho(|\mathbf{x}|,t)=\rho(r,t),\ \mathbf{u}(\mathbf{x},t)=u(r,t)\frac{\mathbf{x}}{r}$  and $\Phi(\rho)$ is the enthalpy function defined by
\[
\Phi^{\prime\prime}(\rho)=\frac{P^{\prime}(\rho)}{\rho},\quad\Phi
(0)=\Phi^{\prime}(0)=0.
\]
For comparison, the linearized Euler-Poisson system  under spherically symmetric perturbations
at $\left(  \rho_{\mu}(|\mathbf{x}|),\mathbf{0}\right) $ is
\begin{equation}
\left\{
\begin{array}
{ll}
\rho_{t}+\frac{1}{r^{2}}(r^{2}\rho_{\mu}u)_{r}=0 &
\ \text{in}\ (0,R_\mu),\\
u_{t}+\left(\Phi^{\prime\prime}(\rho_{\mu})\rho\right)_r+V_r=0 & \ \text{in}\ (0,R_\mu),\\
V_{r}=\frac{4\pi}{r^{2}}\int_{0}^{r}\rho(s)s^{2}ds & \ \text{in}%
\ [0,\infty).
\end{array}
\right.  \label{L-EP}%
\end{equation}


 Our main result is to give the turning point principle for the stability of the non-rotating
viscous stars with  very general equations of states and show the relation between  the stability criteria of viscous gaseous stars and inviscid gaseous stars.  Denote
\[
M_\mu  =\int_{\mathbb{R}^{3}}\rho_{\mu}dx=\int_{|x|\leq R_{\mu}  }\rho_{\mu}dx
\]
to be the total mass of the non-rotating  star. Let $n_{EP}^{u}\left(  \mu\right)  $ and
$n_{NSP}^{u}\left(  \mu\right)  $ be the number of unstable modes of the linearized
Euler-Poisson system \eqref{L-EP} and the linearized Navier-Stokes-Poisson system \eqref{L-NSP} respectively, namely the
total algebraic multiplicities of unstable eigenvalues.
The following theorem shows that the numbers of unstable modes for viscous and
inviscid cases are equal.

\begin{theorem}
\label{nEPeqnNSP} When $\frac{dM_\mu}{d\mu}  \neq0$, we have
$n_{EP}^{u}\left(  \mu\right)  $ $=n_{NSP}^{u}\left(  \mu\right)  .$
\end{theorem}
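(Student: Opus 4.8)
The plan is to reduce both eigenvalue problems to a common spectral framework and then exploit the second-order structure. First I would rewrite the linearized systems by eliminating $\rho$ and $V$. Differentiating the continuity equation and substituting gives, in both cases, a second-order-in-time equation for $u$ of the form $\partial_{tt}u + \nu\, B u + L u = 0$ for the viscous case and $\partial_{tt}u + Lu = 0$ for the inviscid case, where $L$ is the self-adjoint operator (with respect to the weighted space $L^2_{\rho_0 r^2}$) governing the inviscid dynamics, and $B$ is a nonnegative self-adjoint operator coming from the viscous term $-\frac{\nu}{\rho_0}\partial_r(\frac{1}{r^2}\partial_r(r^2 u))$. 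The key structural facts I would establish are: $L$ is bounded below with finitely many negative eigenvalues, $\ker L$ and the negative space of $L$ are finite-dimensional, and the negative index $n^-(L)$ equals $n_{EP}^u(\mu)$ (this last identity, or its analogue, should follow from the Euler-Poisson analysis in \cite{LZ2019}, and I would quote it).

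Second, I would invoke the infinite-dimensional Kelvin-Tait-Chetaev theorem promised in the abstract for the dissipative second-order equation $\partial_{tt}u + \nu B u + Lu = 0$. The classical finite-dimensional KTC theorem states that adding complete dissipation ($B>0$) to a conservative system does not change the number of unstable directions: the number of eigenvalues with positive real part (counted with algebraic multiplicity) equals $n^-(L)$, provided $L$ is nondegenerate. The PDE version would say that $n_{NSP}^u(\mu) = n^-(L)$ under the hypothesis that $0$ is not in the spectrum of $L$. So the proof of the theorem amounts to: (i) $n_{EP}^u(\mu) = n^-(L)$, (ii) $n_{NSP}^u(\mu) = n^-(L)$, hence equality. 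The role of the hypothesis $M'(\mu)\neq 0$ is precisely to guarantee $0\notin \sigma(L)$ — that is, $L$ is invertible — since the kernel of $L$ is tied to the variation of the equilibrium along the mass-radius curve, and a critical point of $M$ is exactly where a zero mode appears. I would state this kernel-to-$M'$ correspondence carefully, citing the turning point analysis.

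Third, the technical heart, and the step I expect to be the main obstacle, is making the operator-theoretic setup rigorous: identifying the right energy space (a weighted Sobolev-type space adapted to the degeneracy of $\rho_0$ near $r=R_0$, governed by the exponent $\gamma_1$ in \eqref{assumption-P2}), showing $B$ is self-adjoint with compact resolvent relative to this space, showing the resolvent/semigroup for the viscous problem is well-behaved so that the unstable spectrum is discrete and finite, and verifying that the abstract KTC hypotheses (sectoriality, relative compactness of $B$ versus $L$, finiteness of $n^-(L)$) actually hold here. The boundary behavior at $r = R_0$ — where $\rho_0 \to 0$ and the coefficients degenerate — is delicate and is where the constraint $\gamma_1 \in (\tfrac65,2)$ must be used to get the needed compact embeddings and self-adjointness. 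Once these functional-analytic preliminaries are in place, the counting identity follows from the abstract theorem essentially formally.
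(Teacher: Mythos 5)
Your proposal follows essentially the same route as the paper: rewrite the linearized Navier--Stokes--Poisson system as the damped second-order equation $u_{tt}+D_{\mu}u_{t}+L_{\mu}u=0$, verify the functional-analytic hypotheses (self-adjointness, $n^{-}(L_{\mu})<\infty$, compactness adapted to the degenerate density near $r=R_{\mu}$), apply the infinite-dimensional Kelvin--Tait--Chetaev theorem to get $n_{NSP}^{u}(\mu)=n^{-}(L_{\mu})$, and combine with $n^{-}(L_{\mu})=n_{EP}^{u}(\mu)$ and the fact that $M^{\prime}(\mu)\neq0$ forces $\ker L_{\mu}=\{0\}$, both quoted from \cite{LZ2019} --- exactly the paper's argument (Sections 2--3). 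The only caveats are notational/organizational: the damping must act on $u_{t}$ (your display $\partial_{tt}u+\nu Bu+Lu=0$ drops the time derivative), and since the needed KTC theorem is not available in the literature, a complete proof must also establish it, which the paper does via an energy argument for $n^{u}\leq n^{-}(L)$ and a homotopy in the damping parameter with strong resolvent continuity for the reverse inequality.
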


By the characterization of $n_{EP}^{u}(\mu)$ given in \cite{LZ2019}, we have
the following turning point principle of the Navier-Stokes-Poisson system.

\begin{corollary}\label{ttpforvis}
\label{ttp} The linear stability of the non-rotating star $\left(  \rho_{\mu}(|\mathbf{x}|),\mathbf{0}\right)  $ of the Navier-Stokes-Poisson system \eqref{NSP} is fully determined by the
mass-radius curve parametrized by $\mu$. Here, the mass-radius curve is
oriented in a coordinate plane where the horizontal and vertical axes
correspond to the support radius and mass of the star respectively. For
$\mu>0$ small enough, we have
\begin{equation}
n_{NSP}^{u}\left(  \mu\right)  =%
\begin{cases}
1 & \text{when }\gamma_{1}\in\left(  \frac{6}{5},\frac{4}{3}\right), \\
0 & \text{ when }\gamma_{1}\in\left(  \frac{4}{3},2\right).
\end{cases}
\label{formula-unstable-mode-small-mu}%
\end{equation}
The number $n_{NSP}^{u}\left(  \mu\right)  \ $can only change at the mass extrema
(i.e. maxima or minima of $M_\mu  $). With the increase of $\mu$, at a
mass extrema point, $n_{NSP}^{u}\left(  \mu\right)  $ increases by $1$ if
$\frac{dM_\mu}{d\mu}\frac{dR_\mu}{d\mu}  $ changes from $-$ to $+$ (i.e.
the mass-radius curve bends counterclockwise) and $n_{NSP}^{u}\left(
\mu\right)  $ decreases by $1$ if $\frac{dM_\mu}{d\mu}\frac{dR_\mu}{d\mu}$ changes from $+$ to $-\ $(i.e. the mass-radius curve bends clockwise).
\end{corollary}
\begin{remark}
In particular, for the polytropic case $P\left(  \rho\right)  =K\rho^{\gamma}$, we get
$n_{NSP}^{u}\left(  \mu\right)  =1$ when $\gamma\in\left(  \frac{6}{5}%
,\frac{4}{3}\right)  $ and $n_{NSP}^{u}\left(  \mu\right)  =0$ when $\gamma
\in\left(  \frac{4}{3},2\right) $. It shows that the non-rotating viscous stars are  linearly stable for $\gamma
\in\left(  \frac{4}{3},2\right) $ and linearly unstable for $\gamma\in\left(  \frac{6}{5}%
,\frac{4}{3}\right)$.
\end{remark}
Intuitively, one might expect that the viscosity is stabilizing and an
unstable non-rotating star could become stable when the viscosity is added.
The above conclusions show that this is not the case. However, the spectra
besides the unstable eigenvalues are very different for the Navier-Stokes-Poisson
and the Euler-Poisson systems. For the linearized Euler-Poisson system, the eigenvalues on the real axis are discrete and symmetric about the imaginary axis. The rest of the spectra are on the  imaginary axis.  For the linearized Navier-Stokes-Poisson system, the unstable eigenvalues are on the positive real axis. The rest of the spectra are on the left half plane. This suggests decaying behaviors
on the stable space of finite co-dimension.


Here, we discuss some important ideas in the proof of  Theorem \ref{nEPeqnNSP}. We rewrite the linearized
Navier-Stokes-Poisson system (\ref{L-NSP}) as a second-order linear PDE with dissipation:
\begin{equation}
u_{tt}+D_{\mu}u_{t}+L_{\mu}u=0, \label{LNSP-2nd order}%
\end{equation}
where the operators $D_{\mu},\ L_{\mu}$ are defined by (\ref{defn-D-mu}) and
(\ref{defn-L-mu}). The damping operator $D_{\mu}$ is positive and the
operator $L_{\mu}$ is self-adjoint with  finite number of the negative modes.
Let $n^{-}\left(  L_{\mu}\right)  $ be the number of negative modes of $L_{\mu}$.
The condition $\frac{dM_\mu}{d\mu}  \neq0$ implies that $\ker L_{\mu
}=\left\{  0\right\}  $. We note that $n^{-}\left(  L_{\mu}\right)  $ is
exactly the number of unstable modes of the linearized Euler-Poisson
system. That is $n^{-}\left(  L_{\mu}\right)  =n_{EP}^{u}\left(  \mu\right)
$. To prove Theorem \ref{nEPeqnNSP}, it suffices to show $n_{NSP}^{u}\left(
\mu\right)  =n^{-}\left(  L_{\mu}\right)  $.  In order to achieve this, we  study a class of
abstract second-order linear equations with dissipation
\begin{equation}
u_{tt}+Du_{t}+Lu=0, \label{L-2nd-order}%
\end{equation}
where the operators $D$ and $L$ satisfy assumptions (A1)-(A7) in Section \ref{s2}.
Roughly speaking, we assume that $D$ and $L$ are self-adjoint operators with
$D>0,\ker (L)=\left\{  0\right\},n^{-}\left(  L\right)  <\infty$ and a
certain compact embedding property of the space with the graph norm of $L$.
Under those assumptions, we can prove that $n^{u}=n^{-}(L)$, where $n^{u}$
denotes the total algebraic multiplicity of unstable eigenvalues of (\ref{L-2nd-order}). An unstable eigenvalue
$\lambda\in\mathbb{C}$ of (\ref{L-2nd-order}) is such that
$\operatorname{Re}\lambda>0$ and $\ker\left(  \lambda^{2}I+\lambda D+L\right)  \neq\left\{  0\right\}$. For the
finite-dimensional case (i.e. $D,\ L$ are matrices), this type of results is
usually called Kelvin-Tait-Chetaev Theorem in
the literature \cite{CSJ2002,KON2013,ZEE1964}. Results in the infinite-dimensional cases under various assumptions were also obtained
in
\cite{Miloslavskij1991,Pivovarchik1989}. However, these results cannot be applied to our current model. In Section \ref{s2}, we establish
a new infinite-dimensional Kelvin-Tait-Chetaev  Theorem (see Theorem
\ref{thm: KTC}) which is of independent interest. In Section \ref{s3}, we apply
Theorem \ref{thm: KTC} to
(\ref{LNSP-2nd order}) and obtain $n^{-}\left(  L_{\mu}\right)  =n_{NSP}%
^{u}\left(  \mu\right)  $.

In \cite{LTXZZH2016,LTXZZH20162}, the nonlinear asymptotic stability of the non-rotating viscous stars was
proved for the polytropic case $P\left(  \rho\right)  =K\rho^{\gamma}$ with
$\gamma\in\left(  \frac{4}{3},2\right)  $. Recently, in \cite{LTWYZH}, the
nonlinear asymptotic stability of the non-rotating stars was proved for a more
general polytrope $P\left(  \rho\right)  =K\rho^{\gamma}$ with $\gamma
>\frac{4}{3}$ and for a class of equations of states including the white dwarf stars. In
Theorem \ref{thm:asymptotic sta}, for a general equation of states satisfying
(P1) and (P2), we prove  nonlinear asymptotic
stability of the non-rotating viscous stars satisfying the sharp linear stability
condition $\frac{dM_\mu}{d\mu}  \neq0, n^{-}\left(  L_{\mu
}\right)  =0$. In particular, this implies the asymptotic stability of any
non-rotating white dwarf star  and non-rotating
star with center density up to the first mass maximum for a general equation of
states. Comparing with the equation of states in \cite{LTWYZH}, our assumptions are general. We only need more regularity of the pressure in addition and  no more assumptions for the center density of stars. The sharp linear stability condition in this paper is essentially different from the assumptions in \cite{LTWYZH}. Especially, the condition (1.9) in \cite{LTWYZH} is not obvious to verify.
For the proof, we adopt the ideas in \cite{LTXZZH20162}. However,
there is one significant difference. In \cite{LTWYZH} and  \cite{LTXZZH20162},
the pointwise behaviors of the equation of states are utilized to get the
positivity of leading order terms in the energy estimates. By contrast, we use
the positivity of the quadratic form $\langle L_{\mu
}u,u\rangle_{X_\mu}$, which is equivalent to the sharp stability condition, to control the higher order terms. For the
polytropic case $P\left(  \rho\right)  =K\rho^{\gamma}$ with $\gamma\in\left(
\frac{6}{5},\frac{4}{3}\right)  $, the nonlinear instability of the Lane-Emden
stars was shown in \cite{JJTI2013}. Similarly, we are able to prove the nonlinear
instability  of linearly unstable non-rotating viscous stars (i.e. $\frac{dM_\mu}{d\mu}  \neq0,n^{-}\left(  L_{\mu
}\right)  >0$) with a general equation of states. However,  the results in our paper show that  the number
of unstable modes is one for the polytropic case which is not proved in \cite{JJTI2013}. The turning
point principle is also true for the nonlinear stability of the non-rotating viscous  stars.

This paper is organized as follows. In Section \ref{s2}, we prove the
Kelvin-Tait-Chetaev Theorem for a class of abstract second-order equations with dissipation.
The work in Section \ref{s3} is to prove the turning point principle of  non-rotating
viscous stars by using the abstract theory in Section \ref{s2}. In Section
\ref{Nonlinearstability}, we obtain the nonlinear asymptotic stability of the
non-rotating viscous stars when $n^{-}\left(  L_{\mu
}\right)  =0$ and
$\frac{dM_\mu}{d\mu}\neq0$. In Section \ref{Nonlinearinstability}, the
nonlinear instability of the non-rotating viscous stars is proved when
$n^{-}\left(  L_{\mu
}\right)  >0$ and
$\frac{dM_\mu}{d\mu}\neq0$.

\textbf{Notations.}
We use $X \lesssim Y$ when $X \le CY$ for some constant $C> 0$. If the constant $C$ depends on $a,b,\ldots$, we set $X \lesssim_{a,b,\ldots} Y$.  $X \sim Y$ is used to denote $X \lesssim Y \lesssim X$. Further, $C,C',\ldots$ denote positive constants which may vary in different estimates.

\section{Kelvin-Tait-Chetaev Theorem in infinite dimension}\label{s2}

Before considering the linear stability of the gaseous stars modeled by the Navier-Stokes-Poisson system, we recall an old dynamical theorem first asserted by Kelvin and Tait and proved by Chetaev. The Kelvin-Tait-Chetaev Theorem is widely applied in the study of satellite attitude dynamics. We refer to \cite{ZEE1964} for details of the Kelvin-Tait-Chetaev Theorem and extensions in finite dimensions. In this section, we give the Kelvin-Tait-Chetaev Theorem for the infinite-dimensional case.
Consider the following equation:
\begin{align}\label{2.1}
u_{tt}+ D u_t+Lu=0.
\end{align}
We treat \eqref{2.1} as a first-order problem  on a Banach space $\mathfrak{X}$.
Let $v=u_t$ and  $\vec{u}=(u,v)^{T}$. Then, \eqref{2.1} is rewritten as
\begin{align}\label{2.2}
\vec{u}_t&=A\vec{u}(t),\\
A&=\left(
    \begin{array}{cc}
      0 & I \\
      -L & -D \\
    \end{array}
  \right)\nonumber
\end{align}
with initial data $\vec{u}|_{t=0}=(u_0,v_0)^T\in \mathfrak{X}$.

We assume that\\
(A1) $X$ is a Hilbert space with real inner product $\langle\cdot,\cdot\rangle_X$. Let $\mathfrak{X}=X^{D}_1\times X$, $\textrm{Dom}(L)\subset X$, $\textrm{Dom}(D)\subset X$, $\textrm{Dom}(A):=(\mathrm{Dom}(L)\cap \mathrm{Dom}(D))\times \mathrm{Dom}(D)$. Here, $X_1^D$ is the first Sobolev space with respect to the closed linear operator $D$:
\begin{align*}
X_1^D:=\textrm{Dom}(D),\|\cdot\|_{X_1^D}:=\|D(\cdot)\|_X.
\end{align*}
(A2) $D$ and $L$ are  closed and densely defined linear operators.\\
(A3) $D$ is a  positive definite operator, i.e., $\langle D u, u\rangle_X\geq C \langle  u, u\rangle_X$ for some constant $C>0$.\\
(A4) $L$ is a self-adjoint operator satisfying $\textrm{ker}(L)=\{0\}$. There exists a decomposition of $X$ such that $X=X_-\oplus X_+$ satisfying
\begin{align*}
n^{-}(L):=\textrm{dim}(X_-)<\infty,\ L|_{X_-\backslash \{0\}}<0,\ L|_{X_+\backslash \{0\}}\geq\delta>0,
\end{align*}
where $\delta$ is a positive constant.\\
(A5) $-D$ generates a strongly continuous semigroup on $X$.\\
(A6) $L$ is $D$-bounded. It implies that $\textrm{Dom}(D)\subseteq\textrm{Dom}(L)$ and $L\in\mathcal{L}(X^D_1,X)$. \\
(A7) There exists $m>0$ such that $\langle (L+m)u,u\rangle_X>0$, for all $0\neq u\in\textrm{Dom}(L)$. If the sequence $\{u_n\}_{n=1}^\infty$ satisfies $\langle (L+m)u_n,u_n\rangle_X\leq C$ for some positive constant $C$, then there exist $\bar{u}\in X$ and a subsequence of $\{u_n\}_{n=1}^\infty$ which is denoted by itself such that $u_n\rightarrow \bar{u}$ in $X$ as $n\rightarrow\infty$.
It means that the imbedding $Z\hookrightarrow X$ is compact where $Z$ is a Banach space with norm $\|\cdot\|_Z:=\langle (L+m)^\frac{1}{2}\cdot,(L+m)^\frac{1}{2}\cdot\rangle_X$.

By Corollary 3.4 of \cite{EN2000}, \eqref{2.2} is well-posed if and only if the closed
operator $A:\textrm{Dom}(A)=(\mathrm{Dom}(L)\cap \mathrm{Dom}(D))\times \mathrm{Dom}(D)\subset\mathfrak{X}\rightarrow\mathfrak{X} $ generates a strongly continuous semigroup $\{S(t)\}_{t\geq 0}$ on $\mathfrak{X}$. Moreover, if the initial value $(u_0,v_0)^T$ belonging to $\mathrm{Dom}(A)$ and $\vec{u}=(u,v)^T$ is the unique classical solution of \eqref{2.2}, then the first component $u$ of $\vec{u}$ is the unique classical solution of \eqref{2.1} with the initial data $u(0)=u_0$ and $u_t(0)=v_0$.  Therefore, by the assumptions (A5)-(A6) and Corollary 3.4  of \cite{EN2000}, we  have

\begin{lemma}\label{lem2.2}
Under the assumptions (A1), (A2), (A5) and (A6), the second-order Cauchy problem \eqref{2.1} with the initial data $u(0)=u_0$ and $u_t(0)=v_0$ is well-posed.
\end{lemma}

For $\tau\in[0,1],$ define
\begin{align*}
A_\tau=\left(
    \begin{array}{cc}
      0 & I \\
      -L & -\tau D \\
    \end{array}
  \right).
\end{align*}
\begin{lemma}\label{lem2.22}
For every $\tau\in [0,\infty)$, $\sigma_{ess}(A_\tau)$ (the essential spectrum of $A_\tau$) satisfies $\sigma_{ess}(A_\tau)\subset\{\lambda\in\mathbb{C}|\Re\lambda\leq0\}$. Moreover, any $\lambda_\tau\in \sigma(A_\tau)\cap \{\lambda\in\mathbb{C}|\Re\lambda > 0\}$ is a discrete spectrum satisfying
$\Im\lambda_{\tau}=0$ and there exists an constant $m_0>0$ such that $m_0<\lambda_\tau<\sqrt{m}$ on $\tau\in [0,\tau']$ for any $\tau'> 0.$
\end{lemma}
\begin{proof}
By the assumption (A4), define the projection $\Pi_+:X\mapsto X_+$.
Let $L_1=\Pi_+L\Pi_+$ and $L_2=L-L_1$. We have $\textrm{dim}(\textrm{Dom}(L_2))<\infty$ and $\textrm{dim}(\textrm{Ran}(L_2))<\infty$. Let $D_+=\Pi_+D\Pi_+$ and
\begin{align*}
A_+=\left(
    \begin{array}{cc}
      0 & \Pi_+ \\
      -L_1 & -\tau D_+ \\
    \end{array}
  \right).
\end{align*}
For any given $\Re\lambda>0$, there holds
\begin{align*}
&\left\langle (\lambda I-A_+)\begin{pmatrix} u\\v
\end{pmatrix},\begin{pmatrix} u\\v
\end{pmatrix}\right\rangle_{X\times X}\\
=&\langle\lambda u,u\rangle_X+\langle\lambda v,v\rangle_X+\langle\tau D_+ v_+,v_+\rangle_X-\langle v_+,u_+\rangle_X+\langle L_1u_+,v_+\rangle_X,
\end{align*}
where $u_+=\Pi_+u,v_+=\Pi_+v$.
Without loss of generality, we assume that $-\langle v_+,u_+\rangle_X+\langle L_1u_+,v_+\rangle_X\geq 0$. Otherwise, we consider
\begin{align*}
\left\langle \left(
    \begin{array}{cc}
      \frac{I}{C} &, 0 \\
      0 &, I \\
    \end{array}
  \right)(\lambda I-A_+)\begin{pmatrix} u\\v
\end{pmatrix},\begin{pmatrix} u\\v
\end{pmatrix}\right\rangle_{X\times X}
\end{align*}
for $C>0$ large enough. Thus, we have
\begin{align*}
&\left\| (\lambda I-A_+)\begin{pmatrix} u\\v
\end{pmatrix}\right\|_{X\times X}
\geq \Re\lambda \left\| \begin{pmatrix} u\\v
\end{pmatrix}\right\|_{X\times X}.
\end{align*}
It implies that $\lambda\in \rho(A_+)$, where $\rho(A_+)$ is the resolvent set of $A_+$.

Note that  $\textrm{dim}\left(\textrm{Dom}(A_\tau-A_+)\right)<\infty$ and $\textrm{dim}\left(\textrm{Ran}(A_\tau-A_+)\right)<\infty$, therefore, $\sigma_{ess}(A_\tau)=\sigma_{ess}(A_+)\subset\left\{\lambda\in\mathbb{C}|\Re\lambda\leq 0\right\}$.

By the assumption (A4), the unstable eigenvalues of $A_0$ are on the real axis and away from $0$.
 One can obtain that  the unstable eigenvalues $\lambda_\tau$ of $A_\tau$ with $\tau>0$ are on the real axis. Indeed,  let $u_{\tau}\neq 0$ be the first component of  an eigenfunction of $A_{\tau}$
with respect to the unstable eigenvalue $\lambda_{\tau}$ satisfying
\begin{align*}
  (A-\lambda_{\tau}I)\vec{u}_1=0.
\end{align*}
Note that
\begin{align*}
2\Re \lambda_{\tau}\Im \lambda_{\tau}\langle u_{\tau},u_{\tau}\rangle_X+\tau \Im \lambda_{\tau}\langle Du_{\tau},u_{\tau}\rangle_X=0.
\end{align*}
 Thus, it obtains that $\Im\lambda_{\tau}=0.$

Moreover, for any $\tau_1\in[0,\tau_0]$, we choose an sequence  $\{\tau_n\}_{n=1}^\infty$ such that $\tau_n>0,$ $\lim_{n\rightarrow\infty}\tau_n=\tau_1$ and $\lim_{n\rightarrow\infty}\lambda_{\tau_n}=0$, where $\lambda_{\tau_n}$ is the unstable eigenvalue of $A_{\tau_n}$. Let $u_{\tau_n}$ be the first component of an  eigenfunction of $A_{\tau_n}$ with respect to the unstable eigenvalue $\lambda_{\tau_n}$ with  $\|u_{\tau_n}\|_X=1$. There holds
\begin{align*}
(\lambda^2_{\tau_n}+m)\langle u_{\tau_n},u_{\tau_n}\rangle_X+\tau_n\lambda_{\tau_n}\langle Du_{\tau_n},u_{\tau_n}\rangle_X+\langle (L+m)u_{\tau_n},u_{\tau_n}\rangle_X=2m.
\end{align*}
Hence, $\lambda^2_{\tau_n}+m\leq 2m$ by the assumption (A3) and (A7). Furthermore, by $\langle (L+m)u_{\tau_n},u_{\tau_n}\rangle_X\leq 2m$ and the assumption (A7), there exists $\bar{u}\neq0$  such that
\begin{align*}
 u_{\tau_n}\rightarrow\bar{u}\ \textrm{in}\ X,\ \textrm{as}\ n\rightarrow \infty.
\end{align*}
Note that
\begin{align}\label{3}
\lambda^2_{\tau_n}\langle u_{\tau_n},\phi\rangle_X+\tau_n\lambda_{\tau_n}\langle Du_{\tau_n},\phi\rangle_X+\langle Lu_{\tau_n},\phi\rangle_X=0,\ \forall \phi\in \text{Dom}(D).
\end{align}
Taking $n\rightarrow\infty$ in \eqref{3}, we have $\langle \bar{u},L\phi\rangle_X=0$ which yields $\bar{u}\in \overline{\text{Ran}(L)}^\perp=\text{ker}(L)$.
It is a contradiction to $\textrm{ker}(L)=\{0\}$. It obtains that the unstable eigenvalues of $A_{\tau_n}$ can't tend to zero as $n\rightarrow \infty$.
\end{proof}

Let  $n^u$ be the total algebraic multiplicity of eigenvalues of $A$ on the right half plane, that is, the dimension of the  unstable
space of \eqref{2.1}. First, we have

\begin{lemma}\label{lem2.3}
$n^u\leq n^-(L)$.
\end{lemma}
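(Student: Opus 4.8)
The plan is to show that the eigenfunctions of $A$ corresponding to eigenvalues with $\Re\lambda > 0$ span a subspace on which the quadratic form $\langle L\cdot,\cdot\rangle_X$ is negative definite, which immediately forces the number of such eigenvalues (counted with algebraic multiplicity) to be at most $\dim X_- = n^-(L)$. First I would observe that $\lambda$ is an eigenvalue of $A$ with $\Re\lambda>0$ if and only if there is $u\in\mathrm{Dom}(L)\cap\mathrm{Dom}(D)$, $u\neq 0$, with $(\lambda^2 I + \lambda D + L)u = 0$; the eigenvector of $A$ is then $(u,\lambda u)^T$. Pairing this quadratic pencil equation with $u$ in $X$ gives
\begin{align*}
\lambda^2 \|u\|_X^2 + \lambda \langle Du,u\rangle_X + \langle Lu,u\rangle_X = 0.
\end{align*}
Writing $\lambda = a + ib$ with $a>0$ and separating real and imaginary parts, the imaginary part reads $2ab\|u\|_X^2 + b\langle Du,u\rangle_X = 0$, i.e. $b(2a\|u\|_X^2 + \langle Du,u\rangle_X) = 0$; since $a>0$ and $D$ is positive definite (A3), the factor in parentheses is strictly positive, forcing $b=0$. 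Hence every eigenvalue of $A$ with positive real part is in fact real and positive, and for the corresponding real eigenfunction $u$ the real part of the identity becomes $a^2\|u\|_X^2 + a\langle Du,u\rangle_X + \langle Lu,u\rangle_X = 0$, so $\langle Lu,u\rangle_X = -a^2\|u\|_X^2 - a\langle Du,u\rangle_X < 0$. Thus each eigenvector of a positive eigenvalue lies in the cone where $\langle L\cdot,\cdot\rangle_X$ is negative.

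Next I would handle algebraic multiplicity and distinct eigenvalues together. Let $\lambda_1 > \cdots > \lambda_k > 0$ be the distinct positive eigenvalues and let $W\subset X$ be the (finite-dimensional) span of all generalized eigenfunctions of the pencil $\lambda^2 I + \lambda D + L$ associated with these $\lambda_j$'s; by the essential-spectrum lemma already proved, $\sigma_{ess}(A)\subset\{\Re\lambda\le 0\}$, so each $\lambda_j$ is isolated with finite algebraic multiplicity and $\dim W = n^u < \infty$. The claim I would establish is that $\langle Lw,w\rangle_X < 0$ for every nonzero $w\in W$; granting this, $W$ is a negative subspace for $L$, hence $n^u = \dim W \le \dim X_- = n^-(L)$ by the maximality of $X_-$, and we are done. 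To prove negativity on $W$ I would use a standard symmetrization/quadratic-form argument for damped pencils: for a generalized eigenvector chain one derives, by pairing the Jordan-chain relations against the chain and using self-adjointness of $L$ and $D$ together with $a>0$ and $D>0$, that the block form of $\langle L\cdot,\cdot\rangle_X$ restricted to each chain is negative definite, and eigenvectors for distinct real $\lambda_j$ can be arranged so that the total form stays negative definite (one can alternatively invoke that on $\overline{\mathrm{span}}$ of positive-eigenvalue eigenfunctions the operator $A$ restricts to a finite-dimensional damped system with no purely imaginary or zero modes, where the classical finite-dimensional Kelvin-Tait-Chetaev count applies).

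The main obstacle I expect is the algebraic-multiplicity bookkeeping for nontrivial Jordan chains of the operator pencil: the clean computation above handles geometric eigenvectors, but if some $\lambda_j$ has a generalized eigenvector $u^{(1)}$ with $(\lambda_j^2 + \lambda_j D + L)u^{(1)} = -(2\lambda_j + D)u^{(0)}$, one must show the associated $2\times 2$ form $\bigl(\langle L u^{(i)}, u^{(l)}\rangle_X\bigr)$ is still negative. This requires carefully using $\Re\lambda_j = \lambda_j > 0$ and the positivity of $2\lambda_j I + D$ to sign the cross terms; the point is precisely that the damping $D>0$ rules out the degeneracies that would otherwise allow a generalized eigenvector to escape the negative cone. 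I would isolate this as a short sublemma about the quadratic pencil $P(\lambda) = \lambda^2 I + \lambda D + L$ with $D>0$ self-adjoint and $L$ self-adjoint, asserting that the span of all generalized eigenspaces for real positive $\lambda$ is $L$-negative, and prove it by the energy identity for the chain relations; everything else is then routine. A subtle technical point is that the generalized eigenfunctions a priori lie only in $\mathrm{Dom}(A)$, but assumption (A6) ($L$ is $D$-bounded, $L\in\mathcal L(X_1^D,X)$) guarantees $\mathrm{Dom}(L)\cap\mathrm{Dom}(D) = \mathrm{Dom}(D) = X_1^D$ and that all pairings above are legitimate.
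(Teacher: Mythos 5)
Your reduction is the right one, but the step that carries the entire content of the lemma is left unproven. After showing that unstable eigenvalues are real and that a genuine eigenvector $u$ of the pencil satisfies $\langle Lu,u\rangle_X<0$, everything hinges on your claimed sublemma that the span $W$ of all generalized eigenfunctions over the positive eigenvalues is $L$-negative and has dimension $n^u$. You yourself flag this as the main obstacle and offer only two gestures: (i) that pairing the Jordan-chain relations gives negativity of each block and that eigenvectors of distinct $\lambda_j$ ``can be arranged so that the total form stays negative definite'' --- but the cross terms between distinct eigenvalues and the chain contributions are exactly what must be estimated, and no computation is given; moreover the asserted identity $\dim W=n^u$ is not automatic for operator pencils (chain vectors of a quadratic pencil need not span a space of dimension equal to the algebraic multiplicity), so even granting negativity on $W$ the counting is incomplete; and (ii) that one can invoke the finite-dimensional Kelvin--Tait--Chetaev theorem on the restriction of $A$ to its unstable spectral subspace --- this does not apply as stated, since that invariant subspace of $\mathfrak{X}$ is not of the form $Y\times Y$ and the restricted operator is not the companion form of a second-order damped system with symmetric coefficients, so there is no pair $(D,L)$ to which the classical count could be applied. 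As written, the proposal proves the lemma only when the unstable spectrum consists of simple, well-separated eigenvectors.

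The paper closes exactly this gap by a dynamical argument you did not use, which treats chains, repeated eigenvalues and cross terms in one stroke: for any initial datum in the unstable generalized eigenspace of $A$, the solution of $u_{tt}+Du_t+Lu=0$ is a polynomial in $t$ times $e^{\lambda_i t}$ with $\Re\lambda_i>0$, hence $u,u_t\to 0$ as $t\to-\infty$; the Lyapunov identity $\frac{1}{2}\frac{d}{dt}\bigl(\langle u_t,u_t\rangle_X+\langle Lu,u\rangle_X\bigr)=-\langle Du_t,u_t\rangle_X<0$ integrated over $(-\infty,0]$ then gives $\langle Lu(0),u(0)\rangle_X<0$ and $\langle Lu_t(0),u_t(0)\rangle_X<0$, producing an $L$-negative subspace of dimension at least $n^u$. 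If you prefer to stay algebraic, your pencil pairing can in fact be completed, but the computations must appear: pairing the chain relation $(\lambda_j^2+\lambda_jD+L)u^{(1)}=-(2\lambda_j+D)u^{(0)}$ with $u^{(0)}$ and using symmetry of $D$ and $L$ forces $\langle(2\lambda_j+D)u^{(0)},u^{(0)}\rangle_X=0$, which is impossible since $D>0$ and $\lambda_j>0$, so nontrivial Jordan chains at positive real eigenvalues do not exist; and for eigenvectors at distinct $\lambda_i\neq\lambda_j$ the pencil identities give $\langle Lu_i,u_j\rangle_X=\lambda_i\lambda_j\langle u_i,u_j\rangle_X$, after which negative definiteness of the resulting Gram-type form follows from Cauchy--Schwarz and $D>0$. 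Either route must be carried out explicitly; without one of them the lemma is asserted rather than proved.
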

\begin{proof}
Let $\lambda_i(i=1,\ldots,M)$  be the distinct eigenvalues of $A$ on the right half plane with algebraic multiplicity $n_i<\infty$ and $\sum_{i=1}^M n_i=n^u.$

For $1\leq i\leq M$, let $\vec{h}_{ij},$ $j=1,\ldots,n_i$ be linearly independent functions such that
\begin{align*}
(A-\lambda_i I)^{n_i}\vec{h}_{ij}=0.
\end{align*}
Hence, the Cauchy problem \eqref{2.2} with the initial data $\vec{u}(0)=\vec{h}_{ij}$ has the solution
\begin{align}\label{ex}
\vec{u}_{ij}(t)=\vec{P}_{ij}(t)e^{\lambda_it},
\end{align}
where $\vec{P}_{ij}(t)$ is the polynomial of $t$ with the degree less than $n_i$ and $\vec{P}_{ij}(0)=\vec{h}_{ij}$.
Since  $\vec{h}_{ij},i=1,\ldots,M,j=1,\ldots,n_i$ are linearly independent,  we have that the first components $u_{ij}(t):=P_{ij}(t)e^{\lambda_it}$ of  $\vec{u}_{ij}(t)$, which are the solutions of \eqref{2.1}, are linearly independent.

Then, by \eqref{2.1}, we have
\begin{align*}
\frac{1}{2}\frac{d}{dt}\left\langle \frac{du_{ij}}{dt},\frac{du_{ij}}{dt}\right\rangle_X+\left\langle D \frac{du_{ij}}{dt},\frac{du_{ij}}{dt}\right\rangle_X+\frac{1}{2}\frac{d}{dt}\left\langle Lu_{ij},u_{ij}\right\rangle_X =0.
\end{align*}
By the assumption (A3), there holds
\begin{align}\label{2.12}
\frac{1}{2}\frac{d}{dt}\left(\left\langle \frac{du_{ij}}{dt},\frac{du_{ij}}{dt}\right\rangle_X+\langle Lu_{ij},u_{ij}\rangle_X\right) =-\left\langle D \frac{du_{ij}}{dt},\frac{du_{ij}}{dt}\right\rangle_X<0.
\end{align}
By $\Re\lambda_i>0$ and \eqref{ex}, we easily have
\begin{align*}
\left\langle \frac{du_{ij}}{dt},\frac{du_{ij}}{dt}\right\rangle_X+\langle Lu_{ij},u_{ij}\rangle_X\rightarrow 0,\ \text{as}\ t\rightarrow -\infty.
\end{align*}
Integrating \eqref{2.12} from $-\infty$ to $0$, we obtain
\begin{align}\label{contr}
\frac{1}{2}\left(\left\langle \frac{du_{ij}}{dt}(0),\frac{du_{ij}}{dt}(0)\right\rangle_X+\langle Lu_{ij}(0),u_{ij}(0)\rangle_X\right) =-\int_{-\infty}^{0}\left\langle D \frac{du_{ij}}{dt},\frac{du_{ij}}{dt}\right\rangle_X dt<0.
\end{align}
This implies that
\begin{align*}
\langle Lu_{ij}(0),u_{ij}(0)\rangle_X <0.
\end{align*}
It should be noted that $u_{ij}(0),i=1,\ldots,M,j=1,\ldots,n_i$ are not linearly dependent. Indeed, if there exists some constants $C_{ij}$ such that $\sum_{i=1}^M\sum_{j=1}^{n_i}C_{ij}u_{ij}(0)=0$, we can replace $u_{ij}$ by $\sum_{i=1}^M\sum_{j=1}^{n_i}C_{ij}u_{ij}$ in the above argument. By \eqref{contr}, it is a contradiction to  $u_{ij},i=1,\ldots,M,j=1,\ldots,n_i$  are linearly independent.  Then, we have
\begin{align*}
L\big|_{\textrm{span}\{u_{ij}(0),i=1,\ldots,M,j=1,\ldots,n_i\}\setminus\{0\}}<0,
\end{align*}
which completes the proof.
\end{proof}

We use a homotopy method to prove $n^u\geq n^-(L)$ which implies that $n^u= n^-(L)$. 
Denote the resolvent of $A_\tau$ by
\begin{align*}
R_{\lambda,\tau}:=(A_\tau-\lambda I)^{-1},\ \lambda\in\rho(A_\tau),
\end{align*}
where $\rho(A_\tau)$ is the resolvent set of $A_\tau$. Let $R_{\lambda,\tau}^*$ be the conjugate operator of $R_{\lambda,\tau}$.

\begin{lemma}\label{lem2.4}
$R_{\lambda,\tau}$ and $R_{\lambda,\tau}^*$ are strongly continuous with respect to $\tau\in [0,1]$.
\end{lemma}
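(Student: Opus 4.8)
The plan is to prove the slightly stronger statement that $\tau\mapsto R_{\lambda,\tau}$ is continuous in the operator norm of $\mathcal{L}(\mathfrak{X})$ on the relatively open set $J:=\{\tau\in[0,1]:\lambda\in\rho(A_{\tau})\}$; since $\|T^{*}\|=\|T\|$ on a Hilbert space, this yields at once norm (hence strong) continuity of both $R_{\lambda,\tau}$ and $R_{\lambda,\tau}^{*}$, which is the point of separately asserting it for the adjoint (strong convergence alone does not pass to adjoints). The setup rests on two elementary observations: by (A6) all the operators $A_{\tau}$ share the common domain $\mathrm{Dom}(A_{\tau})=\mathrm{Dom}(D)\times\mathrm{Dom}(D)$, and $A_{\tau}-A_{\tau_{0}}=(\tau_{0}-\tau)\mathbf{B}$, where $\mathbf{B}:\mathfrak{X}\to\mathfrak{X}$ is the operator $(u,v)^{T}\mapsto(0,Dv)^{T}$ with domain $X_{1}^{D}\times\mathrm{Dom}(D)\supseteq\mathrm{Dom}(A_{\tau})$.

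The one genuinely non-routine step is to show that, although $\mathbf{B}$ is unbounded, the composition $\mathbf{B}R_{\lambda,\tau_{0}}$ belongs to $\mathcal{L}(\mathfrak{X})$ for every $\tau_{0}\in J$. I would obtain this from the closed graph theorem: $R_{\lambda,\tau_{0}}$ maps $\mathfrak{X}$ boundedly \emph{onto} $\mathrm{Dom}(A_{\tau_{0}})\subseteq\mathrm{Dom}(\mathbf{B})$, so $\mathbf{B}R_{\lambda,\tau_{0}}$ is everywhere defined on $\mathfrak{X}$; by (A2), $D$ is closed, hence $\mathbf{B}$ is closed, and therefore so is the composition of the bounded operator $R_{\lambda,\tau_{0}}$ with the closed operator $\mathbf{B}$; an everywhere-defined closed operator on a Banach space is bounded. (Positive definiteness (A3), giving $\|u\|_{X}\le C^{-1}\|Du\|_{X}$, is what makes $X_{1}^{D}$, and hence $\mathfrak{X}$, a Banach space in the first place.) This is exactly where the choice of state space $\mathfrak{X}=X_{1}^{D}\times X$ together with (A2), (A3), (A6) is used to render the unbounded perturbation $D$ harmless.

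With this in hand the remainder is a standard Neumann-series perturbation. Fix $\tau_{0}\in J$. On the common domain one has the factorization
\[
A_{\tau}-\lambda I=\big(I+(\tau_{0}-\tau)\mathbf{B}R_{\lambda,\tau_{0}}\big)(A_{\tau_{0}}-\lambda I),
\]
so for $|\tau-\tau_{0}|<\|\mathbf{B}R_{\lambda,\tau_{0}}\|^{-1}$ the bracketed factor is invertible in $\mathcal{L}(\mathfrak{X})$, whence $\tau\in J$ and $R_{\lambda,\tau}=R_{\lambda,\tau_{0}}\big(I+(\tau_{0}-\tau)\mathbf{B}R_{\lambda,\tau_{0}}\big)^{-1}$. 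A geometric-series estimate then shows $R_{\lambda,\tau}\to R_{\lambda,\tau_{0}}$ in $\mathcal{L}(\mathfrak{X})$ as $\tau\to\tau_{0}$, so $J$ is open and $R_{\lambda,\cdot}$ is norm continuous on $J$. Equivalently, the second resolvent identity $R_{\lambda,\tau}-R_{\lambda,\tau'}=(\tau-\tau')R_{\lambda,\tau}\mathbf{B}R_{\lambda,\tau'}$ combined with the local boundedness of $\|R_{\lambda,\tau}\|$ just obtained gives a Lipschitz bound $\|R_{\lambda,\tau}-R_{\lambda,\tau'}\|\le C_{\tau_{0}}|\tau-\tau'|$ near $\tau_{0}$; taking adjoints in this identity and using $\|T^{*}\|=\|T\|$ transfers the same bound to $R_{\lambda,\tau}^{*}$.

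I expect the boundedness of $\mathbf{B}R_{\lambda,\tau_{0}}$ to be the only real obstacle; everything after it is resolvent bookkeeping. Two minor points would be worth spelling out in the write-up: first, for a reader who prefers to avoid the closed graph step and only wants the strong continuity asserted in the lemma, it suffices to subtract the defining equations $(A_{\tau}-\lambda I)R_{\lambda,\tau}x=x=(A_{\tau_{0}}-\lambda I)R_{\lambda,\tau_{0}}x$ to get $R_{\lambda,\tau}x-R_{\lambda,\tau_{0}}x=(\tau-\tau_{0})\,R_{\lambda,\tau}\mathbf{B}R_{\lambda,\tau_{0}}x$ for each \emph{fixed} $x\in\mathfrak{X}$, where $\mathbf{B}R_{\lambda,\tau_{0}}x$ is a single fixed vector of $\mathfrak{X}$; second, for the homotopy argument of the following sections one wants these bounds uniform in $\tau$ on a fixed contour $\Gamma$ lying in $\bigcap_{\tau\in[0,1]}\rho(A_{\tau})$ and enclosing the unstable part of each $\sigma(A_{\tau})$, which then follows from compactness of $[0,1]$ together with the norm continuity just established.
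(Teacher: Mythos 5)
Your proposal is correct, but it takes a genuinely different route from the paper. The paper proves only strong continuity and does so in two cases: for $\tau_0\in(0,1]$ it applies the second resolvent identity $R_{\lambda,\tau_n}-R_{\lambda,\tau_0}=R_{\lambda,\tau_n}(A_{\tau_0}-A_{\tau_n})R_{\lambda,\tau_0}$ to a fixed vector, invoking (without proof) a bound on $\|R_{\lambda,\tau_n}\|$ uniform in $n$; for the delicate endpoint $\tau_0=0$ it abandons this, reduces to the quadratic pencil $(\lambda^{2}+\tau_n\lambda D+L)v_{1,n}=-g-(\lambda+\tau_n D)f$ in $X$, and uses the compactness assumption (A7) twice — first, via a contradiction argument, to bound $(\lambda^{2}+\tau\lambda D+L)^{-1}$ uniformly, then to upgrade $v_{1,n}\rightharpoonup u_1$ to strong convergence — and even then the convergence it obtains is in the $X\times X$ norm. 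You instead exploit the structure of the state space: since $\mathfrak{X}=X_1^D\times X$, every element of $\mathrm{Dom}(A_\tau)$, including $\tau=0$, has second component in $\mathrm{Dom}(D)$, so $\mathrm{Ran}(R_{\lambda,\tau_0})\subseteq\mathrm{Dom}(\mathbf{B})$, and the closed graph theorem (with (A2), (A3) making $X_1^D$, hence $\mathfrak{X}$, complete and $\mathbf{B}$ closed) gives $\mathbf{B}R_{\lambda,\tau_0}\in\mathcal{L}(\mathfrak{X})$; the Neumann series then yields norm continuity of $\tau\mapsto R_{\lambda,\tau}$ at every $\tau_0$ with $\lambda\in\rho(A_{\tau_0})$, with no case distinction at $\tau_0=0$, no appeal to (A7), no unproved uniform resolvent bound, and with $R_{\lambda,\tau}^{*}$ handled for free because norm convergence passes to adjoints (the paper's treatment of the adjoint is essentially implicit). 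What your route gives up is incidental: the paper's Case 2 produces, as a by-product, the compactness-based control of the pencil $(\lambda^{2}+\tau\lambda D+L)^{-1}$ in $X$ that reappears in the proof of Theorem \ref{thm: KTC}, and its $X\times X$ statement is closer to the topology in which the authors manipulate the Riesz projections; your norm continuity on $\mathfrak{X}$, together with your closing remark on uniformity of the bounds over the contour $\Gamma$, is stronger and amply sufficient for the homotopy argument.
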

\begin{proof}
For $\tau_0\in [0,1]$, let $\{\tau_n\}_{n=1}^\infty\subset [0,1]$ be any sequence satisfying $\lim_{n\rightarrow\infty}\tau_n=\tau_0$.
For any fixed $(u,v)^T\in \textrm{Dom}(A)\subset\mathfrak{X}$, there holds
\begin{align*}
(A_{\tau_n}-A_{\tau_0})\begin{pmatrix} u\\
v
\end{pmatrix}=
\begin{pmatrix} 0, &0,\\
0,&-(\tau_n-\tau_0) D
\end{pmatrix}\begin{pmatrix} u\\
v
\end{pmatrix}.
\end{align*}
We obtain that $A_{\tau_n}\rightarrow A_{\tau_0}$ strongly as $n\rightarrow\infty$. Also, it  obtains that  $A^*_{\tau_n}\rightarrow A^*_{\tau_0}$ strongly  as $n\rightarrow\infty$.

Let $\lambda_{\tau_0}>0$ be an unstable eigenvalue of $A_{\tau_0}$. There exists $r>0$ small enough such that the ball $B(\lambda_{\tau_0},r)\subset\rho(A_{\tau_0})\cap\{x\in\mathbb{C}|\Re x>0\}\cup\{\lambda_{\tau_0}\}$  contains no other spectrum expect $\lambda_{\tau_0}$. We claim that
 there exists $N>0$ such that $A_{\tau_n}-\lambda I$ is invertible for all $n>N$ and   $\lambda\in  \partial B(\lambda_{\tau_0},r)$.
In fact, we need to prove that $\lambda_{\tau_0}\pm r$ aren't eigenvalues of $A_{\tau_n}$ for $n>N$.

By contradiction, assume that there exists an subsequence of $\{\tau_n\}_{n=N+1}^\infty$ which is denoted by itself such that  $ u_{\tau_n}$ is the first component of an eigenfunction of $A_{\tau_n}$ with respect to $\lambda_{\tau_0}+ r$ satisfying $\| u_{\tau_n}\|_X=1$ and
\begin{align}\label{e2}
  (\lambda_{\tau_0}+ r)^2\langle u_{\tau_n},\phi\rangle_X+\tau_n(\lambda_{\tau_0}+ r)\langle Du_{\tau_n},\phi\rangle_X+\langle Lu_{\tau_n},\phi\rangle_X=0,\ \forall\phi\in \text{Dom}(D).
\end{align}
Choosing $\phi=u_{\tau_n}$, we have $\langle (L+m)u_{\tau_n},u_{\tau_n}\rangle_X\leq m$. By the assumption (A7), there exists $\bar{u}\neq0$  such that
\begin{align*}
 u_{\tau_n}\rightarrow\bar{u}\ \textrm{in}\ X,\ \textrm{as}\ n\rightarrow \infty.
\end{align*}
Taking $n\rightarrow\infty$ in \eqref{e2}, we obtain
\begin{align*}
   (\lambda_{\tau_0}+ r)^2\langle \bar{u},\phi\rangle_X+\tau_0(\lambda_{\tau_0}+ r)\langle \bar{u},D\phi\rangle_X+\langle\bar{ u},L\phi\rangle_X=0,\ \forall\phi\in \text{Dom}(D),
\end{align*}
which shows that $\bar{u}\in\overline{\text{Ran}((\lambda_{\tau_0}+ r)^2I+\tau_0(\lambda_{\tau_0}+ r)D+L)}^\perp=\text{ker}((\lambda_{\tau_0}+ r)^2I+\tau_0(\lambda_{\tau_0}+ r)D+L)$.
Since $\lambda_{\tau_0}+ r$ is not an eigenvalue of $A_{\tau_0}$, it is a contradiction. The same argument holds for the case $\lambda_{\tau_0}-r$.

We separate the proof into two cases.\\
\textbf{Case 1}: $\tau_n,\tau_0\in (0,1]$, $\lim_{n\rightarrow\infty}\tau_n=\tau_0$.

Note that there exists some constant $C>0$ independent of $n$ such that
\begin{align*}
\|R_{\lambda,\tau_0}\|,\|R_{\lambda,\tau_n}\|\leq C,\ \forall\tau_n,\tau_0\in(0,1],\ \lambda\in \partial B(\lambda_{\tau_0},r).
\end{align*}
Hence, for any fixed $(f,g)^T$, there holds
\begin{align*}
\left\|(R_{\lambda,\tau_n}-R_{\lambda,\tau_0})\begin{pmatrix} f\\g
\end{pmatrix}\right\|^2_{\mathfrak{X}}&=\left\|R_{\lambda,\tau_n}(A_{\tau_0}-A_{\tau_n})R_{\lambda,\tau_0}\begin{pmatrix} f\\g
\end{pmatrix}\right\|^2_{\mathfrak{X}}\lesssim\left\|(A_{\tau_0}-A_{\tau_n})R_{\lambda,\tau_0}\begin{pmatrix} f\\g
\end{pmatrix}\right\|^2_{\mathfrak{X}}.
\end{align*}
Since $A_{\tau_n}\rightarrow A_{\tau_0}$ strongly as $n\rightarrow\infty$,
 we obtain
\begin{align*}
\left\|(R_{\lambda,\tau_n}-R_{\lambda,\tau_0})\begin{pmatrix} f\\g
\end{pmatrix}\right\|^2_{\mathfrak{X}}\rightarrow0,\ \textrm{as}\ n\rightarrow\infty.
\end{align*}
It implies that $R_{\lambda,\tau}$ is strongly continuous at $\tau=\tau_0$.
\\
\textbf{Case 2}:  $\lim_{n\rightarrow\infty}\tau_n=\tau_0=0$.

Let $R_{\lambda,0}=(A_{0}-\lambda I)^{-1}$ and $A_0=\left(
    \begin{array}{cc}
      0 & I \\
      -L & 0 \\
    \end{array}
  \right).$ For any fixed $(f,g)^T$, define
\begin{align}\label{2.26}
&\begin{pmatrix} u_1\\u_2
\end{pmatrix}=R_{\lambda,0}\begin{pmatrix} f\\g
\end{pmatrix},\\
&\begin{pmatrix} v_{1,n}\\v_{2,n}
\end{pmatrix}=R_{\lambda,\tau_n}\begin{pmatrix} f\\g
\end{pmatrix},\label{2.27}
\end{align}
where $\lambda\in \partial B(\lambda_{0},r)$ and $\lambda_0>0$ is the unstable eigenvalue of $A_0$.

To obtain the result, it suffices to prove
\begin{align}\label{2.28}
\lim_{n\rightarrow \infty}\left\|\begin{pmatrix} v_{1,n}\\v_{2,n}
\end{pmatrix}-\begin{pmatrix}u_1\\u_2
\end{pmatrix}\right\|_{X\times X}=0.
\end{align}
By \eqref{2.26} and \eqref{2.27}, we have
\begin{align*}
&-\lambda u_1+u_2=f,\ -Lu_1-\lambda u_2=g,\\
&-\lambda v_{1,n}+v_{2,n}=f,\ -Lv_{1,n}-(\lambda+\tau_n D)v_{2,n}=g.
\end{align*}
That is
\begin{align}\label{2.33}
&(\lambda^2+\tau_n\lambda D+L)v_{1,n}=-g-(\lambda+\tau_n D)f,\\
&(\lambda^2+L)u_1=-g-\lambda f.\label{2.34}
\end{align}
If we prove
\begin{align*}
\lim_{n\rightarrow\infty}\left\|v_{1,n}-u_1\right\|_X=0,
\end{align*}
then \eqref{2.28} holds.

Here, we claim that
\begin{align}\label{cl1}
\left(\lambda^2+\tau_n\lambda D+L\right)^{-1}\ \text{is bounded for all}\ \tau_n\in [0,1],\ \lambda\in \partial B(\lambda_0,r).
\end{align}

First, by the assumption (A7), there exists $m>0$ such that $L+m>0$. We consider the operator $\lambda^2+\tau\lambda D+L+2m$ for $\tau\in[0,1]$ and $\Re\lambda>0$. For every $f_n$ with $\sup_{n\in\mathbb{Z}^+}\|f_n\|_X\lesssim 1$, let $u_n$ satisfy
\begin{align*}
(\lambda^2+\tau\lambda D+L+2m)u_n=f_n.
\end{align*}
Hence, we have
\begin{align*}
\left(\Re(\lambda^2)+m\right)\langle u_n,u_n\rangle_X+\tau\Re\lambda\langle Du_n,u_n\rangle_X+\langle (L+m)u_n,u_n\rangle_X=\langle f_n,u_n\rangle_X.
\end{align*}
Choosing $m>0$ large enough such that $\Re(\lambda^2)+m>0$,  we have
\begin{align*}
\langle (L+m)u_n,u_n\rangle_X\lesssim\|f_n\|_X^2.
\end{align*}
By the assumption (A7), there exists $\bar{u}\in X$ and a subsequence of $\{u_n\}_{n=1}^\infty$ which is denoted by itself such that $u_n\rightarrow\bar{u}$ in $X$. Therefore, $\left(\lambda^2+\tau\lambda D+L+2m\right)^{-1}: X\rightarrow X$ is compact. We obtain the spectra of $\lambda^2+\tau\lambda D+L$ are eigenvalues for $\tau\in[0,1]$ and $\Re\lambda>0$.

Next, we prove \eqref{cl1} by contradiction. If for $\epsilon_n\rightarrow0$, there exists a sequence $\{u_n\}_{n=1}^\infty$ with $\|u_n\|_X=1$ such that
\begin{align}\label{e1}
(\lambda^2+\tau_n\lambda D+L)u_n=\epsilon_n u_n,
\end{align}
then we have
\begin{align*}
(\Re(\lambda^2)-\epsilon_n)+\tau_n\Re\lambda\langle Du_n,u_n\rangle_X+\langle Lu_n,u_n\rangle_X=0.
\end{align*}

Choose $m>0$ large enough such that
\begin{align*}
\Re(\lambda^2)-\epsilon_n+m>0,\ \langle (L+m)u_n,u_n\rangle_X>0.
\end{align*}
Note that
\begin{align*}
\tau_n\Re\lambda\langle Du_n,u_n\rangle_X\geq 0.
\end{align*}
Then, from
\begin{align*}
(\Re(\lambda^2)-\epsilon_n+m)+\tau_n\Re\lambda\langle Du_n,u_n\rangle_X+\langle (L+m)u_n,u_n\rangle_X=2m,
\end{align*}
we have
\begin{align}
\notag&\langle (L+m)u_n,u_n\rangle_X\leq 2m,\\
&\tau_n\Re\lambda\langle Du_n,u_n\rangle_X\leq 2m.\label{2.46}
\end{align}
By the assumption (A7),  there exist $\bar{u}\in Z$ and a subsequence of $\{u_n\}_{n=1}^\infty$ which is denoted by itself such that
\begin{align*}
u_n\rightharpoonup\bar{u}\ \textrm{in}\ Z,\ u_n\rightarrow\bar{u}\ \textrm{in}\ X,\ \textrm{as}\ n\rightarrow \infty.
\end{align*}

By \eqref{e1}, there holds
\begin{align}\label{2.49}
\langle \lambda^2 u_n,\phi\rangle_X+\tau_n\langle \lambda Du_n,\phi\rangle_X+\langle Lu_n,\phi\rangle_X=\epsilon_n\langle u_n,\phi\rangle_X,\ \forall \phi\in \text{Dom}(D).
\end{align}
By \eqref{2.46} and $\lim_{n\rightarrow\infty}\tau_n=0$, we have
\begin{align*}
\tau_n\langle \lambda Du_n,\phi\rangle_X\leq \tau_n|\lambda|\|D^\frac{1}{2}u_n\|_X\|D^\frac{1}{2}\phi\|_X\rightarrow 0,\ \textrm{as}\ n\rightarrow\infty.
\end{align*}
Therefore, by taking $n\rightarrow\infty$ in \eqref{2.49}, we obtain
\begin{align*}
\langle \lambda^2\bar{u},\phi\rangle_X+\langle \bar{u},L\phi\rangle_X=0,\ \forall \phi\in \text{Dom}(D).
\end{align*}
Since $\lambda^2+L$ is invertible, we have $\bar{u}=0$ which is a contradiction.

By \eqref{2.33}, we have
\begin{align*}
v_{1,n}=\left(\lambda^2+\tau_n\lambda D+L\right)^{-1}\left(-g-(\lambda+\tau_n D)f\right).
\end{align*}
Hence, by \eqref{cl1}, there holds
\begin{align}\label{2.53}
\|v_{1,n}\|_X\lesssim_{\|g\|_X,\|f\|_X,\|Df\|_X}1.
\end{align}
There exists $v_1\in X$ and  a subsequence of $\{v_{1,n}\}_{n=1}^\infty$ which is denoted by itself such that $v_{1,n}\rightharpoonup v_1$ in $X$ as $n\rightarrow\infty$.
Note that
\begin{align*}
\langle(\lambda^2+\tau_n\lambda D+L)v_{1,n},\phi\rangle_X=\langle -g-(\lambda+\tau_n D)f,\phi\rangle_X,\ \forall\phi\in \text{Dom}(D).
\end{align*}
Taking $n\rightarrow\infty$, we have
\begin{align*}
\langle v_1,(\overline{\lambda^2}+L)\phi\rangle_X=\langle -g-\lambda f,\phi\rangle_X,\ \forall\phi\in \text{Dom}(D).
\end{align*}
By \eqref{2.34} and  $-\lambda^2\in\rho(L)$, it obtains that $v_1=u_1$.

Moreover, by
\begin{align*}
\left(\lambda^2+\tau_n\lambda D+L+2m\right)v_{1,n}=-g-\left(\lambda+\tau_n D\right)f+2mv_{1,n},
\end{align*}
we have
\begin{align*}
&(\Re(\lambda^2)+m)\langle v_{1,n},v_{1,n}\rangle_X+\tau_n\Re\lambda\langle Dv_{1,n},v_{1,n}\rangle_X+\langle (L+m)v_{1,n},v_{1,n}\rangle_X\\
=&\langle -g-(\lambda+\tau_n D)f,v_{1,n}\rangle_X+2m\langle v_{1,n},v_{1,n}\rangle_X.
\end{align*}
Choosing $m$ large enough such that $\langle (L+m)v_{1,n},v_{1,n}\rangle_X>0$ and $\Re(\lambda^2)+m>0$, we have
\begin{align*}
\langle (L+m)v_{1,n},v_{1,n}\rangle_X\lesssim\|-g-(\lambda+\tau_n D)f\|_X\|v_{1,n}\|_X+2m\|v_{1,n}\|^2_X.
\end{align*}
By \eqref{2.53}, there holds
\begin{align*}
\langle (L+m)v_{1,n},v_{1,n}\rangle_X\lesssim_{m,\|g\|_X,\|f\|_X,\|Df\|_X}1.
\end{align*}
Hence, by the assumption (A7), we obtain
\begin{align*}
v_{1,n}\rightarrow v_1\ \textrm{in}\ X,\ \textrm{as}\ n\rightarrow \infty.
\end{align*}
It implies that $v_{1,n}\rightarrow u_1$ in $X$. It completes the proof.
\end{proof}

\begin{theorem}\label{thm: KTC}
 The number of unstable modes
of the  operator $A$ equals to the number of negative modes of
the operator $L$, i.e., $n^{u}=n^{-}(L)$.
\end{theorem}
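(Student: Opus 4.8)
The plan is to prove the two inequalities $n^u \le n^-(L)$ and $n^u \ge n^-(L)$ separately. The first inequality is already established in Lemma \ref{lem2.3}, where an energy/Chetaev-functional argument shows that on the generalized eigenspace associated with unstable eigenvalues the quadratic form $\langle L\cdot,\cdot\rangle_X$ is negative definite, forcing the dimension of that space to be at most $n^-(L)$. So the core of the proof is the reverse inequality $n^u \ge n^-(L)$, and for this I would use the homotopy $A_\tau$ introduced before Lemma \ref{lem2.4}, deforming from $\tau = 1$ (the full system) down to $\tau = 0$ (the undamped system $u_{tt} + Lu = 0$).

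The key steps are as follows. First, at $\tau = 0$ the operator $A_0 = \begin{pmatrix} 0 & I \\ -L & 0\end{pmatrix}$ has exactly $n^-(L)$ unstable eigenvalues counted with multiplicity: indeed, if $L u = -\omega^2 u$ with $\omega^2 > 0$ on $X_-$, then $\lambda = \pm\omega$ are eigenvalues of $A_0$, and the positive ones contribute precisely $n^-(L)$, while on $X_+$ the operator $L$ is positive so all eigenvalues are purely imaginary. Hence $n^u(A_0) = n^-(L)$. Second, I would track the total unstable multiplicity $n^u(A_\tau)$ as $\tau$ varies over $[0,1]$. By the spectral-projection formula, $n^u(A_\tau) = \operatorname{tr}\,\frac{1}{2\pi i}\oint_{\Gamma} R_{\lambda,\tau}\, d\lambda$, where $\Gamma$ is a contour in $\{\Re\lambda > 0\}$ enclosing all unstable spectrum. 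The essential-spectrum lemma guarantees $\sigma_{\mathrm{ess}}(A_\tau) \subset \{\Re\lambda \le 0\}$ uniformly, so the unstable spectrum consists of finitely many eigenvalues of finite multiplicity and can be enclosed in a fixed bounded region. Lemma \ref{lem2.4} gives strong continuity of $R_{\lambda,\tau}$ (and its adjoint) in $\tau$, which upgrades to continuity of the Riesz projections provided no unstable eigenvalue crosses the imaginary axis; this last point is the crucial rigidity. Third, I would show eigenvalues cannot cross: if $A_\tau u = \lambda u$ with $\Re\lambda = 0$, $\lambda \ne 0$, writing $\lambda = i\beta$ and using the second-order form $(\lambda^2 + \tau\lambda D + L)u_1 = 0$, taking the $X$-inner product with $u_1$ and then the real and imaginary parts gives $-\beta^2\langle u_1,u_1\rangle + \langle Lu_1,u_1\rangle = 0$ and $\tau\beta\langle Du_1,u_1\rangle = 0$; since $D$ is positive definite and $\tau > 0$, the latter forces $u_1 = 0$, a contradiction. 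The case $\lambda = 0$ is excluded by $\ker L = \{0\}$. So no eigenvalue touches the imaginary axis for $\tau \in (0,1]$, and a separate argument (or continuity from $\tau = 0^+$) handles the endpoint behavior. Consequently $n^u(A_\tau)$ is constant on $(0,1]$, equal to its value at $\tau = 1$, and the remaining task is to match this with the $\tau = 0$ count.

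The matching at $\tau = 0$ is the delicate endpoint, and I expect it to be the main obstacle: the resolvent $R_{\lambda,\tau}$ is only strongly (not norm) continuous, and the operators $A_\tau$ have $\tau$-independent essential spectrum but the passage $\tau \to 0^+$ could in principle lose or create unstable eigenvalues near the imaginary axis if eigenvalues escaped to infinity or emerged from the continuous spectrum. This is precisely why assumption (A7) and the compactness it encodes are needed: the argument in Lemma \ref{lem2.4} (Case 2) shows $(\lambda^2 + \tau\lambda D + L)^{-1}$ is uniformly bounded for $\tau \in [0,1]$ and $\lambda$ in a punctured ball around any unstable eigenvalue $\lambda_0$ of $A_0$, and the compact embedding $Z \hookrightarrow X$ prevents unstable eigenvalues from escaping to infinity (one bounds $\langle (L+m)u,u\rangle_X$ uniformly along any would-be escaping sequence and extracts a strong limit, which would be a genuine eigenfunction of $A_0$). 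Assembling these: the unstable eigenvalues of $A_0$ persist (each Riesz projection has continuous rank near $\tau = 0$), and no new ones appear, so $n^u(A_1) = n^u(A_\tau) = n^u(A_0) = n^-(L)$ for all $\tau \in (0,1]$. Combined with Lemma \ref{lem2.3}, this yields $n^u = n^-(L)$, completing the proof.
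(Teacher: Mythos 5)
Your proposal takes essentially the same route as the paper: the bound $n^{u}\le n^{-}(L)$ from Lemma \ref{lem2.3}, the homotopy $A_{\tau}$ with the strong resolvent/Riesz-projection continuity of Lemma \ref{lem2.4}, the fact that unstable eigenvalues are real (so they can only exit the right half plane through $\lambda=0$, which (A7)-compactness plus $\ker L=\{0\}$ forbids), and the count $n^{u}(A_{0})=n^{-}(L)$. The only difference is one of emphasis: the paper uses just the one-sided estimate $\dim\mathrm{Ran}(P_{\tau_{n}})\ge\dim\mathrm{Ran}(P_{\tau_{0}})$ coming from strong convergence of $P_{\tau_{n}},P_{\tau_{n}}^{*}$ (full constancy of $n^{u}(A_{\tau})$ on $(0,1]$, which you assert, does not follow from strong convergence alone but is also not needed once Lemma \ref{lem2.3} is in hand), and it applies the compactness argument to exclude eigenvalues sliding to $0$ as $\tau\to1$, whereas you locate the delicate endpoint at $\tau\to0^{+}$; the ingredients are the same.
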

\begin{proof}
If $n^{-}(L)=0$, then by Lemma \ref{lem2.3}, we know $n^{u}=n^{-}(L)=0$.

If $n^{-}(L)>0$, we divide the proof into three steps.
Let $$\Lambda:=\{\tau\in [0,+\infty)| n^u(A_\tau)=n^-(L) \}$$
where $n^u(A_\tau)$ is the total algebraic multiplicity of the unstable eigenvalues of  $A_\tau$.
We want to prove  $\Lambda=[0,+\infty)$.

\textbf{(Step 1)}.
It is clear that $0\in \Lambda$. By Lemma \ref{lem2.22}, the unstable eigenvalues of $A_\tau$ are real and away from $0$ when $\tau\in[0,\tau']$ for any $\tau'>0$. 

Define the Riesz projection operator:
\begin{align*}
&P_{0}=\oint_{\Gamma} R_{\lambda,0}d\lambda,
\end{align*}
where $\Gamma=\partial B(\lambda_{0},r_{0})$, $\lambda_{0}$ is the eigenvalue of $A_0$ on the right half plane and $r_{0}>0$ is small enough such that the ball $B(\lambda_{0},r_{0})\subset \rho(A_0)\cap\left\{x\in\mathbb{C}|\Re x>0\right\}\cup\{\lambda_0\}$ contains no other spectrum expect $\lambda_{0}$. The integral curve is in the counterclockwise sense.

For $\tau>0$ near to $0$, there exists the resolvent $R_{\lambda,\tau}$ for $\lambda\in \Gamma$. Define
\begin{align*}
P_{\tau}=\oint_{\Gamma} R_{\lambda,\tau}d\lambda.
\end{align*}
By Lemma \ref{lem2.4}, we have $P_{\tau}\rightarrow P_{0}$ and $P^*_{\tau}\rightarrow P^*_{0}$ strongly as $\tau\rightarrow 0^+$.
Therefore,  $\text{dim}(\textrm{Ran}(P_{\tau}))\geq\text{dim}(\textrm{Ran}(P_{0}))$.
It obtains that there exists $\tilde{\tau}>0$ such that  $n^u(A_{\tau})\geq n^u(A_0)$  for $\tau\in[0,\tilde{\tau})$. On the other hand, by the proof of Lemma \ref{lem2.3},
we have $n^u(A_{\tau})\leq n^-(L)=n^u(A_0)$. Therefore, $n^u(A_{\tau})=n^-(L)$ for every $\tau\in[0,\tilde{\tau})$.
This implies that $\Lambda$ is a relative open set at point $0$.

\textbf{(Step 2)}.
For every $\tau_0\in \Lambda$, we consider $\tau\in (\tau_0-\delta,\tau_0+\delta)$ where  $\delta>0$ is small enough. By the proof of Lemma \ref{lem2.4}, there exists the resolvent $R_{\lambda,\tau}$ for $\lambda\in \Gamma=\partial B(\lambda_{\tau_0},r_{0})$ where $\lambda_{\tau_0}$ is the eigenvalue of $A_{\tau_0}$ on the right half plane and $r_{0}>0$ is small enough such that the ball $B(\lambda_{\tau_0},r_{0})\subset \rho(A_{\tau_0})\cap\left\{x\in\mathbb{C}|\Re x>0\right\}\cup\{\lambda_{\tau_0}\}$ contains no other spectrum expect $\lambda_{\tau_0}$. Moreover, we have $P_{\tau}\rightarrow P_{\tau_0}$ strongly as $\tau\rightarrow \tau_0$.
Therefore,  $\text{dim}(\textrm{Ran}(P_{\tau}))\geq\text{dim}(\textrm{Ran}(P_{\tau_0}))$.
It obtains that $n^u(A_{\tau})\geq n^u(A_{\tau_0})$  for $\tau\in (\tau_0-\delta,\tau_0+\delta)$. On the other hand, by the proof of Lemma \ref{lem2.3} and $\tau_0\in \Lambda$,
we have $n^u(A_{\tau})\leq n^-(L)=n^u(A_{\tau_0})$. Therefore, $n^u(A_{\tau})=n^-(L)$ for $\tau\in (\tau_0-\delta,\tau_0+\delta)$.
This implies that for every $\tau_0\in \Lambda$, $\tau_0$ is a inner point of $\Lambda$.

\textbf{(Step 3)}.
Now, we will prove $\Lambda$ is closed. That is if there exists $\{\tau_n\}_{n=1}^\infty\subset\Lambda$ such that  $\lim_{n\rightarrow\infty}\tau_n=\tau_0$, then $\tau_0\in \Lambda$. 

Denote
$$[\cdot,\cdot]=\langle L\cdot,\cdot\rangle_X.$$
Choose  $\{w_i\}_{i=1}^{n^-(L)}\subset \text{Dom}(D)\subset X$ such that
\begin{align}
X_-=\text{span} \left\{w_{1},w_{2},...,w_{n^-(L)}\right\},\quad X_{+}=\left\{\psi\in X|\langle L\psi,\phi\rangle=0,\forall \phi\in X_-\right\}
\end{align}
and $[w_i,w_j]=-\delta_{ij}$ for $1\leq i,j\leq n^-(L)$, where $\delta_{ij}$ is Kronecker symbol.

We denote the unstable eigenspace of $A_{\tau_n}$ by $\mathfrak{X}_-^{(n)}\subset \mathfrak{X}$.
Since $\{\tau_n\}_{n=1}^\infty\subset \Lambda$, we have $n^u(A_{\tau_n})=n^-(L)$. Let $W^{(n)}:=$\{the first component of $\mathfrak{X}_-^{(n)}$\}. Denote $\Pi_\pm: X\rightarrow X_\pm$ to be the projection operators
with $\text{ker}(\Pi_\pm) = X_\mp$. By \eqref{contr} and $n^u(A_{\tau_n})=n^-(L)$, we have $\text{dim}(W^{(n)})=n^-(L)$ and $\Pi_-(W^{(n)})=X_-$.
Then we can choose a basis $\{\vec{\xi}_{\tau_n,1},\ldots,\vec{\xi}_{\tau_n,n^-(L)}\}$ of $\mathfrak{X}_-^{(n)}$
such that $\vec{\xi}_{\tau_n,i}\in\text{Dom}(A)$, $\zeta_{\tau_n,i}=w_i+w^{(n)}_{i+}$ for $i=1,2,...,n^-(L)$ where $\zeta_{\tau_n,i}$ is the first component of $\vec{\xi}_{\tau_n,i}$ and
$w^{(n)}_{i+}\in X_+$.


By the same computation as \eqref{contr}, we have for $i=1,\ldots,n^-(L)$,
\begin{align}
0>\langle L \zeta_{\tau_n,i},\zeta_{\tau_n,i}\rangle_X =\langle L w_i,w_i\rangle_X+\langle L w^{(n)}_{i+},w^{(n)}_{i+}\rangle_X\geq -1+\delta\|w^{(n)}_{i+}\|_X.
\end{align}
So $\|\zeta_{\tau_n,i}\|_X\leq C$ for some positive constant $C$ independent of $n$.
Hence, subject to a subsequence, we have $\zeta_{\tau_n,i}\rightharpoonup \zeta_{\tau_0,i}$  weakly in  $X$
and $\Pi_-(\zeta_{\tau_0,i}) =w_i$.

By using \eqref{contr} and $-1\leq\langle L \zeta_{\tau_n,i},\zeta_{\tau_n,i}\rangle_X<0$, we have
$\tilde{\zeta}_{\tau_n,i}\rightharpoonup \tilde{\zeta}_{\tau_0,i}$ weakly in $X$ for $i=1,\ldots,n^-(L)$ where $\tilde{\zeta}_{\tau_n,i}$ are the second component of $\vec{\xi}_{\tau_n,i}$.
Therefore, $\vec{\xi}_{\tau_n,i}\rightharpoonup \vec{\xi}_{\tau_0,i}$ weakly in $X\times X$ for $i=1,\ldots,n^-(L)$, where $\vec{\xi}_{\tau_0,i}=(\zeta_{\tau_0,i},\tilde{\zeta}_{\tau_0,i})^T$.
This implies that $\{\vec{\xi}_{\tau_0,1},\ldots,\vec{\xi}_{\tau_0,n^-(L)}\}$ generates a $n^-(L)$-dimension space.

For every $j=1,\ldots,n^{-}(L)$, there exist $a^{(i,j)}_{\tau_n},i=1,\ldots,n^{-}(L)$ such that
\begin{align}\label{A}
A_{\tau_n}\vec{\xi}_{\tau_n,j}=\sum_{i=1}^{n^-(L)}a^{(i,j)}_{\tau_n}\vec{\xi}_{\tau_n,i}.
\end{align}
We claim that $a^{(i,j)}_{\tau_n}$ are uniformly bounded for $n\in\mathbb{Z^+}, i,j=1,\ldots,n^-(L)$. Suppose otherwise, there exists a subsequence $\{n_k\}_{k=1}^\infty$ such that
\begin{align*}
|a^{(i_0,j_0)}_{\tau_{n_k}}|=\max_{i,j=1,\ldots,n^-(L)} |a^{(i,j)}_{\tau_{n_k}}|\rightarrow\infty,\ \text{and}\ \forall  i,j=1,\ldots,n^-(L),\ c^{(i,j)}=\lim_{k\rightarrow\infty}\frac{a^{(i,j)}_{\tau_{n_k}}}{a^{(i_0,j_0)}_{\tau_{n_k}}}\ \text{exists}.
\end{align*}
Then, by
\begin{align*}
\frac{1}{a^{(i_0,j_0)}_{\tau_{n_k}}}\langle\vec{\xi}_{\tau_{n_k},j_0},A^*_{\tau_{n_k}}\vec{\phi}\rangle_{X\times X}=&\frac{1}{a^{(i_0,j_0)}_{\tau_{n_k}}}\langle A_{\tau_{n_k}}\vec{\xi}_{\tau_{n_k},j_0},\vec{\phi}\rangle_{X\times X}\\
=&\left\langle \sum_{i=1}^{n^-(L)}\frac{a^{(i,j_0)}_{\tau_{n_k}}}{a^{(i_0,j_0)}_{\tau_{n_k}}}\vec{\xi}_{\tau_{n_k},i},\vec{\phi}\right\rangle_{X\times X},\ \forall \vec{\phi}\in\text{Dom}(A),
\end{align*}
and taking $k\rightarrow\infty$, we have
\begin{align*}
0=\sum_{i=1}^{n^-(L)}\langle c^{(i,j_0)}\vec{\xi}_{\tau_{0},i},\vec{\phi}\rangle_{X\times X},\ \forall \vec{\phi}\in\text{Dom}(A).
\end{align*}
By a density argument, it concludes that $\sum_{i=1}^{n^-(L)}c^{(i,j_0)}\vec{\xi}_{\tau_{0},i}=0.$ Note that $c^{(i_0,j_0)}=1$.  This is in contradiction to the independency of $\{\vec{\xi}_{\tau_0,1},\ldots,\vec{\xi}_{\tau_0,n^-(L)}\}$.

Subject to a subsequence, there exist $a^{(i,j)}_{\tau_0}$ such that $a^{(i,j)}_{\tau_0}:=\lim_{n\rightarrow\infty}a^{(i,j)}_{\tau_n}$ for $i,j=1,\ldots,n^-(L)$.
By \eqref{A}, there holds
\begin{align*}
\left\langle A_{\tau_n}\vec{\xi}_{\tau_n,j},\vec{\phi}\right\rangle_{X\times X}=\left\langle \sum_{i=1}^{n^-(L)}a^{(i,j)}_{\tau_n}\vec{\xi}_{\tau_n,i},\vec{\phi}\right\rangle_{X\times X},\ \forall \vec{\phi}\in\text{Dom}(A),\ j=1,\ldots,n^-(L).
\end{align*}
Taking $n\rightarrow\infty$,  we obtain that
\begin{align*}
\left\langle\vec{\xi}_{\tau_0,j},A^*_{\tau_0}\vec{\phi}\right\rangle_{X\times X}=\left\langle \sum_{i=1}^{n^-(L)}a^{(i,j)}_{\tau_0}\vec{\xi}_{\tau_0,i},\vec{\phi}\right\rangle_{X\times X},\ \forall \vec{\phi}\in\text{Dom}(A),\ j=1,\ldots,n^-(L).
\end{align*}
By a density argument, it implies that
\begin{align*}
A_{\tau_0}\vec{\xi}_{\tau_0,j}= \sum_{i=1}^{n^-(L)}a^{(i,j)}_{\tau_0}\vec{\xi}_{\tau_0,i},\ j=1,\ldots,n^-(L).
\end{align*}
So $\text{span}\{\vec{\xi}_{\tau_0,1},\ldots,\vec{\xi}_{\tau_0,n^-(L)}\}$ is invariant under $A_{\tau_0}$.

Let the $n^{-}(L)\times n^-(L)$ matrices $H_{\tau_n}:=\left(a^{(i,j)}_{\tau_n}\right)$ and $H_{\tau_0}:=\left(a^{(i,j)}_{\tau_0}\right)$. From Lemma \ref{lem2.22}, the upper bound and the lower bound of the eigenvalues of $H_{\tau_n}$ are both positive. Since $\lim_{n\rightarrow\infty}a^{(i,j)}_{\tau_n}=a^{(i,j)}_{\tau_0}$, it obtains that the eigenvalues of $H_{\tau_0}$ are positive bounded below. It shows that $\text{span}\{\vec{\xi}_{\tau_0,1},\ldots,\vec{\xi}_{\tau_0,n^-(L)}\}$ is a subspace of the unstable space of $A_{\tau_0}$, i.e. $n^u(A_{\tau_0})\geq n^-(L)$. Together with Lemma \ref{lem2.3}, we obtain $n^u(A_{\tau_0})=n^-(L)$ and $\tau_0\in\Lambda$.

In all, it concludes that  $\Lambda=[0,+\infty)$.  Thus, we obtain $n^u=n^-(L)$.
\end{proof}


\section{The turning point principle for non-rotating viscous stars.}\label{s3}
In this section, we consider the stability  of the non-rotating stars of \eqref{NSP} under spherically symmetric perturbations. In the spherically symmetric setting, the support $\Omega(t)$ is a ball with radius $R(t)$. For $r=|\textbf{x}|\in (0,R(t))$, set
\begin{align*}
\rho(\textbf{x},t)=\rho(r,t),\quad \textbf{u}(\textbf{x},t)=u(r,t)\frac{\textbf{x}}{r}.
\end{align*}
Then, the system \eqref{NSP} can be rewritten as
\begin{align}\label{1.4}
&\rho_t+\frac{1}{r^2}(r^2\rho u)_r=0&\ \textrm{in}\ (0,R(t)),\\
&\rho(u_t+uu_r)+P_r+\frac{4\pi\rho}{r^2}\int^r_0\rho(s,t)s^2ds=\nu\left(\frac{(r^2u)_r}{r^2}\right)_r&\ \textrm{in}\ (0,R(t)),\label{1.5}
\end{align}
where $\nu=\frac{4\nu_1}{3}+\nu_2>0$ is the viscosity constant.

Since $R(t)$ is the moving interface of fluids and vacuum states, we have
\begin{align}
\rho|_{r=R(t)}=0.
\end{align}
At the free boundary, we impose the kinematic condition:
\begin{align}
\frac{d}{dt}R(t)=u(R(t),t).
\end{align}
The continuity of the  normal stress implies $\textbf{S}\cdot\textbf{n}=0$ at the surface  where $\textbf{n}$ represents the exterior unit normal vector. In the spherically symmetric setting, this condition reduces to
\begin{align}\label{1.8}
P-\frac{4\nu_1}{3}\left(u_r-\frac{u}{r}\right)-\nu_2\left(u_r+\frac{2u}{r}\right)=0
\end{align}
for $r=R(t),t\geq 0.$ Due to the symmetry, it is known that $u(0,t)=0$.

The linearized system of \eqref{1.4}-\eqref{1.5} at a given non-rotating star $(\rho_{\mu}(r), 0)$ with center density $\mu$ is
\begin{align}\label{3.1}
&\rho_t+\frac{1}{r^2}(r^2 \rho_{\mu}u)_r=0,\\
&u_t+ (\Phi''(\rho_{\mu})\rho)_r+V_r(\rho) -\frac{\nu}{\rho_{\mu}}\left(\frac{\left(r^2 u\right)_r}{r^2}\right)_r =0,\label{3.2}
\end{align}
where
\begin{align}\label{3.3}
V_r(\rho) =\frac{4\pi}{r^2}\int^r_0\rho(s)s^2ds.
\end{align}

We can reasonably  take  the density perturbation and the velocity perturbation with  the the same support of $\rho_\mu$. That is $\mathrm{supp}\rho=\mathrm{supp}u= \left\{|\textbf{x}|\leq R_\mu\right\}$. Indeed, the Lagrangian version of the linearized Navier-Stokes-Poisson system is equivalent to \eqref{3.1}-\eqref{3.3} which is shown in the appendix.

To study the stability, we differentiate \eqref{3.2} with respect to $t$ and use \eqref{3.1}. There holds
\begin{align*}
u_{tt}&= -\left(\Phi''(\rho_\mu)\rho_t\right)_r - V_r (\rho_t)+\frac{\nu}{\rho_\mu}\left(\frac{(r^2 u_t)_r}{r^2}\right)_r\\
&=\left(\Phi''(\rho_\mu)\frac{\left(r^2 \rho_\mu u\right)_r}{r^2}\right)_r + V_r \left(\frac{\left(r^2 \rho_\mu u\right)_r}{r^2}\right)+ \frac{\nu}{\rho_\mu}\left(\frac{\left(r^2 u_t\right)_r}{r^2}\right)_r.
\end{align*}
Denote it by
\begin{align}\label{3.4}
u_{tt}=-L_{\mu}u-D_{\mu}u_t,
\end{align}
where
\begin{equation}
L_{\mu}:=-\left(  \Phi^{\prime\prime}(\rho_{\mu})\frac{\left(r^{2}\rho_{\mu}\cdot\right)_{r}}{r^{2}}\right)_{r}  -V_{r}\left(  \frac
{\left(r^{2}\rho_{\mu}\cdot\right)_{r}}{r^{2}}\right)  , \label{defn-L-mu}%
\end{equation}%
\begin{equation}
D_{\mu}:=-\frac{\nu}{\rho_{\mu}}\left(  \frac{\left(r^{2}\cdot\right)_{r}}{r^{2}}\right)_{r}. \label{defn-D-mu}%
\end{equation}
One can see the spectrum problem corresponding to \eqref{3.4} is
\begin{align}\label{3.7}
\lambda^2u+\lambda D_{\mu}u+L_{\mu}u=0.
\end{align}


To apply  Theorem \ref{thm: KTC} obtained in Section \ref{s2}, we should check the assumptions (A1)-(A7). Choose the state space $\mathfrak{X}_{\mu}=X^{D_{\mu}}_1\times X_{\mu}$, where $X_{\mu}$ is a weighted Hilbert space:
\begin{align*}
X_{\mu}:=L^2_{\rho_\mu}=\left\{u(r),r\in [0,R_\mu]\Big|\int_0^{R_\mu}\rho_\mu r^2|u|^2dr<\infty\right\}
\end{align*}
with norm
\begin{align*}
\|u\|_{X_{\mu}}=\int^{R_\mu}_0 \rho_\mu r^2|u(r)|^2dr
\end{align*}
and inner product
\begin{align*}
\langle u,v\rangle_{X_{\mu}}=\Re\int^{R_\mu}_0 \rho_\mu r^2 u\bar{v}dr.
\end{align*}

Rewrite \eqref{3.4} by
\begin{align*}
\vec{u}_t=A_\mu\vec{u}(t)=\left(
    \begin{array}{cc}
      0 & I \\
      -L_\mu & -D_\mu \\
    \end{array}
  \right)\vec{u}(t) ,
\end{align*}
where $\vec{u}=(u,u_t)^{T}$, $\textrm{Dom}(A_{\mu}):=\left(\mathrm{Dom}(L_{\mu})\cap \mathrm{Dom}(D_{\mu})\right)\times \mathrm{Dom}(D_{\mu})$.
The domain of unbounded operator $D_{\mu}:\textrm{Dom} (D_{\mu}) \subset X_{\mu}\rightarrow X_{\mu}$ consists of all functions $u\in L^2_{\rho_\mu}$ with $\frac{\nu}{\rho_\mu}\partial_r \left(\frac{1}{r^2}\partial_r(r^2 u)\right)\in L^2_{\rho_\mu}$ in the distribution sense and   $u(0)=0$, $-\frac{4\nu_1}{3}( \partial_r u-\frac{u}{r})-\nu_2( \partial_r u+ \frac{2u}{r})|_{r=R_\mu}=0$ in the sense of $W^{1,\infty}$-trace and $H^1$-trace respectively.
The domain of  unbounded operator $L_{\mu}:\textrm{Dom} (L_{\mu}) \subset X_{\mu}\rightarrow X_{\mu}$ consists of all functions $u\in L^2_{\rho_\mu}$ with $-\partial_r \left(\Phi''(\rho_\mu)\frac{1}{r^2}\partial_r(r^2 \rho_\mu u)\right)-\partial_r V\left(\frac{1}{r^2}\partial_r(r^2 \rho_\mu u)\right)\in L^2_{\rho_\mu}$ in the distribution sense and $u(0)=0$ in the sense of $W^{1,\infty}$-trace.

Firstly, the assumption (A1) is clear. Secondly, the assumption (A2) is true by the following lemma.
\begin{lemma}\label{lem4.2}
The operators $L_{\mu}$ and  $D_{\mu}$ are self-adjoint, closed and densely defined.
\end{lemma}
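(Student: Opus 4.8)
The plan is to verify the two claims about $L_\mu$ and $D_\mu$ separately, working on the Hilbert space $X_\mu = L^2_{\rho_\mu}$ with the weighted inner product.

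\textbf{The operator $D_\mu$.} First I would write $D_\mu$ in a manifestly symmetric form. Observe that for $u,v$ smooth and vanishing appropriately at $r=0$ and $r=R_\mu$,
\begin{align*}
\langle D_\mu u, v\rangle_{X_\mu} = -\nu\int_0^{R_\mu} r^2 \,\partial_r\!\left(\frac{1}{r^2}\partial_r(r^2 u)\right)\bar v\,dr = \nu\int_0^{R_\mu} \frac{1}{r^2}\,\partial_r(r^2 u)\,\overline{\partial_r(r^2 v)}\,dr,
\end{align*}
after integration by parts, the boundary terms vanishing because of the stress-free boundary condition and the condition $u(0)=0$. This quadratic form is symmetric and nonnegative, so $D_\mu$ is a nonnegative symmetric operator. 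To upgrade symmetry to self-adjointness I would take the natural domain to be the maximal one on which $D_\mu u \in X_\mu$ (equivalently, realize $D_\mu$ as the Friedrichs extension associated to the closed form above, with form domain $X_1^{D_\mu}$), which is self-adjoint by construction; density of the domain follows since smooth compactly supported functions (away from $r=0$) lie in it and are dense in the weighted $L^2$ space. Closedness is automatic for a self-adjoint operator. The role of Hardy's inequality (Lemma \ref{lem3.1}) is to handle the degeneracy of $\rho_\mu$ near $r=R_\mu$ (where $\rho_\mu\approx (R_\mu-r)^{1/(\gamma-1)}$ by the pressure assumptions) and the singularity at $r=0$, ensuring the form is well-defined and that the form domain embeds properly.

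\textbf{The operator $L_\mu$.} Here self-adjointness is \emph{not} claimed, only closedness and dense domain (self-adjointness of $L_\mu$ is asserted elsewhere, with $\ker L_\mu=\{0\}$ under $M'(\mu)\neq0$, and presumably proved via the decomposition into a self-adjoint piece plus a finite-rank perturbation, or by the same Friedrichs-form machinery applied to the Euler-Poisson energy functional; in this lemma I only need closedness). I would first compute the associated bilinear form: integrating by parts,
\begin{align*}
\langle L_\mu u, v\rangle_{X_\mu} = \int_0^{R_\mu}\!\! \Phi''(\rho_\mu)\frac{1}{r^2}\partial_r(r^2\rho_\mu u)\,\overline{\partial_r(r^2 v)}\,dr - \int_0^{R_\mu}\!\! r^2 \partial_r V[u]\,\frac{1}{r^2}\partial_r(r^2\rho_\mu u)\,\bar v\,dr,
\end{align*}
which is the standard linearized-Euler-Poisson energy (its symmetry requires the self-adjointness computation, not needed now). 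For closedness I would show $L_\mu$ is closed on $\mathrm{Dom}(L_\mu) = \{u\in X_\mu : L_\mu u \in X_\mu\}$ (the maximal/distributional domain): if $u_n\to u$ and $L_\mu u_n \to g$ in $X_\mu$, then $u_n\to u$ and $L_\mu u_n\to g$ in the sense of distributions on $(0,R_\mu)$, and since $L_\mu$ is a differential operator it commutes with distributional limits, giving $L_\mu u = g$ with $u\in\mathrm{Dom}(L_\mu)$; this is the general fact that a differential operator on its maximal domain is always closed. Density of the domain follows because it contains $C_c^\infty((0,R_\mu))$, which is dense in $X_\mu$.

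\textbf{Main obstacle.} The routine part is the differential-operator closedness argument; the delicate part is making the domains precise so that the boundary terms in all the integrations by parts genuinely vanish and so that $D_\mu$ really is self-adjoint rather than merely symmetric. This is where the degeneracy of $\rho_\mu$ at the vacuum boundary $r=R_\mu$ and the coordinate singularity at $r=0$ must be controlled; I expect to invoke Hardy's inequality in the form of Lemma \ref{lem3.1} with the appropriate exponent $k$ (determined by $\gamma_1$) to show that functions in the form domain $X_1^{D_\mu}$ have enough regularity and decay for the boundary contributions to drop, and to identify the Friedrichs extension as the self-adjoint operator $D_\mu$. Once the function spaces are set up correctly, the two assertions follow as indicated.
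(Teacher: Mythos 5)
Your proposal is correct in substance, but it takes a genuinely different route from the paper, whose entire proof is a citation: the properties of $L_{\mu}$ are taken from \cite{LZ2019} (where self-adjointness, and hence closedness, is established by essentially the quadratic-form machinery you invoke), and the claims for $D_{\mu}$ are simply declared to follow from its definition. Your self-contained construction of $D_{\mu}$ from the closed nonnegative form $\int\big(\frac{\nu_2}{r^2}|\partial_r(r^2u)|^2+\frac{4\nu_1}{3}r^4|\partial_r(\frac{u}{r})|^2\big)dr$ is sound: the form is coercive relative to $X_{\mu}$ (this is exactly the computation $D_{\mu1}+2D_{\mu2}=\int(3r^2|\partial_ru|^2+6|u|^2)dr$ appearing in Lemma \ref{lem3.3}), the representation theorem then yields a self-adjoint, densely defined operator, and the stress-free condition at $r=R_{\mu}$ in the paper's stated domain of $D_{\mu}$ is precisely the natural boundary condition of this form — so your route actually explains why that domain is the correct one, something the paper never addresses. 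Two cautions. First, your parenthetical ``equivalently, the maximal domain'' is not automatic: the maximal operator $\{u:D_{\mu}u\in X_{\mu}\}$ need not carry the stress-free condition unless the degeneracy $\rho_{\mu}\sim(R_{\mu}-r)^{1/(\gamma-1)}$ is shown to force it, so you should commit to the form realization together with the boundary-identification step you already flagged as delicate. Second, for $L_{\mu}$ note that the nonlocal term collapses, $\partial_rV\big(\frac{1}{r^2}\partial_r(r^2\rho_{\mu}u)\big)=4\pi\rho_{\mu}u$, a bounded multiplication on $X_{\mu}$ (this identity is used in Lemma \ref{lem4.1}); this both simplifies your closedness argument and sidesteps part of the coefficient-regularity issue in the distributional formulation (with only $P\in C^1$, moving two derivatives onto a test function requires differentiating $\Phi''(\rho_{\mu})$, regularity the paper uses implicitly elsewhere), and in any case closedness of $L_{\mu}$ also follows at once from its self-adjointness in \cite{LZ2019}, which is the shortcut the paper takes; the missing factor $\rho_{\mu}$ in your displayed bilinear form is harmless since, as you say, symmetry is not needed here.
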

\begin{proof}
The property of $L_\mu$ is obtained in \cite{LZ2019}. By the definition of $D_{\mu}$, we can easily obtain the results.
\end{proof}

Thirdly, the assumption (A3) holds due to
\begin{align}\label{D}
\langle D_{\mu}u,u\rangle_{X_{\mu}}=\int_0^{R_\mu}   \left(\frac{\nu_2}{r^2}|(r^2u)_r|^2+\frac{4\nu_1}{3}r^4\left|\left(\frac{u}{r}\right)_r\right|^2\right)dr> 0,\ \forall u\neq 0.
\end{align}

 The following two lemmas are used to check the assumption (A4)-(A6).

\begin{lemma}\label{lem3.3}
The operator $L_{\mu}$ is $D_{\mu}$-bounded.
\end{lemma}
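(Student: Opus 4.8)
The plan is to show that $\mathrm{Dom}(D_\mu)\subseteq\mathrm{Dom}(L_\mu)$ and that there is a constant $C>0$ with $\|L_\mu u\|_{X_\mu}\le C\big(\|D_\mu u\|_{X_\mu}+\|u\|_{X_\mu}\big)$ for all $u\in\mathrm{Dom}(D_\mu)$. First I would write out $L_\mu u$ from \eqref{defn-L-mu} using the Leibniz rule so that it becomes a sum of terms of the schematic form
\begin{align*}
L_\mu u = -\Phi''(\rho_\mu)\,\partial_r\!\Big(\tfrac{1}{r^2}\partial_r(r^2\rho_\mu u)\Big) - \Phi'''(\rho_\mu)\rho_\mu'\,\tfrac{1}{r^2}\partial_r(r^2\rho_\mu u) - \partial_r V\,\tfrac{1}{r^2}\partial_r(r^2\rho_\mu u),
\end{align*}
and then expand $\frac{1}{r^2}\partial_r(r^2\rho_\mu u)=\rho_\mu(\partial_r u+\tfrac{2}{r}u)+\rho_\mu' u$. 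The leading (second-derivative) piece of $L_\mu u$ is $-\Phi''(\rho_\mu)\rho_\mu\big(\partial_{rr}u+\tfrac{2}{r}\partial_r u-\tfrac{2}{r^2}u\big)$ up to lower-order terms, which is exactly (a smooth multiple of) the principal part appearing inside $D_\mu u = -\tfrac{\nu}{\rho_\mu}\partial_r\big(\tfrac1{r^2}\partial_r(r^2 u)\big)=-\tfrac{\nu}{\rho_\mu}\big(\partial_{rr}u+\tfrac2r\partial_r u-\tfrac2{r^2}u\big)$. Hence the principal part of $L_\mu u$ equals $\tfrac{\Phi''(\rho_\mu)\rho_\mu^2}{\nu}\,D_\mu u$, and the coefficient $\Phi''(\rho_\mu)\rho_\mu^2/\nu = P'(\rho_\mu)\rho_\mu/\nu$ is bounded on $[0,R_\mu]$ by \eqref{assumption-P1}–\eqref{assumption-P2}. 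The remaining terms of $L_\mu u$ involve at most one derivative of $u$, weighted by powers of $\rho_\mu,\rho_\mu',\partial_r V$ and powers of $r$.

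The key step is then to control these lower-order remainder terms by $\|D_\mu u\|_{X_\mu}+\|u\|_{X_\mu}$. For this I would use the elliptic-type coercivity of $D_\mu$: from the identity already recorded in the verification of (A3),
\begin{align*}
\langle D_\mu u,u\rangle_{X_\mu}=\int_0^{R_\mu}\Big(\tfrac{\nu_2}{r^2}|\partial_r(r^2u)|^2+\tfrac{4\nu_1}{3}r^4\big|\partial_r(\tfrac{u}{r})\big|^2\Big)dr,
\end{align*}
together with Cauchy–Schwarz $\langle D_\mu u,u\rangle_{X_\mu}\le \|D_\mu u\|_{X_\mu}\|u\|_{X_\mu}$, I get an a priori bound on $\int_0^{R_\mu} r^2|\partial_r u|^2\,dr$ and (via Hardy's inequality, Lemma \ref{lem3.1}, applied with the appropriate exponent $k$ near $r=0$ and near the vacuum boundary $r=R_\mu$ where $\rho_\mu\to 0$) on the weighted $L^2$ norms of $u/r$ and of $u$ itself. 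Near $r=R_\mu$ the density degenerates like $\rho_\mu\sim (R_\mu-r)^{1/(\gamma-1)}$, so the weighted norm $\|\cdot\|_{X_\mu}$ is weak there; I would check that each remainder term, after inserting the explicit powers of $\rho_\mu$ and its derivatives coming from $\Phi''',\rho_\mu'$, etc., has enough vanishing to be dominated by the $D_\mu$-energy — this is where the hypothesis $1<\gamma<2$ enters, since it pins down the boundary exponent and the validity of the relevant Hardy inequality ($k<1$ versus $k>1$).

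The main obstacle I expect is precisely the boundary analysis at $r=R_\mu$: one must verify that the lower-order coefficients (built from $\rho_\mu'$, $\Phi'''(\rho_\mu)$, which can blow up as $\rho_\mu\to0$) do not destroy $D_\mu$-boundedness, and that the interior singularity at $r=0$ is handled by the Hardy inequality with $k>1$. A secondary point is that $D_\mu$-boundedness as defined in (A6) also asks for $L_\mu\in\mathcal L(X_1^{D_\mu},X_\mu)$, i.e. the estimate should be uniform and give a genuine bounded operator on the graph space; once the pointwise/integral estimate above is in place this is immediate, so I would state it as a corollary of the same computation. I would organize the write-up as: (1) expand $L_\mu u$ and isolate the principal part $=cD_\mu u$ with $c$ bounded; (2) list the finitely many remainder terms; (3) bound each by $C(\|D_\mu u\|_{X_\mu}+\|u\|_{X_\mu})$ using the $D_\mu$-energy identity and Lemma \ref{lem3.1}; (4) conclude $\mathrm{Dom}(D_\mu)\subseteq\mathrm{Dom}(L_\mu)$ and the norm bound, hence (A6).
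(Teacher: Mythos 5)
Your proposal is correct and follows essentially the same route as the paper: expand $L_\mu u$ by the Leibniz rule, observe that the principal second-order part is $D_\mu u$ times the bounded coefficient $P'(\rho_\mu)\rho_\mu/\nu$ (the paper's term $I_2$, controlled by $\rho_\mu^4|\Phi''(\rho_\mu)|^2\le C$), and dominate the remaining lower-order terms by $\|D_\mu u\|_{X_\mu}$ using the coercivity $\int_0^{R_\mu}(r^2|\partial_r u|^2+|u|^2)\,dr\lesssim\|D_\mu u\|_{X_\mu}^2$ from the $D_\mu$-energy identity together with Hardy's inequality (Lemma \ref{lem3.1}) and the boundary asymptotics $\rho_\mu\sim(R_\mu-r)^{1/(\gamma-1)}$, which is exactly where $\gamma<2$ enters in the paper's estimates of $I_1,I_3$--$I_6$. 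The only remaining work is the routine term-by-term bookkeeping of those finitely many remainder terms, which your plan identifies with the right tools and exponents.
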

\begin{proof}
Let
\begin{align*}
D_{\mu1}&:=\int^{R_\mu}_0  \frac{1}{r^2}|(r^2u)_r|^2 dr
=\int^{R_\mu}_0\left(r^2|u_r |^2+4r\Re (\bar{u}u_r)+4|u|^2\right)dr,\\
D_{\mu2}&:=\int^{R_\mu}_0  r^4\left|\left(\frac{u}{r}\right)_r\right|^2dr
=\int^{R_\mu}_0\left( r^2|u_r|^2-2r\Re (\bar{u}u_r )+|u|^2\right)dr.
\end{align*}
There holds
\begin{align*}
D_{\mu1}+2D_{\mu2}=\int^{R_\mu}_0 \left( 3r^2|u_r |^2+6|u|^2\right)dr
\end{align*}
and
\begin{align}\label{3.21}
\int^{R_\mu}_0 \left(r^2|u_r|^2+|u|^2\right)dr\lesssim \langle D_{\mu}u,u\rangle_{X_{\mu}}\lesssim \|D_{\mu}u\|^2_{X_{\mu}}.
\end{align}

To obtain the result, we shall show
\begin{align*}
\|L_{\mu}u\|_{X_{\mu}}=\left\|\left(\Phi''(\rho_\mu )\frac{1}{r^2}\left(r^2 \rho_\mu u\right)_r\right)_r +V_r\left(\frac{1}{r^2}(r^2 \rho_\mu u)_r\right)\right\|_{X_{\mu}}
\lesssim\|u\|_{X_{\mu}}+\left\|D_\mu u\right\|_{X_{\mu}}.
\end{align*}
Indeed, we only need to estimate
\begin{align*}
&\left\|\left(\Phi''(\rho_\mu )\frac{1}{r^2}(r^2 \rho_\mu u)_r\right)_r \right\|_{X_{\mu}}\\
=&\int_0^{R_\mu}\rho_\mu  r^2\left|\left(\Phi''(\rho_\mu )\frac{1}{r^2}(r^2 \rho_\mu u)_r\right)_r\right|^2dr\\
=&\int_0^{R_\mu}\rho_\mu r^2\left|\Phi''(\rho_\mu )\left(\frac{(r^2\rho_\mu  u)_r}{r^2}\right)_r+(\Phi''(\rho_\mu ))_r\frac{(r^2\rho_\mu  u)_r}{r^2}\right|^2dr\\
\lesssim&\int_0^{R_\mu}\Bigg(\rho_\mu r^2|\Phi''(\rho_\mu )|^2\left|\left(\frac{\rho_\mu (r^2 u)_r+r^2u\rho'_\mu }{r^2}\right)_r\right|^2+\rho_\mu r^2|(\Phi''(\rho_\mu ))_r|^2\left|\frac{r^2u\rho'_\mu +\rho_\mu (r^2u)_r}{r^2}\right|^2\Bigg)dr\\
\lesssim&\int_0^{R_\mu}\Bigg(\rho_\mu r^2|\Phi''(\rho_\mu )|^2\left(|\rho'_\mu |^2\left|\frac{(r^2u)_r}{r^2}\right|^2+|\rho_\mu |^2\left|\left(\frac{(r^2u)_r}{r^2}\right)_r\right|^2+|u_r |^2|\rho'_\mu |^2+|u|^2|\rho''_\mu |^2\right)\\
&+\rho_\mu r^2|(\Phi''(\rho_\mu ))_r|^2\left(|\rho'_\mu |^2|u|^2+\left|\frac{\rho_\mu }{r^2}(r^2u)_r\right|^2\right)\Bigg)dr\\
=:&\sum_{i=1}^6I_i.
\end{align*}

Consider
\begin{align*}
I_4=\int^{R_\mu}_0\rho_\mu r^2|\Phi''(\rho_\mu )|^2|u|^2|\rho''_\mu |^2dr.
\end{align*}
By the Hardy's inequality \cite{Jang2013CPAM,KML2007} and $\gamma_1<2$, for $r_2<R_\mu$ close to $R_\mu$, we have
\begin{align*}
&\int^{R_{\mu}}_{r_2}\rho_\mu r^2|\Phi''(\rho_\mu )|^2|u|^2|\rho''_\mu |^2dr\\
\lesssim&\int^{R_{\mu}}_{r_2}(R_\mu-r)^{\frac{1}{\gamma_1-1}}(R_\mu-r)^{\frac{2(\gamma_1-2)}{\gamma_1-1}}(R_\mu-r)^{\frac{2}{\gamma_1-1}-4}|ru|^2dr\\
\lesssim&\int^{R_{\mu}}_{r_2}(R_\mu-r)^{\frac{3-2\gamma_1}{\gamma_1-1}}|u|^2dr\\
\lesssim&\int^{R_{\mu}}_{r_2}(R_\mu-r)^{\frac{1}{\gamma_1-1}}\left(|u|^2+|u_r|^2\right)dr\\
\lesssim&\|D_{\mu}u\|^2_{X_{\mu}}.
\end{align*}
Here, we use $\rho_\mu(r)\thicksim (R_\mu-r)^\frac{1}{\gamma_1-1}$ for $r<R_\mu$ close to $R_\mu$.
For $r_1>0$ small enough, we get
\begin{align*}
\int^{r_1}_0\rho_\mu r^2|\Phi''(\rho_\mu )|^2|u|^2|\rho''_\mu |^2dr
\lesssim\int^{r_1}_0\rho_\mu r^2|u|^2dr.
\end{align*}
It concludes that $I_4\lesssim\|D_{\mu}u\|^2_{X_{\mu}}.$ The estimate of $I_5=\int^{R_\mu}_0\rho_\mu r^2|(\Phi''(\rho_\mu ))_r|^2|u|^2|\rho'_\mu |^2dr$ is similar to $I_4$.

Note that for $r<R_\mu$ close to $R_\mu$, there holds
\begin{align*}
\rho_\mu ^4|\Phi''(\rho_\mu )|^2\lesssim(R_\mu-r)^\frac{4}{\gamma_1-1}(R_\mu-r)^\frac{2(\gamma_1-2)}{\gamma_1-1}\leq C.
\end{align*}
Therefore,
$$I_2=\int_0^{R_\mu}\rho_\mu r^2|\Phi''(\rho_\mu )|^2|\rho_\mu |^2\left|\left(\frac{(r^2u)_r}{r^2}\right)_r\right|^2dr\lesssim \|D_{\mu}u\|^2_{X_{\mu}}.$$

The arguments about the estimates of $$I_1=\int_0^{R_\mu}\rho_\mu r^2|\Phi''(\rho_\mu )|^2|\rho'_\mu |^2\left|\frac{(r^2u)_r}{r^2}\right|^2dr$$ and $$I_6=\int^{R_\mu}_0\rho_\mu r^2|(\Phi''(\rho_\mu ))_r|^2\left|\frac{\rho_\mu }{r^2}(r^2u)_r\right|^2dr$$ are almost the same. Taking $I_1$ for example, note that
\begin{align*}
I_1\lesssim\int^{R_\mu}_0 \rho_\mu r^2|\Phi''(\rho_\mu )|^2|\rho'_\mu |^2\left(|u_r|^2+\frac{|u|^2}{r^2}\right)dr
=:I_{11}+I_{12}.
\end{align*}
Obviously, for $r<R_\mu$ close to $R_\mu$, we have
\begin{align*}
\rho_\mu |\Phi''(\rho_\mu )|^2|\rho'_\mu |^2\lesssim(R_\mu-r)^{\frac{1}{\gamma_1-1}+\frac{2(\gamma_1-2)}{\gamma_1-1}+\frac{2}{\gamma_1-1}-2}\lesssim(R_\mu-r)^\frac{1}{\gamma_1-1}\leq C.
\end{align*}
Hence, there holds
\begin{align*}
I_{12}=\int^{R_\mu}_0 \rho_\mu |\Phi''(\rho_\mu )|^2|\rho'_\mu |^2|u|^2dr
\lesssim\|D_{\mu}u\|^2_{X_{\mu}}.
\end{align*}
Similarly, we have
\begin{align*}
I_{11}=\int^{R_\mu}_0  \rho_\mu r^2|\Phi''(\rho_\mu )|^2|\rho'_\mu |^2|u_r|^2dr
\lesssim\int^{R_\mu}_0  r^2|u_r|^2dr.
\end{align*}
It obtains that $I_1\lesssim \|D_{\mu}u\|^2_{X_{\mu}}.$ The estimate of $I_3=\int_0^{R_\mu}\rho_\mu r^2|\Phi''(\rho_\mu )|^2|u_r|^2|\rho'_\mu|^2dr$ is similar to $I_{11}$. It completes the proof.
\end{proof}

\begin{lemma}
The operator $-D_{\mu}$ generates a strongly continuous semigroup on $X_{\mu}$.
\end{lemma}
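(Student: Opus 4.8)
The plan is to verify the hypotheses of the Lumer--Phillips theorem (a corollary of the Hille--Yosida theorem, Lemma \ref{lem3.2}), showing that $-D_{\mu}$ is $m$-dissipative on $X_{\mu}$, so that it generates a contraction $C_{0}$-semigroup. Recall from the verification of (A3) above that for $u \in \mathrm{Dom}(D_{\mu})$,
\begin{align*}
\langle D_{\mu}u,u\rangle_{X_{\mu}} = \int_{0}^{R_{\mu}}\Big(\frac{\nu_{2}}{r^{2}}|\partial_{r}(r^{2}u)|^{2} + \frac{4\nu_{1}}{3}r^{4}\big|\partial_{r}(u/r)\big|^{2}\Big)\,dr \geq 0,
\end{align*}
which immediately gives $\langle (-D_{\mu})u,u\rangle_{X_{\mu}} \leq 0$, i.e.\ $-D_{\mu}$ is dissipative. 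Since $D_{\mu}$ is self-adjoint, closed and densely defined by Lemma \ref{lem4.2}, the operator $-D_{\mu}$ is likewise self-adjoint, closed and densely defined, so condition (i) of Lemma \ref{lem3.2} holds.

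Next I would establish the range condition: $\mathrm{Ran}(\lambda I + D_{\mu}) = X_{\mu}$ for some (hence all) $\lambda > 0$. Because $D_{\mu}$ is self-adjoint and nonnegative, $\lambda I + D_{\mu}$ is self-adjoint with $\langle (\lambda I + D_{\mu})u,u\rangle_{X_{\mu}} \geq \lambda \|u\|_{X_{\mu}}^{2}$, so it is injective with closed range; and a self-adjoint operator with closed range and trivial kernel is surjective. Concretely, given $f \in X_{\mu}$, solving $(\lambda I + D_{\mu})u = f$ amounts to the elliptic two-point boundary value problem
\begin{align*}
\lambda \rho_{\mu} r^{2}u - \nu\, r^{2}\,\partial_{r}\Big(\frac{1}{r^{2}}\partial_{r}(r^{2}u)\Big) = \rho_{\mu}r^{2}f,
\end{align*}
with the natural boundary conditions $u(0)=0$ and the stress-free condition \eqref{1.8} at $r=R_{\mu}$; this is solvable by the Lax--Milgram theorem applied to the bilinear form $B(u,w) = \lambda\langle u,w\rangle_{X_{\mu}} + \langle D_{\mu}u,w\rangle_{X_{\mu}}$ on the form domain, whose coercivity follows from the computation above together with the Hardy inequality (Lemma \ref{lem3.1}) to control the weighted $L^{2}$ norm near the degenerate endpoints. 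Once both dissipativity and the range condition are in hand, Lumer--Phillips yields that $-D_{\mu}$ generates a $C_{0}$-semigroup of contractions on $X_{\mu}$ (equivalently, one checks the resolvent estimate $\|(\lambda I + D_{\mu})^{-1}\| \leq 1/\lambda$ of Lemma \ref{lem3.2} directly from self-adjointness and nonnegativity).

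The main obstacle is the degeneracy of the weight $\rho_{\mu}$, which vanishes like $(R_{\mu}-r)^{1/(\gamma-1)}$ at the free boundary and the coordinate singularity at $r=0$: one must choose the correct form domain (the completion of smooth functions vanishing at $0$ under the norm $\|u\|_{X_{\mu}}^{2} + \langle D_{\mu}u,u\rangle_{X_{\mu}}$), check that the boundary terms arising from integration by parts in $\langle D_{\mu}u,w\rangle_{X_{\mu}}$ genuinely vanish there (this is exactly where the stress-free boundary condition \eqref{1.8} enters), and confirm coercivity of $B$ on that space. All of these are handled by the Hardy inequalities already recorded in Lemma \ref{lem3.1}, much as in the $D_{\mu}$-boundedness argument for $L_{\mu}$ just completed; the remaining verifications are routine and I would not belabor them.
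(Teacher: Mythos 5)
Your proposal is correct and follows essentially the same route as the paper: the paper also reduces the claim to the resolvent-type estimate $\|(\lambda+D_{\mu})u\|_{X_{\mu}}\geq \Re\lambda\,\|u\|_{X_{\mu}}$, obtained exactly as you do from the nonnegativity of $\langle D_{\mu}u,u\rangle_{X_{\mu}}$, and then invokes the generation theorem (Lemmas \ref{lem2.2} and \ref{lem3.2}). The only difference is that you additionally verify the range condition $\mathrm{Ran}(\lambda I+D_{\mu})=X_{\mu}$ via self-adjointness (Lemma \ref{lem4.2}), a point the paper leaves implicit, so your write-up is if anything slightly more complete.
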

\begin{proof}
For  $\lambda\in\mathbb{C}$ with $\Re\lambda>0$, by \eqref{D}, we have
\begin{align*}
\|(\lambda+D_{\mu})u\|_{X_{\mu}}^2=|\lambda|^2\langle u,u\rangle_{X_{\mu}}+2\Re\lambda\langle D_{\mu}u,u\rangle_{X_{\mu}}+\langle D_{\mu}u,D_{\mu}u\rangle_{X_{\mu}}\geq (\Re\lambda)^2\|u\|^2_{X_{\mu}}.
\end{align*}
By the Hille-Yosida Theorem in \cite{EN2000}, it completes the proof.
\end{proof}

The following lemmas can help us to check the assumptions (A4) and (A7).
\begin{lemma}\label{lem4.1}
There exists a positive constant $m>0$ such that $(L_{\mu}+m)^{-1}$ is compact in $X_\mu$. Hence, all nonzero eigenvalues $\lambda_k^2\in\mathbb{R}$ of $L_{\mu}$ are isolated with finite multiplicity satisfying
\begin{align*}
\lim_{k\rightarrow+\infty} \lambda^2_k=+\infty.
\end{align*}
Moreover, there exists a decomposition $X_{\mu}=X_{\mu-}\oplus X_{\mu+}$ such that
 $$n^{-}(L_{\mu}):=\dim X_{\mu-}<\infty,\ L|_{X_{\mu-}\backslash \{0\}}<0,\ L|_{X_{\mu+}\backslash \{0\}}\geq\delta>0.$$
\end{lemma}
\begin{proof}
Define the function space $Z_{\mu}$ to be the closure of $\textrm{Dom} (L_{\mu}) \subset X_\mu$ equipped with norm
\begin{align*}
\|u\|_{Z_{\mu}}=\left(\int_0^{R_\mu}\rho_\mu r^2|u|^2dr\right)^{\frac{1}{2}}+\left(\int_0^{R_\mu} \Phi''(\rho_\mu)r^2|\frac{1}{r^2}(r^2\rho_\mu u)_r|^2dr\right)^\frac{1}{2}.
\end{align*}
Similar to the proof of Lemma 3.13 in \cite{LZ2019}, we obtain the embedding $Z_{\mu}\hookrightarrow X_\mu$ is compact.
By \eqref{3.3}, we have
\begin{align*}
\int_0^{R_\mu}V_r\left(\frac{(r^2\rho_\mu u)_r}{r^2}\right)r^2\rho_\mu \bar{u} dr=\int_0^{R_\mu} 4\pi r^2\rho_\mu^2|u|^2dr\lesssim \int_0^{R_\mu} \rho_\mu|u|^2r^2dr.
\end{align*}
Therefore, there exists a positive constant $m>4\pi\sup_{r\in[0,R_\mu]}\rho_\mu$ such that
\begin{align*}
&\langle (L_{\mu}+m)u,u\rangle_{X_{\mu}}\\
=&\int_0^{R_\mu} \Phi''(\rho_\mu)\left|\frac{(r^2\rho_\mu u)_r}{r^2}\right|^2r^2dr-\int_0^{R_\mu} 4\pi r^2\rho_\mu^2|u|^2dr+m\int_0^{R_\mu} \rho_\mu r^2 |u|^2dr\\
\geq& \int_0^{R_\mu} \Phi''(\rho_\mu)\left|\frac{(r^2\rho_\mu u)_r}{r^2}\right|^2r^2dr.
\end{align*}

For every $f\in X_\mu$, consider the following equation:
\begin{align}\label{4.6}
(L_{\mu}+m)u=f.
\end{align}
Multiplying \eqref{4.6} with $r^2\rho_\mu \bar{u}$ and integrating from $0$ to $R_\mu$, we have
\begin{align*}
\int_0^{R_\mu} \left(\frac{1}{r^2}\Phi''(\rho_\mu)|(r^2 \rho_\mu u)_r|^2-4\pi r^2\rho_\mu^2|u|^2+mr^2\rho_\mu |u|^2\right)dr=\int_0^{R_\mu} f r^2\rho_\mu \bar{u}dr.
\end{align*}
Hence,
\begin{align*}
\int_0^{R_\mu} \frac{1}{r^2}\Phi''(\rho_\mu)|(r^2 \rho_\mu u)_r|^2dr\lesssim \|f\|^2_{X_\mu}.
\end{align*}
By the compactness of embedding $Z_{\mu}\hookrightarrow X_\mu$, we conclude that  $(L_{\mu}+m)^{-1}$ is compact.
\end{proof}

Denote $L_{in}^\mu:=\Phi''(\rho_\mu)-4\pi(-\Delta)^{-1}$.
By Lemma 3.7 and Lemma 3.10 in \cite{LZ2019} and $\frac{dM_\mu}{d\mu}\neq0$,  we know that
\begin{align*}
\begin{cases}
\ker (L_{in}^\mu)=\text{span}\left\{\frac{d\rho_\mu}{d\mu}\right\},\text{ if } \frac{d}{d\mu}\left(\frac{M_\mu}{R_{\mu}}\right)=0,\\
\ker (L_{in}^\mu)=\text{span}\left\{0\right\},\text{ if } \frac{d}{d\mu}\left(\frac{M_\mu}{R_{\mu}}\right)\neq0.\\
\end{cases}
\end{align*}
For any $u\in \ker(L_\mu)\subset X_\mu$, we have $L_\mu u=\left(L_{in}^\mu\left(\frac{(r^2\rho_\mu u)_r}{r^2}\right)\right)_r=0$.
Hence, there holds $L_{in}^\mu\left(\frac{(r^2\rho_\mu u)_r}{r^2}\right)=C$ which yields
\begin{align*}
0=\left\langle L_{in}^\mu\left(\frac{(r^2\rho_\mu u)_r}{r^2}\right),\frac{d\rho_\mu}{d\mu}\right\rangle_{L^2(0,R_\mu)}=C\frac{dM_\mu}{d\mu}.
\end{align*}
It shows that $C=0$ and $\frac{(r^2\rho_\mu u)_r}{r^2}\in \ker \left(L_{in}^\mu\right)$. If $\frac{d}{d\mu}\left(\frac{M_\mu}{R_{\mu}}\right)\neq0$, we have $u=0$.
If $\frac{d}{d\mu}\left(\frac{M_\mu}{R_{\mu}}\right)=0$,  we have $\frac{(r^2\rho_\mu u)_r}{r^2}=\tilde{C}\frac{d\rho_\mu}{d\mu}$ for some constant $\tilde{C}$.
By the fact that $0=\int_0^{R_\mu}\frac{(r^2\rho_\mu u)_r }{r^2}dr=\tilde{C}\frac{dM_\mu}{d\mu}$ and $\frac{dM_\mu}{d\mu}\neq0$, we know that
$u=\tilde{C}=0$. In all, one has $\ker(L_\mu)=\{0\}$ when $\frac{dM_\mu}{d\mu}\neq0$. Together with Lemma \ref{lem4.1}, it obtains that the assumption (A4) holds.

\begin{lemma}\label{lem4.3}
There holds
\begin{align*}
\langle L_{\mu}u,u\rangle_{X_{\mu}}\lesssim \langle D_{\mu}u,u\rangle_{X_{\mu}}+\langle u,u\rangle_{X_{\mu}}\lesssim\langle D_{\mu}u,u\rangle_{X_{\mu}}.
\end{align*}
\end{lemma}
\begin{proof}
Note that
\begin{align*}
\langle L_{\mu}u,u\rangle_{X_{\mu}}=&\int_0^{R_\mu}  \frac{\Phi''(\rho_\mu )}{r^2}|(r^2\rho_\mu u)_r|^2dr-\int_0^{R_\mu} 4\pi r^2\rho_\mu^2u^2dr\\
\leq&\int_0^{R_\mu}  \frac{\Phi''(\rho_\mu )}{r^2}|(r^2\rho_\mu u)_r|^2dr\\
\lesssim&\int_0^{R_\mu}  \frac{\Phi''(\rho_\mu )}{r^2}\left(|\rho'_\mu |^2|r^2u|^2+|\rho_\mu |^2|(r^2u)_r|^2\right)dr\\
=:&I_{L1}+I_{L2}.
\end{align*}
By the assumption (P1), we have
\begin{align*}
\Phi''(\rho_\mu )|\rho_\mu |^2\sim\rho_\mu ^{\gamma_1-2}\rho_\mu ^2\sim\rho_\mu ^{\gamma_1}\leq C,
\end{align*}
for $r<R_\mu$ close to $R_\mu$. Hence, it obtains that $I_{L2}\lesssim \langle D_{\mu}u,u\rangle_{X_{\mu}}$.

For $r_2<R_\mu$ close to $R_\mu$, we have
\begin{align*}
\int^{R_\mu}_{r_2}\frac{\Phi''(\rho_\mu )}{r^2}|\rho'_\mu |^2|r^2u|^2dr
\lesssim&\int^{R_\mu}_{r_2}(R_\mu-r)^\frac{\gamma_1-2}{\gamma_1-1}(R_\mu-r)^{\frac{2}{\gamma_1-1}-2}|u|^2r^2dr\\
\lesssim&\int^{R_\mu}_{r_2}(R_\mu-r)^\frac{2-\gamma_1}{\gamma_1-1}|u|^2r^2dr.
\end{align*}
For $r_1>0$ close to $0$, we get
\begin{align*}
\int^{r_1}_0\frac{\Phi''(\rho_\mu )}{r^2}|\rho'_\mu |^2|r^2u|^2dr
\lesssim\int^{r_1}_0 r^2|u|^2dr.
\end{align*}
By \eqref{3.21}, it obtains that
$
I_{L1}\lesssim\langle D_{\mu}u,u\rangle_{X_\mu}
$
which completes the proof.
\end{proof}

After  checking  (A1)-(A7), we  apply
Theorem \ref{thm: KTC} to \eqref{3.1}-\eqref{3.3} and obtain Theorem \ref{nEPeqnNSP} and Corollary \ref{ttpforvis}.

\section{Nonlinear stability under the spherically
symmetric perturbation}\label{Nonlinearstability}

In this section, we assume that  $n^-(L_{\mu})=0$ and $\frac{dM_{\mu}}{d\mu}\neq0$. By Theorem \ref{nEPeqnNSP}, the corresponding non-rotating gaseous star modeled by Navier-Stokes-Poisson equations  is linearly stable. We will study the nonlinear stability in this part.

\subsection{Formulation in Lagrangian coordinates}
In order to compare with \cite{LTXZZH20162} more conveniently, we redefine the symbols of each variable. Let $x$ be the radius of the spherical coordinate system in 3-dimensional Euclidean space.
Recall the system \eqref{1.4}-\eqref{1.8}:
\begin{align}\label{5.5}
&x^2\rho_t+(x^2\rho u)_x=0,&\ \textrm{in}\ (0,R(t)),\\
&\rho(u_t+uu_x)+P_x+\frac{4\pi\rho}{x^2}\int^x_0\rho s^2ds=\nu(x^{-2}(x^2u)_x)_x,&\ \textrm{in}\ (0,R(t)),\label{5.6}\\
\notag&\frac{dR(t)}{dt}=u(R(t),t),&\\
\notag&\frac{4\nu_1}{3}\left(u_x-\frac{u}{x}\right)+\nu_2\left(u_x+\frac{2u}{x}\right)=0, \rho=0,&\ \text{at}\ x=R(t),\\
\notag&u(x,t)=0,&\ \text{at}\ x=0.
\end{align}

The particle trajectory Lagrangian formulation is stated as follows. Let  $r(x,t)$ be the  Lagrangian variable satisfying 
\begin{align*}
&r_t(x,t)=u(r(x,t),t),\\
&\int^{r(x,t)}_0\rho(s,t)s^2ds=\int^{x}_0\rho_\mu(s)s^2ds,\quad 0\leq x\leq R_\mu.
\end{align*}
Define the Lagrangian density and velocity  by
\begin{align*}
f(x,t)=\rho(r(x,t),t),\ v(x,t)=u(r(x,t),t).
\end{align*}
Then, \eqref{5.5}-\eqref{5.6} can be rewritten as
\begin{align}\label{5.13}
&(r^2f)_t+r^2f\frac{v_x}{r_x}=0,&\ \textrm{in}\ (0,R_\mu),\\
&fv_t+\frac{(P(f))_x}{r_x}+\frac{4\pi f}{r^2}\int^{x}_0\rho_\mu s^2ds=\frac{\nu}{r_x}\left(\frac{(r^2v)_x}{r^2r_x}\right)_x,&\ \textrm{in}\ (0,R_\mu).\label{5.14}
\end{align}
By \eqref{5.13}, we have
\begin{align}\label{5.15}
f(x,t)=\frac{x^2\rho_\mu}{r^2r_x}.
\end{align}
Hence, \eqref{5.14} becomes
\begin{align}\label{5.16}
\rho_\mu\left(\frac{x}{r}\right)^2v_t-\nu\left(\frac{(r^2v)_x}{r^2r_x}\right)_x
=-\left(P\left(\frac{x^2\rho_\mu}{r^2r_x}\right)\right)_x+(P(\rho_\mu))_x\frac{x^4}{r^4},\ \textrm{in}\ (0,R_\mu) .
\end{align}
Here, we use
\begin{align}\label{P}
(P(\rho_\mu))_x=-\frac{4\pi \rho_\mu}{x^2}\int^x_0\rho_\mu s^2ds.
\end{align}

Denote
\begin{align}\label{B}
B(x,t)=\nu\frac{v_x}{r_x}+\left(2\nu_2-\frac{4}{3}\nu_1\right)\frac{v}{r}=\frac{4}{3}\nu_1\frac{r}{r_x}\left(\frac{v}{r}
\right)_x+\nu_2\frac{(r^2v)_x}{r_xr^2}.
\end{align}
Then,  we have
\begin{align*}
\nu\left(\frac{(r^2v)_x}{r^2r_x}\right)_x=B_x+4\nu_1\left(\frac{v}{r}\right)_x.
\end{align*}
The boundary value conditions become
\begin{align}\label{7.31}
v(0,t)=0,\ B(R_\mu,t)=0.
\end{align}

Set $w=\frac{r}{x}-1$. Then,
\begin{align}\label{5.19}
r=x+xw,r_x=1+w+xw_x,v=xw_t,v_x=w_t+xw_{tx},v_t=xw_{tt}.
\end{align}
Linearizing the above transforms around $r=x$ and $v=0$, we obtain  the relation between the linear perturbations:
\begin{align}
&\delta r=x\delta w,\delta r_x=\delta w+x\delta w_x,\delta v=x\delta w_t,\delta v_x=\delta w_t+x\delta w_{tx},\delta v_t=x\delta w_{tt},\label{ltran}\\
&x\delta r_x-\delta r=x^2\delta w_x, x^2\delta w_{tx}=x\delta v_x-\delta v.\label{ltran2}
\end{align}

By \eqref{5.15} and \eqref{5.19},
the linear perturbation of density  satisfies
\begin{align}\label{pd}
\delta\rho&=\left(-2\frac{\rho_\mu}{x}-\rho'_\mu\right)\delta r-\rho_\mu\delta r_x
=\left(-2\frac{\rho_\mu}{x}-\rho'_\mu\right)x\delta w-\rho_\mu\left(\delta w+x\delta w_x\right).
\end{align}
The linear perturbation of velocity satisfies
\begin{align}\label{pv}
\delta u=\delta v=x\delta w_t.
\end{align}

Note that the linearized equation around $r=x,v=0$ of \eqref{5.16} is
\begin{align}
&\rho_\mu \delta v_t-3\left(\rho_\mu P'(\rho_\mu)\right)_x\frac{\delta r}{x}-(\rho_\mu P'(\rho_\mu))_x\left(\delta r_x-\frac{\delta r}{x}\right)\nonumber\\&-\rho_\mu P'(\rho_\mu)\frac{(x^2(x\delta r_x-\delta r))_x}{x^3}
+4\frac{(P(\rho_\mu))_x}{x}\delta r=\nu\left(\delta v_x+\frac{2\delta v}{x}\right)_x.\label{5.18}
\end{align}
This equation is equivalent to \eqref{3.1}-\eqref{3.3} which is shown in the Part II of Appendix.

Without confusion, we omit the perturbation symbol $\delta$. Using \eqref{ltran} and \eqref{ltran2}, \eqref{5.18} becomes
\begin{align}\label{5.23}
x\rho_\mu  w_{tt}-\left(3(\rho_\mu P'(\rho_\mu))_x-4(P(\rho_\mu))_x\right)w-\frac{\left(\rho_\mu P'(\rho_\mu)x^4 w_x\right)_x}{x^3}=\nu(xw_{xt}+3 w_t)_x.
\end{align}
The linearized boundary condition is
\begin{align}\label{5.24}
xw_{xt}+ w_t=0,\ \text{at}\ x=R_\mu.
\end{align}
One can see the system \eqref{5.23}-\eqref{5.24} is coincide with (A.41a)-(A.41b) in \cite{LTXZZH20162} for the polytropes.

%

\subsection{Linear stable estimates}
Multiplying \eqref{5.23} with $x^3w_t$ and $x^3w$ respectively, integrating from $0$ to $R_\mu$ and using \eqref{5.24}, we obtain
\begin{align}
&\frac{1}{2}\frac{d}{dt}\int_0^{R_\mu} \left(x^4\rho_\mu |w_t |^2+x^4P'(\rho_\mu)\rho_\mu |w_x |^2+x^3\left((P(\rho_\mu))_x-3(P'(\rho_\mu))_x\rho_\mu\right)w^2\right)dx\nonumber\\
&+\nu\int_0^{R_\mu}  x^2\left((w_t +xw_{tx})^2+2|w_t |^2\right)dx=0\label{5.25}
\end{align}
and
\begin{align}
\int_0^{R_\mu}  x^4\rho_\mu |w_t|^2dx=&\frac{1}{2}\frac{d}{dt}\int_0^{R_\mu} \left(2x^4\rho_\mu ww_t +\nu(x^2(w+x w_x)^2+2x^2w^2)\right)dx\nonumber\\
&+\int_0^{R_\mu} \left(x^4P'(\rho_\mu)\rho_\mu |w_x |^2+x^3\left((P(\rho_\mu))_x-3(P'(\rho_\mu))_x\rho_\mu\right)w^2\right) dx.\label{5.26}
\end{align}

Now, multiplying \eqref{5.25} with $(1+t)$ and integrating from $0$ to $t$, we have
\begin{align*}
&(1+t)\int_0^{R_\mu} \left( \ x^4\rho_\mu |w_t |^2+x^4P'(\rho_\mu)\rho_\mu |w_x|^2+x^3\left((P(\rho_\mu))_x-3(P'(\rho_\mu))_x\rho_\mu\right)w^2\right)dx\\
&-\int^t_0\int_0^{R_\mu} \left( x^4\rho_\mu |w_s |^2+x^4P'(\rho_\mu)\rho_\mu | w_x |^2+x^3((P(\rho_\mu))_x-3(P'(\rho_\mu))_x\rho_\mu)w^2\right)dxds\\
&+2\nu\int^t_0(1+s)\int_0^{R_\mu}   x^2\left((w_s+xw_{sx})^2+2|w_s |^2\right)dxds\\
=&\int_0^{R_\mu} \left( x^4\rho_\mu |w_t|^2+x^4P'(\rho_\mu)\rho_\mu |w_x |^2+x^3((P(\rho_\mu))_x-3(P'(\rho_\mu))_x\rho_\mu)w^2\right)dx\Big|_{t=0}.
\end{align*}
Integrating \eqref{5.25} and \eqref{5.26} from $0$ to $t$ respectively, we have
\begin{align*}
&\int_0^{R_\mu} \left( x^4\rho_\mu |w_t |^2+x^4P'(\rho_\mu)\rho_\mu |w_x|^2+x^3((P(\rho_\mu))_x-3(P'(\rho_\mu))_x\rho_\mu)w^2\right)dx\\
&+2\nu\int^t_0\int_0^{R_\mu}   x^2\left((w_s +xw_{sx})^2+2|w_s |^2\right)dxds\\
=&\int_0^{R_\mu} \left( x^4\rho_\mu |w_t |^2+x^4P'(\rho_\mu)\rho_\mu |w_x |^2+x^3\left((P(\rho_\mu))_x-3(P'(\rho_\mu))_x\rho_\mu\right)w^2\right)dx\Big|_{t=0}
\end{align*}
and
\begin{align*}
&\int_0^{R_\mu}  \left(2x^4\rho_\mu w w_t+\nu(x^2(w+xw_x )^2+2x^2w^2)\right)dx-\int^t_0\int_0^{R_\mu}   x^4\rho_\mu |w_s|^2dxds\\
&+\int^t_0\int_0^{R_\mu} \left( x^4P'(\rho_\mu)\rho_\mu | w_x |^2+x^3((P(\rho_\mu))_x-3(P'(\rho_\mu))_x\rho_\mu)w^2\right)dxds\\
=&\int_0^{R_\mu}  \left(2x^4\rho_\mu ww_t +\nu(x^2(w+xw_x)^2+2x^2w^2)\right)dx\Big|_{t=0}.
\end{align*}
Combining these, there holds
\begin{align}
\notag&\int_0^{R_\mu}  \left(x^2(w+xw_x)^2+2x^2w^2\right)dx+(1+t)\left(\int_0^{R_\mu}  x^4\rho_\mu |w_t|^2dx+\langle L_{\mu}(xw),(xw)\rangle_{X_\mu}\right)\\
\notag&+\int^t_0\langle L_{\mu}(xw),(xw)\rangle_{X_\mu} ds+\int^t_0(1+s)\int_0^{R_\mu}  \left(x^2(w_s +xw_{sx})^2+2x^2|w_{s}|^2\right)dxds\\
\lesssim &\int_0^{R_\mu} \left( x^4\rho_\mu | w_t|^2+x^4P'(\rho_\mu)\rho_\mu |w_x|^2+x^3((P(\rho_\mu))_x-3(P'(\rho_\mu))_x\rho_\mu)w^2\right)dx\Big|_{t=0}.\label{5.50}
\end{align}

By \eqref{P}, we have
\begin{align*}
-4\pi\rho_\mu^2x^4
=&\rho_\mu x^2\left(\frac{P'(\rho_\mu)(\rho'_\mu)^2}{\rho_\mu}\right)_x\\
=&-\frac{P'(\rho_\mu)(\rho'_\mu)^2 x^4}{\rho_\mu}+(P'(\rho_\mu))_x\rho'_\mu x^4+P'(\rho_\mu)\rho''_\mu x^4+2P'(\rho_\mu)\rho'_\mu x^3.
\end{align*}
Therefore,
\begin{align}
\langle L_{\mu}(xw),(xw)\rangle_{X_\mu}&=\int_0^{R_\mu}   \left( \frac{P'(\rho_\mu)}{\rho_\mu}\frac{1}{x^2}|(x^3\rho_\mu w)_x|^2-4\pi x^4\rho_\mu^2w^2\right)dx\nonumber\\
&=\int_0^{R_\mu}   \left( x^3(4( P(\rho_\mu))_x-3(\rho_\mu P'(\rho_\mu))_x)w^2+x^4P'(\rho_\mu)\rho_\mu w^2_x\right)dx\nonumber\\
&=\int_0^{R_\mu}   \left( x^4P'(\rho_\mu)\rho_\mu w^2_x+x^3((P(\rho_\mu))_x-3\rho_\mu(P'(\rho_\mu))_x)w^2\right)dx. \label{5.52}
\end{align}

By Lemma 3.13 in \cite{LZ2019}, there holds
\begin{align*}
\int_0^{R_\mu} \frac{\Phi''(\rho_\mu )}{r^2}|(r^2\rho_\mu u)_r|^2dr\gtrsim\int_0^{R_\mu} r^2\rho_\mu |u|^2dr,\ \forall u\in \textrm{Dom}(L_\mu).
\end{align*}
Hence, by \eqref{a}, there exists a constant $C>0$ small enough such that
\begin{align}
\langle L_{\mu}u,u\rangle_{X_\mu}=&\int_0^{R_\mu} \left(\frac{\Phi''(\rho_\mu )}{r^2}|(r^2\rho_\mu u)_r|^2-4\pi r^2\rho^2_\mu|u|^2\right)dr\nonumber\\
\geq& C\int_0^{R_\mu}\left(\frac{\Phi''(\rho_\mu )}{r^2}|(r^2\rho_\mu u)_r|^2+ r^2\rho_\mu |u|^2\right)dr.\label{5.37}
\end{align}
Replacing $u$ and $r$   by $xw$ and $x$, there holds
\begin{align*}
\langle L_{\mu}(xw),(xw)\rangle_{X_\mu}&\gtrsim\int_0^{R_\mu}  P'(\rho_\mu )\rho_\mu x^2|(xw)_x |^2dx+\int_0^{R_\mu}  x^4\rho_\mu |w|^2dx\\
&\gtrsim\int_0^{R_\mu}  P'(\rho_\mu )\rho_\mu x^4|w_x |^2dx+\int_0^{R_\mu}  x^4\rho_\mu |w|^2dx.
\end{align*}
Therefore, by \eqref{5.50}, we have
\begin{align}
&(1+t)\int_0^{R_\mu} x^4\rho_\mu|w|^2dx+(1+t)\int_0^{R_\mu}  P'(\rho_\mu )\rho_\mu x^4|w_x |^2dx+(1+t)\int x^4\rho_\mu |w_t|^2dx\nonumber\\
\lesssim & \int_0^{R_\mu} \left( x^4\rho_\mu |w_t |^2+x^4P'(\rho_\mu)\rho_\mu |w_x |^2+x^3( (P(\rho_\mu))_x-3(P'(\rho_\mu))_x\rho_\mu)w^2\right)dx\Big|_{t=0}.\label{5.55}
\end{align}

By the Hardy's inequality, for $r_1>0$ small enough, we have
\begin{align*}
\int^{r_1}_0x^2|w|^2dx\lesssim \int^{r_1}_0 (x^4|w|^2+x^4|w_x |^2)dx.
\end{align*}
It obtains that
\begin{align*}
&\int^{r_1}_0\frac{P'(\rho_\mu)}{\rho_\mu}x^2|\rho_\mu w|^2dx\lesssim\int^{r_1}_0 P'(\rho_\mu)\rho_\mu x^2|w|^2dx\lesssim \int^{r_1}_0 \left(x^4|w|^2+x^4|w_x |^2\right)dx,\\
&\int^{r_1}_0\frac{P'(\rho_\mu)}{\rho_\mu}x^2|\rho'_\mu xw|^2dx\lesssim\int^{r_1}_0 x^4|w|^2dx.
\end{align*}
For $r_2<R_\mu$ close to $R_\mu$, we get
\begin{align*}
\int^{R_\mu}_{r_2}\frac{P'(\rho_\mu)}{\rho_\mu}x^2|\rho_\mu w|^2dx\lesssim&\int^{R_\mu}_{r_2} \rho^{\gamma_1}_\mu x^2|w|^2dx\lesssim \int^{R_\mu}_{r_2} \rho_\mu x^4|w|^2dx,\\
\int^{R_\mu}_{r_2}\frac{P'(\rho_\mu)}{\rho_\mu}x^2|\rho'_\mu xw|^2dx
\lesssim&\int^{R_\mu}_{r_2}(R_\mu-r)^{\frac{2-\gamma_1}{\gamma_1-1}}|w|^2dx\\
\lesssim&\int^{R_\mu}_{r_2}(R_\mu-r)^{\frac{2-\gamma_1}{\gamma_1-1}+2}(|w_x |^2+|w|^2)dx\\
\lesssim&\int^{R_\mu}_{r_2} \left(P'(\rho_\mu)\rho_\mu x^4|w_x |^2+x^4\rho_\mu|w|^2\right)dx.
\end{align*}
Then, by \eqref{pd}, there holds
\begin{align*}
\int_0^{R_\mu} \frac{P'(\rho_\mu)}{\rho_\mu}|\rho|^2x^2dx
\lesssim&\int_0^{R_\mu} \frac{P'(\rho_\mu)}{\rho_\mu}x^2\left(|\rho_\mu x w_x|^2+|\rho_\mu w|^2+|\rho'_\mu xw|^2\right)dx\\
\lesssim& \int_0^{R_\mu}\left(P'(\rho_\mu)\rho_\mu x^4| w_x|^2+x^4\rho_\mu|w|^2\right)dx.
\end{align*}

Combining with \eqref{pv}, \eqref{5.52} and \eqref{5.55}, we have
\begin{align*}
&(1+t)\int_0^{R_\mu} \left(x^2\rho_\mu|r|^2+x^2\rho_\mu|u|^2+\frac{P'(\rho_\mu)}{\rho_\mu}x^2|\rho|^2\right)dx\\
\lesssim& \left(\int_0^{R_\mu}  x^4\rho_\mu |w_t|^2dx+\langle L_\mu(xw),(xw)\rangle_{X_\mu}\right)\Big|_{t=0}\\
\lesssim&\int_0^{R_\mu} \left(x^2\rho_\mu|r|^2+x^2\rho_\mu|u|^2+\frac{P'(\rho_\mu)}{\rho_\mu}x^2|\rho|^2\right)dx\Big|_{t=0}.
\end{align*}

Summarizing the above arguments, we state
\begin{theorem}
Assume that $n^-(L_\mu)=0$ and $\frac{dM_{\mu}}{d\mu}\neq0$, i.e., the  non-rotating star of the corresponding Navier-Stokes-Possion system \eqref{1.4}-\eqref{1.5} is  linearly stable. Then, we have
\begin{align}
&(1+t)\int_0^{R_\mu} \left(x^2\rho_\mu|r|^2+x^2\rho_\mu|u|^2+\frac{P'(\rho_\mu)}{\rho_\mu}x^2|\rho|^2\right)dx\nonumber\\
\lesssim&\int_0^{R_\mu} \left(x^2\rho_\mu|r|^2+x^2\rho_\mu|u|^2+\frac{P'(\rho_\mu)}{\rho_\mu}x^2|\rho|^2\right)dx\Big|_{t=0},\label{ldecay}
\end{align}
where $\rho,u$ are the linear spherically symmetric perturbation of density and velocity respectively, $x\in[0,R_\mu]$ is the reference variable and $r(x,t)$ is the Lagrangian variable.
\end{theorem}

\begin{remark}
By the assumptions $n^-(L_{\mu})=0$, $\frac{dM_{\mu}}{d\mu}\neq0$, Lemma \ref{lem4.1} and the fact that $L_{\mu}$ satisfies the assumption (A4), there exists a constant $\lambda'>0$ such that
\begin{align}\label{a}
\langle L_{\mu}u,u\rangle_{X_\mu}\geq \lambda'\|u\|_{X_\mu}>0,\ \forall u\neq 0.
\end{align}

By \eqref{3.7}, we have
\begin{align}\label{b}
\langle \lambda^2 u,u\rangle_{X_\mu}+\langle \lambda D_{\mu}u,u\rangle_{X_\mu}+\langle L_{\mu}u,u\rangle_{X_\mu}=0.
\end{align}
There holds
\begin{align*}
2\Im\lambda\Re\lambda\langle u,u\rangle_{X_\mu}+\Im\lambda\langle D_{\mu}u,u\rangle_{X_\mu}=0.
\end{align*}
If $\Re\lambda\geq 0$, by \eqref{D}, we obtain $\Im\lambda=0$. Therefore, by \eqref{a} and \eqref{b}, we get $u\equiv 0$ for $\lambda\geq 0$. It obtains that all of the spectra are on the left half plane which is consistent with  that the  non-rotating viscous gaseous star  are linearly stable. However, it is unknown whether the decay estimate \eqref{ldecay} is sharp, which is very interesting to study in the future.
\end{remark}

\subsection{Nonlinear estimates for stability}
In  this subsection, we consider the nonlinear stability of the Lane-Emden stars of the
Navier-Stokes-Poisson system under spherically symmetric perturbations. We extend the results of \cite{LTWYZH,LTXZZH20162} by  assuming  the sharp linear stability condition $n^-(L_{\mu})=0$ and $\frac{dM_\mu}{d\mu}\neq 0$. In  \cite{LTXZZH20162}, the authors establish the global regularity uniformly up to the vacuum boundary. It ensures the long time asymptotic uniform convergence of the evolving vacuum boundary, density and velocity to those of the Lane-Emden stars with detailed convergence rates, and  long time behaviors of solutions near the vacuum boundary.
The difficulty is caused by the degeneracy and singular behavior near the vacuum free boundary and coordinates singularity at the symmetry center. Their proof depends on the polytropic case of $\gamma\in(\frac{4}{3},2)$ strongly. We face the same difficulty. Although the estimates in our proof are similar to the ones in \cite{LTXZZH20162},  we mainly use the positivity of the quadratic form $\langle L_{\mu}\cdot,\cdot\rangle_{X_\mu}$ to absorb the higher order rest term which is very different from \cite{LTWYZH,LTXZZH20162}.  Compared with the equation of states in \cite{LTWYZH}, we need more regularity of the pressure in addition and  no more assumptions for the center density of stars. The model in our paper is more general than the polytropic case including the white dwarf stars.

Additionally, in this subsection, we assume that $P(\rho)$ satisfies\\
(P2) $P(s)\in C^3(0,+\infty)$ and $P'(0)=0$. There exists $K_2>0$ such that $$\lim_{s\rightarrow 0^+}P''(s)s^{2-\gamma_1}=\lim_{s\rightarrow 0^+}|P'''(s)|s^{3-\gamma_1}=K_2.$$

\subsubsection{The positive  quadratic form}\label{sub421}
The coordinate transform \eqref{5.19} is needed in the argument about the positivity of the quadratic form $\langle L_{\mu}\cdot,\cdot\rangle_{X_\mu}$. To prepare for the next, we prove that $\langle L_{\mu}(xw),(xw)\rangle_{X_\mu}$ can absorb $\int_0^{R_\mu} x^2Q_0(w,(xw)_x)dx$ where $Q_0(w,(xw)_x)$ is the generic term  bounded by
\begin{align}\label{Q}
|Q_0(w,(xw)_x)|\lesssim\epsilon P(\rho_\mu)\left(|w|^2+|(xw)_x|^2\right)
\end{align}
provided that $|w|, |(xw)_x|\leq \epsilon$ small enough.

\begin{lemma}\label{lemL}
For $|w|, |(xw)_x|\leq \epsilon$ small enough, assume that $Q_0(w,(xw)_x)$ satisfies \eqref{Q}. There holds
\begin{align*}
&\int_0^{R_\mu}   \left( x^4P'(\rho_\mu)\rho_\mu w^2_x+x^3((P(\rho_\mu))_x-3\rho_\mu(P'(\rho_\mu))_x)w^2\right)dx+\int_0^{R_\mu} x^2Q_0(w,(xw)_x)dx\\
\gtrsim&\int_0^{R_\mu}  \left(  P'(\rho_\mu )\rho_\mu x^2|(xw)_x|^2+x^2\rho_\mu |w|^2\right)dx.
\end{align*}
\end{lemma}

\begin{proof}
Define
\begin{align*}
A_1^2:=\int_0^{R_\mu} \frac{\Phi''(\rho_\mu )}{r^2}|r^2\rho_\mu u_r|^2dr,\ A_2^2:=\int_0^{R_\mu} \frac{\Phi''(\rho_\mu )}{r^2}|(r^2\rho_\mu )_r u|^2dr,\ A_3^2:=\int_0^{R_\mu}  \rho_\mu |u|^2dr.
\end{align*}
For $r_1>0$ close to $0$, there holds
\begin{align*}
\int^{r_1}_0\frac{\Phi''(\rho_\mu )}{r^2}r^4\rho_\mu ^2|u_r |^2dr&=\int^{r_1}_0P'(\rho_\mu )\rho_\mu r^2|u_r|^2dr\gtrsim\int^{r_1}_0 r^2|u_r|^2dr,\\
\int^{r_1}_0\rho_\mu |u|^2dr&\gtrsim\int^{r_1}_0|u|^2dr.
\end{align*}
On the other hand, by  the Hardy's inequality, we have
\begin{align*}
&\int^{r_1}_0 \frac{\Phi''(\rho_\mu )}{r^2}|(r^2\rho_\mu )_r u|^2dr=\int^{r_1}_0\frac{\Phi''(\rho_\mu )}{r^2}|u|^2|2r\rho_\mu +r^2\rho'_\mu |^2dr\\
\sim&\int^{r_1}_0\frac{|u|^2}{r^2}\left(4r^2\rho_\mu ^2+4r^3\rho_\mu \rho'_\mu +r^4(\rho'_\mu )^2\right)dr
\lesssim\int^{r_1}_0|u|^2dr\\
\lesssim&\int^{r_1}_0 r^2\left(|u|^2+|u_r|^2\right)dr.
\end{align*}
For $r_2<R_\mu$ close to $R_\mu$, there holds
\begin{align*}
&\int^{R_{\mu}}_{r_2}P'(\rho_\mu )\rho_\mu r^2|u_r|^2dr\sim \int^{R_{\mu}}_{r_2}\rho_\mu ^{\gamma_1}|u_r|^2dr\sim \int^{R_{\mu}}_{r_2}(R_\mu-r)^\frac{\gamma_1}{\gamma_1-1}|u_r|^2dr,\\
&\int^{R_{\mu}}_{r_2}\rho_\mu |u|^2dr\sim\int^{R_{\mu}}_{r_2}(R-r)^\frac{1}{\gamma_1-1}|u|^2dr.
\end{align*}
Moreover, we have
\begin{align*}
&\int^{R_{\mu}}_{r_2}\frac{\Phi''(\rho_\mu )}{r^2}|(r^2\rho_\mu )_r u|^2dr\\
\sim&\int^{R_{\mu}}_{r_2}\frac{\rho^{\gamma_1-2}_\mu}{r^2}|u|^2\left|(r^2(R_\mu-r)^{\frac{1}{\gamma_1-1}})_r\right|^2dr\\
\sim&\int^{R_{\mu}}_{r_2}|u|^2(R_\mu-r)^{\frac{\gamma_1-2}{\gamma_1-1}}\left(4(R_\mu-r)^{\frac{2}{\gamma_1-1}}-4r(R_\mu-r)^{\frac{2}{\gamma_1-1}-1}+r^2(R_\mu-r)^{\frac{2}{\gamma_1-1}-2}\right)dr\\
\sim&\int^{R_{\mu}}_{r_2}|u|^2(R_\mu-r)^{\frac{2-\gamma_1}{\gamma_1-1}}\left(4(R_\mu-r)^2-4r(R_\mu-r)+r^2\right)dr\\
\sim&\int^{R_{\mu}}_{r_2}|u|^2(R_\mu-r)^{\frac{2-\gamma_1}{\gamma_1-1}}dr\\
\lesssim&\int^{R_{\mu}}_{r_2}(R_\mu-r)^{\frac{\gamma_1}{\gamma_1-1}}\left(|u|^2+|u_r|^2\right)dr.
\end{align*}
Hence, we get
\begin{align*}
A_2^2\lesssim\left(A_1^2+A_3^2\right).
\end{align*}

Lemma 3.13 in \cite{LZ2019} shows that
\begin{align*}
A^2_1+A_2^2\gtrsim\int_0^{R_\mu}  \frac{\Phi''(\rho_\mu )}{r^2}\left|(r^2\rho_\mu  u)_r\right|^2dr\gtrsim A_3^2.
\end{align*}
Therefore, combining with \eqref{5.37}, there holds
\begin{align*}
\langle L_{\mu}u,u\rangle_{X_\mu}\gtrsim(A_1+A_2)^2\gtrsim\left(A_1^2+A_3^2\right).
\end{align*}

Note that $P(\rho_\mu)\sim P'(\rho_\mu)\rho_\mu$ for $0<r<R_\mu$ near to $R_\mu$.
Replacing $u,r$ by $xw,x$ respectively,
we conclude that
\begin{align*}
&\langle L_{\mu}(xw),(xw)\rangle_{X_\mu}+\int_0^{R_\mu} x^2Q_0(w,(xw)_x)dx\\
\gtrsim&\frac{1}{2}\langle L_{\mu}(xw),(xw)\rangle_{X_\mu}\nonumber\\
 \gtrsim& \int_0^{R_\mu}  P'(\rho_\mu )\rho_\mu x^2|(xw)_x|^2dx+\int_0^{R_\mu} \rho_\mu x^2|w|^2dx
\end{align*}
provided that $|w|,|(xw)_x|\leq \epsilon$ small enough.

Together with \eqref{5.52}, we have
\begin{align}
&\int_0^{R_\mu}   \left( x^4P'(\rho_\mu)\rho_\mu w^2_x+x^3((P(\rho_\mu))_x-3\rho_\mu(P'(\rho_\mu))_x)w^2+x^2Q_0(w,(xw)_x)\right)dx\nonumber\\
\gtrsim&\int_0^{R_\mu}  \left(  P'(\rho_\mu )\rho_\mu x^2|(xw)_x|^2+x^2\rho_\mu |w|^2\right)dx\nonumber\\
=&\int_0^{R_\mu}   \left( P'(\rho_\mu )\rho_\mu x^2|r_x-1|^2+x^2\rho_\mu \left|\frac{r}{x}-1\right|^2\right)dx \label{7.27}
\end{align}
for $|w|,|(xw)_x|\leq \epsilon$ small enough. The last equality \eqref{7.27} holds by $w=\frac{r}{x}-1, (xw)_x=r_x-1$.
\end{proof}

\subsubsection{Lower-order estimates}

By the continuity arguments, we will show that for small perturbation of the Lane-Emden stars with the same mass, there is a unique global strong solution of the  problem \eqref{5.16} with boundary condition \eqref{7.31}.

Assume that for any fixed $T>0$, there exist suitably small  constants $\epsilon_1,\epsilon_2>0$ such that
\begin{align*}
&\left|\frac{r}{x}-1\right|\leq\sup_{(x,t)\in [0,R_\mu]\times[0,T]}\left|r_x-1\right|\leq \epsilon_1,\\
&\left|\frac{v}{x}\right|\leq\sup_{(x,t)\in [0,R_\mu]\times[0,T]}|v_x|\leq\epsilon_2.
\end{align*}
Denote $\|\cdot\|^2_2:=\int^{R_\mu}_0 |\cdot|^2dx$, $\|\cdot\|_\infty:=\sup_{x\in[0,R_\mu]}|\cdot|$.
\begin{lemma}\label{lem7.1}
For $0\leq t\leq T$, we have
\begin{align*}
&\|x\rho_\mu^{\frac{1}{2}}v\|_2^2+\left\|x(P(\rho_\mu))^{\frac{1}{2}}\left(\frac{r}{x}-1,r_x-1\right)\right\|^2_2+\sigma\int^t_0\|(v,xv_x)\|^2_2ds\\
\lesssim& \left(\|x\rho_\mu^{\frac{1}{2}}v\|^2_2+\left\|x(P(\rho_\mu))^{\frac{1}{2}}\left(\frac{r}{x}-1,r_x-1\right)\right\|^2_2\right)\Bigg|_{t=0},
\end{align*}
where $\sigma=\min\left\{\frac{2\nu_1}{3},\nu_2\right\}$.
\end{lemma}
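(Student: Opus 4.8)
The plan is to run a weighted energy estimate on the nonlinear Lagrangian equation \eqref{5.16}, extracting the quadratic form $\langle L_{\mu}\cdot,\cdot\rangle_X$ out of the pressure--gravity terms and then invoking its positivity (the displayed inequality established just before the lemma) precisely where \cite{LTXZZH20162} used the pointwise structure of the polytrope. Throughout I assume the a priori bounds $|\frac rx-1|,|r_x-1|\le\epsilon_1$ and $|\frac vx|,|v_x|\le\epsilon_2$ stated before the lemma.

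\textbf{Step 1 (basic energy identity).} I would multiply \eqref{5.16} by $r^2v$ and integrate over $x\in(0,R_\mu)$. The inertial term gives $\frac12\frac{d}{dt}\|x\rho_\mu^{1/2}v\|_2^2$. The viscous term is integrated by parts once; using the decomposition $\nu\big(\frac{(r^2v)_x}{r^2r_x}\big)_x=B_x+4\nu_1(v/r)_x$ introduced after \eqref{7.28} and the boundary conditions \eqref{7.31} ($v(0,t)=0$, $B(R_\mu,t)=0$), the surface term at $x=R_\mu$ recombines with part of the bulk integral into the manifestly nonnegative physical dissipation
\begin{align*}
\mathcal{D}(t):=\nu_2\int_0^{R_\mu}\frac{|(r^2v)_x|^2}{r^2r_x}\,dx+\frac{4\nu_1}{3}\int_0^{R_\mu}\frac{r^4|(v/r)_x|^2}{r_x}\,dx\ge0.
\end{align*}
For the pressure--gravity term on the right of \eqref{5.16}, integrating by parts and using $r^2r_x=x^2\rho_\mu/f$, $\partial_t(r^2r_x)=(r^2v)_x$ and $\partial_xP(\rho_\mu)=-\frac{\rho_\mu}{x^2}m(x)$ with $m(x):=4\pi\int_0^x\rho_\mu s^2\,ds$ time-independent, I recognize it as $\frac{d}{dt}\mathcal{W}(r(\cdot,t))$, where $\mathcal{W}$ is the internal-plus-gravitational energy --- a functional of $r(\cdot,t)$ alone thanks to the Lagrangian mass constraint. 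Hence
\begin{align*}
\frac{d}{dt}\Big[\tfrac12\|x\rho_\mu^{1/2}v\|_2^2+\mathcal{W}(r)\Big]+\mathcal{D}(t)=0.
\end{align*}

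\textbf{Step 2 (expansion of the potential energy).} The equilibrium $r=x$ is a critical point of $\mathcal{W}$ whose second variation is $\langle L_{\mu}\cdot,\cdot\rangle_X$ --- this is how $L_\mu$ in \eqref{defn-L-mu} arises, and \eqref{5.52} records $\langle L_\mu(xw),(xw)\rangle_X$ with $w:=\frac rx-1$ (so $(xw)_x=r_x-1$). Taylor-expanding $\mathcal{W}$ about $r=x$ and using the vacuum rate $\rho_\mu\sim(R_\mu-x)^{1/(\gamma-1)}$ near $x=R_\mu$, the cubic-and-higher remainder is bounded by $\int_0^{R_\mu}x^2P(\rho_\mu)(|w|+|(xw)_x|)^3dx$, hence by $\int_0^{R_\mu}x^2P(\rho_\mu)\,o(|w|^2+|(xw)_x|^2)dx$ under $|w|,|xw_x|\le\epsilon_1$. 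Feeding this into the positivity inequality displayed just before the lemma, and using the two-sided comparison $\langle L_\mu(xw),(xw)\rangle_X\sim\|xP(\rho_\mu)^{1/2}(\frac rx-1,r_x-1)\|_2^2$ (lower bound \eqref{5.37}, upper bound from the $I_{Li}$- and $A_i^2$-estimates above, Lemma \ref{lem3.1} absorbing the $\rho_\mu$-weighted $w^2$ term near $x=R_\mu$), I get for $\epsilon_1$ small
\begin{align*}
\tfrac14\langle L_\mu(xw),(xw)\rangle_X\le\mathcal{W}(r)-\mathcal{W}(x)\le\langle L_\mu(xw),(xw)\rangle_X,
\end{align*}
so that $\mathcal{W}(r)-\mathcal{W}(x)\sim\|xP(\rho_\mu)^{1/2}(\frac rx-1,r_x-1)\|_2^2$.

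\textbf{Step 3 (dissipation lower bound and conclusion).} Since $r\approx x$, $r_x\approx1$, $\mathcal{D}(t)$ is a small perturbation of $\int_0^{R_\mu}\big(\nu_2(2v+xv_x)^2+\frac{4\nu_1}{3}(xv_x-v)^2\big)dx$, a positive-definite quadratic form in $(v,xv_x)$, so $\mathcal{D}(t)\gtrsim\sigma\|v,xv_x\|_2^2$ with $\sigma=\min\{\frac{2\nu_1}{3},\nu_2\}$. Integrating the Step~1 identity over $[0,t]$ and bounding the left side below by Steps~2--3 and the right side above by Step~2 at $t=0$ gives exactly the asserted estimate. The point I expect to be hardest is the remainder bound in Step~2 --- showing that every cubic or higher term from expanding the internal-energy density and the gravitational part carries the weight $x^2P(\rho_\mu)$ (and no worse) near the vacuum boundary, which needs careful tracking of the degeneracy exponents of $P,P',P''$ together with Hardy's inequality; a secondary technical point is the boundary bookkeeping in Step~1, i.e.\ seeing that the viscous surface term plus the stress-free condition $B(R_\mu,t)=0$ reassemble into the nonnegative $\mathcal{D}(t)$.
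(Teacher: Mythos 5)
Your proposal is correct and follows essentially the same route as the paper: the paper multiplies the $w$-form of the equation \eqref{7.46} by $(x+xw)^2xw_t=r^2v$ (your multiplier), uses the stress decomposition $B_x+4\nu_1(v/r)_x$ with the boundary conditions \eqref{7.31} to produce the nonnegative dissipation, Taylor-expands the pressure--gravity work so that the time-differentiated energy is the quadratic form \eqref{5.52} plus a remainder $Q_1=o\big(x^2P(\rho_\mu)(|w|^2+|(xw)_x|^2)\big)$, and then invokes the positivity argument of subsection 4.2.1 before integrating in time. Your packaging of the pressure--gravity terms as an exact functional $\mathcal{W}(r)$ and expanding it is just a more explicit statement of the paper's step \eqref{7.48}, so the two proofs coincide in substance.
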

\begin{proof}
By \eqref{5.19}, \eqref{5.16} becomes
\begin{align}
\notag&\rho_\mu\frac{1}{(1+w)^2}xw_{tt}+\left(P\left(\frac{\rho_\mu}{(1+w)^2(1+w+xw_x)}\right)\right)_x-(P(\rho_\mu))_x\frac{1}{(1+w)^4}\\
=&\nu\left(\frac{((x+xw)^2xw_t)_x}{(x+xw)^2(1+w+xw_x)}\right)_x.\label{7.46}
\end{align}
Multiplying \eqref{7.46} by $(x+xw)^2xw_t$, integrating from $0$ to $R_\mu$ and using  Taylor expansion, we have
\begin{align}
\notag&\frac{1}{2}\frac{d}{dt}\int_0^{R_\mu}  \left( x^4\rho_\mu w^2_t+x^4P'(\rho_\mu)\rho_\mu w^2_x+x^3((P(\rho_\mu))_x-3(P'(\rho_\mu))_x\rho_\mu)w^2+x^2Q_1(w,(xw)_x)\right)dx\\
=&-\int_0^{R_\mu}   \left(B(x,t)((x+xw)^2xw_t)_x+4\nu_1\left(\frac{xw_t}{x+xw}\right)_x(x+xw)^2xw_t\right)dx,\label{7.48}
\end{align}
where $B(x,t)$ is defined by \eqref{B}. Here, the Taylor rest term $Q_1(w,(xw)_x)$ is bounded by
\begin{align*}
Q_1(w,(xw)_x)\lesssim&\left(P(\rho_\mu)+\rho_\mu P'(\rho_\mu)+\frac{\rho_\mu^4}{\tilde{\rho}^4}\left|\tilde{\rho}^2P''(\tilde{\rho})-4\tilde{\rho}P'(\tilde{\rho})+6P(\tilde{\rho})\right|\right)\left(|w|^3+|(xw)_x|^3\right)\\
\lesssim&\epsilon_1 P(\rho_\mu)\left(|w|^2+|(xw)_x|^2\right),
\end{align*}
where $\tilde{\rho}=\rho_\mu+\rho_\mu\xi_1\left(\frac{1}{(1+w)^2((xw)_x+1)}-1\right)$ with some constant $\xi_1\in(0,1)$.

Integrating \eqref{7.48} from $0$ to $t$, by Lemma \ref{lemL}, there holds
\begin{align*}
&\int_0^{R_\mu}   x^4\rho_\mu w^2_tdx+\int_0^{R_\mu}\left(  P'(\rho_\mu )\rho_\mu x^2|(xw)_x|^2+x^2\rho_\mu |w|^2\right)dx\\
&+\sigma\int^t_0\left( \|xw_t\|^2_2+\|xw_t+x^2w_{tx}\|^2_2\right)ds\\
\lesssim&\left(\|x^2\rho_\mu^{\frac{1}{2}}w_t\|_2^2+\|x(P(\rho_\mu))^{\frac{1}{2}}(w,w+xw_x)\|_2^2\right)\Bigg|_{t=0},
\end{align*}
where $\sigma=\min\left\{\frac{2\nu_1}{3},\nu_2\right\}$.
 The proof is completed by \eqref{5.19}.
\end{proof}

\begin{lemma}\label{lem7.2}
For $0\leq t\leq T$, we have
\begin{align}
\notag&\sigma\left\|x\left(\frac{r}{x}-1,r_x-1\right)\right\|^2_2+\int^t_0\left\|x(P(\rho_\mu))^\frac{1}{2}\left(\frac{r}{x}-1,r_x-1\right)\right\|_2^2ds\\
\lesssim&\left(\left\|x\left(\frac{r}{x}-1,r_x-1\right)\right\|_2^2+\|x\rho_\mu^\frac{1}{2}v\|_2^2\right)\Bigg|_{t=0}\label{7.52}
\end{align}
and
\begin{align}
\notag&(1+t)\|x\rho_\mu^\frac{1}{2}v\|_2^2+(1+t)\left\|x(P(\rho_\mu))^\frac{1}{2}\left(\frac{r}{x}-1,r_x-1\right)\right\|_2^2\\
\notag&+(1+t)^\frac{2\gamma_1-2}{\gamma_1}\|r-x\|_2^2+\sigma\int^t_0(1+s)\|(v,xv_x)\|_2^2ds\\
\lesssim&\left(\left\|x\left(\frac{r}{x}-1,r_x-1\right)\right\|_2^2+\|x\rho_\mu^\frac{1}{2}v\|_2^2\right)\Bigg|_{t=0},\label{7.55}
\end{align}
where $\sigma=\min \left\{\frac{2\nu_1}{3},\nu_2\right\}$.
\end{lemma}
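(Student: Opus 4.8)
The plan is to obtain \eqref{7.52} and \eqref{7.55} from two energy identities for the Lagrangian equation \eqref{7.46}: the basic energy identity behind Lemma \ref{lem7.1} (testing with $(x+xw)^2xw_t=r^2v$), and a \emph{position} identity obtained by testing \eqref{7.46} against $x^3w$, i.e. the nonlinear analogue of \eqref{5.26}. The mechanism that makes everything work is the coercivity of $\langle L_\mu(xw),(xw)\rangle_X$ from Subsection~4.2.1: under the standing smallness hypotheses $\|r_x-1\|_\infty\le\epsilon_1$ (whence also $|r/x-1|\le\epsilon_1$) the cubic remainder $Q_1=o\big(x^2P(\rho_\mu)(|w|^2+|(xw)_x|^2)\big)$ is absorbed and, by \eqref{7.27} applied with $u=xw$ (so $xw=r-x$, $(xw)_x=r_x-1$),
\[
\langle L_\mu(xw),(xw)\rangle_X+Q_1\ \gtrsim\ \tfrac12\langle L_\mu(xw),(xw)\rangle_X\ \gtrsim\ \int\!\big(P'(\rho_\mu)\rho_\mu x^2|r_x-1|^2+\rho_\mu|r-x|^2\big)dx\ \gtrsim\ \big\|xP^{\frac12}(\rho_\mu)(r/x-1,r_x-1)\big\|_2^2,
\]
where we used $P(\rho_\mu)\sim P'(\rho_\mu)\rho_\mu\lesssim\rho_\mu$ on $[0,R_\mu]$.

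\textbf{Proof of \eqref{7.52}.} Test \eqref{7.46} against $x^3w$ and integrate over $(0,R_\mu)$, using the boundary conditions \eqref{7.31} (equivalently \eqref{5.24}). The viscous terms integrate by parts into $\frac{d}{dt}$ of a quadratic form bounded below by $c\,\sigma\,\|x(r/x-1,r_x-1)\|_2^2$ with $\sigma=\min\{2\nu_1/3,\nu_2\}$ (the loss from $\nu$ to $\sigma$ in the nonlinear setting coming from the decomposition of $B(x,t)$); the pressure and gravity terms produce $\langle L_\mu(xw),(xw)\rangle_X+Q_1$; and the inertial term produces $\frac{d}{dt}\!\int 2x^4\rho_\mu ww_t\,dx+\int x^4\rho_\mu w_t^2\,dx$. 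Integrating in $t$ from $0$ to $t$: the term $\int_0^t\!\int x^4\rho_\mu w_t^2\,dx\,ds\lesssim\int_0^t\|v\|_2^2\,ds$ is absorbed by the viscous dissipation already controlled by Lemma \ref{lem7.1}; the cross term $\int 2x^3\rho_\mu wv\,dx$ at time $t$ is split by Young's inequality into a small multiple of $\sigma\|x(r/x-1)\|_2^2(t)$, moved to the left, plus $C_\sigma\|x\rho_\mu^{1/2}v\|_2^2(t)$, controlled by the right side again via Lemma \ref{lem7.1}; and $\int_0^t\!\big(\langle L_\mu(xw),(xw)\rangle_X+Q_1\big)ds\gtrsim\int_0^t\|xP^{1/2}(\rho_\mu)(r/x-1,r_x-1)\|_2^2\,ds$ by the coercivity above. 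At $t=0$ every boundary term is dominated by $\|x(r/x-1,r_x-1)\|_2^2(0)+\|x\rho_\mu^{1/2}v\|_2^2(0)$. Collecting the terms gives \eqref{7.52}.

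\textbf{Proof of \eqref{7.55}.} Multiply the basic energy identity and the position identity by $(1+t)$, integrate in $t$, and combine exactly as in the linear computation \eqref{5.50}–\eqref{5.55}, but retaining $Q_1$ throughout; it is again absorbed by the positive form. The two ``error'' terms appearing on the right, $\int_0^t\|v\|_2^2\,ds$ and $\int_0^t\|xP^{1/2}(\rho_\mu)(r/x-1,r_x-1)\|_2^2\,ds$, are controlled by Lemma \ref{lem7.1} and by \eqref{7.52} respectively. This yields the asserted bound on $(1+t)\big[\|x\rho_\mu^{1/2}v\|_2^2+\|xP^{1/2}(\rho_\mu)(r/x-1,r_x-1)\|_2^2\big]+\sigma\int_0^t(1+s)\|(v,xv_x)\|_2^2\,ds$, and in particular $(1+t)\int\rho_\mu|r-x|^2\,dx\lesssim\big(\|x(r/x-1,r_x-1)\|_2^2+\|x\rho_\mu^{1/2}v\|_2^2\big)\big|_{t=0}$, since $\langle L_\mu(xw),(xw)\rangle_X\gtrsim\int\rho_\mu|xw|^2\,dx$. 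Finally, the unweighted decaying term $(1+t)^{(2\gamma-2)/\gamma}\|r-x\|_2^2$ is obtained by upgrading this $\rho_\mu$-weighted $O((1+t)^{-1})$ bound: since $\rho_\mu(x)\sim(R_\mu-x)^{1/(\gamma-1)}$ near the vacuum interface, one splits $(0,R_\mu)$ into the interior (where $\rho_\mu$ is bounded below, so weighted and unweighted norms are comparable) and a boundary layer, on which one interpolates the weighted $L^2$-bounds for $r-x$ and $r_x-1$ against the uniform smallness of $r/x-1$ and $r_x-1$ using Hardy's inequality (Lemma \ref{lem3.1}) and then optimizes the layer width; matching the degeneracy exponent $1/(\gamma-1)$ with the parabolic scaling produces precisely the exponent $(2\gamma-2)/\gamma$. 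This is the weighted-interpolation argument near the vacuum boundary of \cite{LTXZZH20162}.

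\textbf{Main obstacle.} The delicate point is this last step — trading the $\rho_\mu$- and $P(\rho_\mu)$-weights, which degenerate at the free boundary, for a quantitative time-decay rate of the unweighted $L^2$ norm of $r-x$. It needs the full strength of the weighted Hardy/Gagliardo–Nirenberg machinery near the vacuum interface and the compatibility of the degeneracy exponent $1/(\gamma-1)$ of $\rho_\mu$ with the viscous dissipation. A secondary point requiring care is bounding the cubic remainder $Q_1$ uniformly up to the boundary so that it is genuinely absorbed by $\langle L_\mu(xw),(xw)\rangle_X$ rather than merely by its interior part; this is where the assumptions $(P_1)$–$(P_2)$ and the continuity-argument smallness of $\epsilon_1,\epsilon_2$ enter.
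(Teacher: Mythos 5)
Your plan follows essentially the same route as the paper: a position-multiplier identity for \eqref{7.46}/\eqref{7.28} whose pressure-gravity part is recognized, after Taylor expansion, as $\langle L_{\mu}(xw),(xw)\rangle_X$ plus a cubic remainder $Q_1$ absorbed through the coercivity of Subsection 4.2.1 (i.e. \eqref{7.27}), combined with the energy identity \eqref{7.48} of Lemma \ref{lem7.1} to give \eqref{7.52}, and then the $(1+t)$-weighted versions of the same identities for \eqref{7.55}; this is exactly what the paper does, deferring the remaining bookkeeping to Lemma 3.6 of \cite{LTXZZH20162}. The only structural difference is your multiplier $x^3w$ in place of the paper's $r^3-x^3$ (equivalently $x^3((w+1)^2-1)$); both extract the same quadratic form, the nonlinear multiplier merely making the identity \eqref{7.59} exact, whereas with $x^3w$ the time-dependent factors $r_x^{-1}$, $r^{-1}$ in the viscous and inertial terms produce additional cubic commutators that must be (and can be) absorbed under the bootstrap smallness. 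This is a cosmetic variation, not a different method.

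The one step where your sketch, taken literally, does not deliver the claim is the term $(1+t)^{\frac{2\gamma-2}{\gamma}}\|r-x\|_2^2$. Interpolating the $(1+t)^{-1}$-decaying weighted norms against the uniform ($L^\infty$) smallness of $\frac{r}{x}-1$, $r_x-1$ and optimizing the layer width $\delta$ yields only $(1+t)^{-\frac{\gamma-1}{2\gamma-1}}$ (weight $\rho_\mu^{\gamma}$), $(1+t)^{-\frac{\gamma-1}{\gamma}}$ (weight $\rho_\mu$), or at best $(1+t)^{-(\gamma-1)}$ after one Hardy improvement, each strictly weaker than $(1+t)^{-\frac{2(\gamma-1)}{\gamma}}$ for $\gamma<2$. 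To reach the stated exponent one needs a better boundary-layer estimate: for instance, control $r-x$ inside the layer of width $\delta$ pointwise by the fundamental theorem of calculus together with the time-uniform unweighted $L^2$ bound on $r_x-1$ supplied by \eqref{7.52} (plus an averaged value at an interior point controlled by the Hardy-improved weighted decay), so that the layer contributes $O(\delta^2)$ rather than $O(\delta)$; optimizing against the interior term then forces $\delta\sim(1+t)^{-\frac{\gamma-1}{\gamma}}$ and gives exactly the rate $\frac{2\gamma-2}{\gamma}$. Since the paper itself invokes Lemma 3.6 of \cite{LTXZZH20162} wholesale at this point, this is a gap in precision rather than in strategy, but as written your optimization would not close the claimed exponent.
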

\begin{proof}
 To prove \eqref{7.52}, we  multiply \eqref{5.16} by $r^3-x^3$ or multiply \eqref{7.46} by $x^3((w+1)^2-1)$. We have
\begin{align*}
&\int_0^{R_\mu} \left(  P(\rho_\mu)\left(\frac{x^4}{r^4}(r^3-x^3)\right)_x-P\left(\frac{x^2\rho_\mu}{r^2r_x}\right)(r^3-x^3)_x\right)dx\\
=&-\int_0^{R_\mu}\left(B(x,t)(r^3-x^3)_x-4\nu_1\left(\frac{v}{r}\right)_x(r^3-x^3)\right)dx-\int_0^{R_\mu} x^3\rho_\mu v_t\left(\frac{r}{x}-\frac{x^2}{r^2}\right)dx.
\end{align*}
Using \eqref{5.19} and the Taylor expansion, there holds
\begin{align}
&\int_0^{R_\mu}  \left( P(\rho_\mu)\left(\frac{x^4}{r^4}(r^3-x^3)\right)_x-P\left(\frac{x^2\rho_\mu}{r^2r_x}\right)(r^3-x^3)_x\right)dx\nonumber\\
=&6\int_0^{R_\mu}\Big((-4P(\rho_\mu)+4\rho_\mu P'(\rho_\mu))x^2w^2+2x^2(-4P(\rho_\mu)+2\rho_\mu P'(\rho_\mu))w(w+xw_x)\nonumber\\
&+\rho_\mu P'(\rho_\mu)x^2(w+xw_x)^2\Big)dx+\int_0^{R_\mu} x^2  Q_2(w,(xw)_x)dx\nonumber\\
=&6\int_0^{R_\mu} \left(x^4P'(\rho_\mu)\rho_\mu w^2_x-(3P'(\rho_\mu)\rho_\mu-4P(\rho_\mu))_xx^3w^2\right)dx+\int_0^{R_\mu} x^2Q_2(w,(xw)_x)dx\nonumber\\
=&6\langle L_\mu(xw),(xw)\rangle_{X_\mu}+\int_0^{R_\mu} x^2  Q_2(w,(xw)_x)dx,\label{7.59}
\end{align}
where the Taylor rest term $Q_2(w,(xw)_x)$  can be bounded by
\begin{align*}
Q_2(w,(xw)_x)\lesssim& \left(P(\rho_\mu)+\rho_\mu P'(\rho_\mu)+\rho^2_\mu|P''(\tilde{\rho})| \right)\left(|w|^3+|(xw)_x|^3\right)\\
\lesssim& \epsilon_1P(\rho_\mu)\left(|w|^2+|(xw)_x|^2\right),
\end{align*}
where $\tilde{\rho}=\rho_\mu+\rho_\mu\xi_2\left(\frac{1}{(1+w)^2((xw)_x+1)}-1\right)$ for some constant $\xi_2\in(0,1)$.
Therefore,  \eqref{7.52} holds by Lemma \ref{lemL} and  the similar arguments in Lemma 3.6 of \cite{LTXZZH20162}.

Define
\begin{align*}
\eta(x,t):=  x^4\rho_\mu w^2_t+x^4P'(\rho_\mu)\rho_\mu w^2_x+x^3((P(\rho_\mu))_x-3(P'(\rho_\mu))_x\rho_\mu)w^2+x^2Q_2(w,(xw)_x).
\end{align*}
By the arguments in Lemma \ref{lemL}, we have
\begin{align*}
\int_0^{R_\mu}  \eta(x,t)dx \sim\int_0^{R_\mu}  x^2\rho_\mu|xw_t|^2dx+\int_0^{R_\mu}\left(  P'(\rho_\mu )\rho_\mu x^2|(xw)_x|^2+x^2\rho_\mu |w|^2\right)dx.
\end{align*}
Hence,
\begin{align*}
\int^t_0\int_0^{R_\mu}  \eta(x,s)dxds\lesssim\int^t_0\int_0^{R_\mu}   v^2dxds+\int^t_0\int_0^{R_\mu}  x^2P(\rho_\mu)\left(\left(\frac{r}{x}-1\right)^2+(r_x-1)^2\right)dxds.
\end{align*}
Multiplying \eqref{7.48} by $(1+t)$ and integrating with respect to time, we obtain \eqref{7.55} by the similar arguments in  Lemma 3.6 of \cite{LTXZZH20162}.
\end{proof}

\begin{lemma}
Let $r_2<R_\mu$ be close to $R_\mu$, there holds
\begin{align*}
\left|\frac{r}{x}-1\right|+|r_x-1|\lesssim\left(\|x\rho_\mu^{\frac{1}{2}}v\|_2+\left\|x\left(\frac{r}{x}-1,r_x-1\right)\right\|_2+\|r_x-1\|_{L^\infty[r_1,R_\mu]}\right)\Bigg|_{t=0}
\end{align*}
for $(x,t)\in [r_2,R_\mu]\times[0,T]$.
\end{lemma}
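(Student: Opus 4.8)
The plan is to upgrade the weighted $L^{2}$ bounds of Lemmas \ref{lem7.1}--\ref{lem7.2} to a pointwise estimate on the boundary shell $[r_{1},R_{\mu}]$ by exploiting the momentum equation together with the known degeneracy rates $\rho_{\mu}(x)\sim(R_{\mu}-x)^{1/(\gamma-1)}$ and $P(\rho_{\mu})(x)\sim(R_{\mu}-x)^{\gamma/(\gamma-1)}$ near $x=R_{\mu}$. Since $\partial_{t}r=v$, differentiating in $x$ gives $\partial_{t}(r_{x}-1)=v_{x}$, hence
\begin{align*}
r_{x}(x,t)-1=\big(r_{x}(x,0)-1\big)+\int_{0}^{t} v_{x}(x,s)\,ds ,
\end{align*}
so it suffices to bound $\int_{0}^{t}|v_{x}(x,s)|\,ds$ pointwise in $x\in[r_{1},R_{\mu}]$ by the claimed right-hand side; the first term is already $\le\|r_{x}-1\|_{L^{\infty}([r_{1},R_{\mu}])}\big|_{t=0}$.

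To control $\int_{0}^{t} v_{x}\,ds$ we would integrate the momentum equation \eqref{5.16} in $x$ from the free boundary $R_{\mu}$ inward. At $x=R_{\mu}$ the viscous stress $\nu\frac{(r^{2}v)_{x}}{r^{2}r_{x}}=\nu\big(\frac{2v}{r}+\frac{v_{x}}{r_{x}}\big)$ is pinned down by the boundary condition \eqref{7.31} (equivalently the linearized form \eqref{5.24}), and the pressure contributions $P\big(\frac{x^{2}\rho_{\mu}}{r^{2}r_{x}}\big)$ and $P(\rho_{\mu})$ vanish there since $\rho_{\mu}(R_{\mu})=0$, the small factor $(R_{\mu}-x)^{\gamma/(\gamma-1)}$ persisting on all of $[r_{1},R_{\mu}]$. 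This expresses $\nu\frac{(r^{2}v)_{x}}{r^{2}r_{x}}(x,t)$ as $\int_{x}^{R_{\mu}}\rho_{\mu}(y/r)^{2} v_{t}\,dy$ plus controlled pressure and boundary terms. A further integration in time turns $\int_{0}^{t}\!\int_{x}^{R_{\mu}}\rho_{\mu} v_{t}$ into $-\int_{x}^{R_{\mu}}\rho_{\mu}(y/r)^{2} v(y,t)\,dy$ plus the analogous initial term plus a lower-order correction from the time dependence of $r$; since $x\ge r_{1}>0$, Cauchy--Schwarz and Lemma \ref{lem7.1} give $\big|\int_{x}^{R_{\mu}}\rho_{\mu} v(y,t)\,dy\big|\le C\big(\int_{x}^{R_{\mu}}\rho_{\mu}\,dy\big)^{1/2}\|x\rho_{\mu}^{1/2}v\|_{2}\le C\|x\rho_{\mu}^{1/2}v\|_{2}\big|_{t=0}$, while $\int_{0}^{t}\frac{v}{r}\,ds=\log\frac{1+w(x,t)}{1+w(x,0)}$ is absorbed using the continuity-argument smallness $|w|,|r_{x}-1|\le\epsilon_{1}$. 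Solving for $\int_{0}^{t} v_{x}\,ds$ (legitimate because $r_{x}$ stays near $1$) then gives the pointwise bound for $r_{x}-1$.

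The bound on $w=r/x-1$ follows from that on $r_{x}-1$: since $w_{x}=\frac{1}{x}\big((r_{x}-1)-w\big)$, for $x\in[r_{1},R_{\mu}]$ we have $|w(x,t)|\le|w(R_{\mu},t)|+\int_{x}^{R_{\mu}}\frac{1}{y}\big(|r_{y}-1|+|w(y,t)|\big)\,dy$, and Gronwall in the radial variable yields $\sup_{[r_{1},R_{\mu}]}|w|\lesssim|w(R_{\mu},t)|+\sup_{[r_{1},R_{\mu}]}|r_{x}-1|$, with the anchor $w(R_{\mu},t)=R(t)/R_{\mu}-1$ controlled by integrating the boundary condition \eqref{5.24} in time together with the already-established estimates. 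The main obstacle is getting the weights to match in the Cauchy--Schwarz steps: one cannot estimate $\int_{x}^{R_{\mu}}|w_{y}|\,dy$ directly against the $P(\rho_{\mu})^{1/2}$-weighted energy of Lemma \ref{lem7.2}, because the complementary weight $(R_{\mu}-y)^{-\gamma/(\gamma-1)}$ fails to be integrable for every $\gamma\in(\frac65,2)$; this is why one must route the argument through the momentum equation with the ``good'' flux combination $\rho_{\mu} P'(\rho_{\mu})x^{4} w_{x}$ (equivalently the viscous stress), tracking carefully how many integrations by parts the weights can afford, which is most delicate when $\gamma$ is close to $\frac65$ and the degeneracy exponent $1/(\gamma-1)$ is large. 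This is the analogue, in our general setting, of the boundary pointwise estimates in \cite{LTXZZH20162}.
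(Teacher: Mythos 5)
Your skeleton coincides with the paper's (which follows Lemma 3.6 of \cite{LTXZZH20162}): integrate \eqref{7.28} over $[x,R_\mu]$ using the stress-free condition \eqref{7.31}, integrate in time so that the viscous flux becomes a time derivative of logarithms of $r_x$ and $r/x$, and control $\int_x^{R_\mu}\rho_\mu(y/r)^2v_t\,dy$ and its companions by Cauchy--Schwarz together with Lemmas \ref{lem7.1}--\ref{lem7.2}; your remark about the non-integrable complementary weight $(R_\mu-y)^{-\gamma/(\gamma-1)}$ correctly explains why a direct fundamental-theorem argument against the $P(\rho_\mu)^{1/2}$-weighted energy fails. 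The genuine gap is your treatment of the nonlinear pressure term $P\bigl(\frac{x^2\rho_\mu}{r^2r_x}\bigr)$, which you dismiss as ``controlled'' because the factor $(R_\mu-x)^{\gamma/(\gamma-1)}$ is small on $[r_1,R_\mu]$. After the time integration this term appears as $\int_0^t P\bigl(\frac{x^2\rho_\mu}{r^2r_x}\bigr)ds$, and pointwise smallness only gives a bound of order $t\,(R_\mu-r_1)^{\gamma/(\gamma-1)}$, i.e.\ a constant growing linearly in $T$. That is fatal: the lemma must hold with $C$ independent of $T$, because it is used to close the continuity/bootstrap argument behind Theorem \ref{thm7.7} and the subsequent decay estimates. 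The paper's proof handles precisely this point by setting $\mathfrak{u}=\int_0^t P\bigl(\frac{x^2\rho_\mu}{r^2r_x}\bigr)ds$ and exploiting that $P\bigl(\frac{x^2\rho_\mu}{r^2r_x}\bigr)\sim\bigl(\frac{x^2\rho_\mu}{r^2r_x}\bigr)^{\gamma}$ is a monotone function of the very quantity whose logarithm the viscous term differentiates in time: the integrated identity then becomes a solvable nonlinear ODE in $t$ in which the pressure acts as damping, and the integrating factor $e^{\frac{\gamma}{\nu}\mathfrak{u}}$ (the displayed relation $e^{\frac{\gamma}{\nu}\mathfrak{u}}\sim 1+\int_0^t\frac{\gamma}{\nu}(\cdots)^{\gamma}e^{-\frac{\gamma}{\nu}\mathfrak{L}}ds$) yields bounds uniform in $t$. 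This sign/monotonicity step is the heart of the proof and is exactly what your proposal omits.

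A secondary problem is the anchor for $w=\frac{r}{x}-1$. Integrating the boundary condition in time only produces the relation $\nu\ln r_x(R_\mu,t)+(2\nu_2-\tfrac{4}{3}\nu_1)\ln\frac{r(R_\mu,t)}{R_\mu}=\mathrm{const}$, which ties the two boundary quantities to each other (and tells you nothing about $r(R_\mu,t)$ at all when $2\nu_2=\tfrac{4}{3}\nu_1$); it does not bound $w(R_\mu,t)$ by the initial data, so your Gronwall-in-$x$ step has no controlled starting point. The detour is unnecessary: Lemma \ref{lem7.2}, estimate \eqref{7.52}, gives a $t$-uniform bound on $\|x(\frac{r}{x}-1,r_x-1)\|_2$, and for $x\in[r_1,R_\mu]$ one obtains $|\frac{r}{x}-1|$ pointwise from it directly, e.g.\ by comparing $r(x,t)-x$ with its value at a mean-value point $x^*\in[\tfrac{r_1}{2},r_1]$ and estimating $\int_{x^*}^{x}(r_y-1)\,dy$ by Cauchy--Schwarz against the unweighted $x^2$-measure; no degenerate weight is needed for this step, so the obstruction you flagged does not arise there.
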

\begin{proof}
Consider $x\in [r_2,R_\mu]$ for $0<r_2<R_\mu$ close to $R_\mu$. Integrating \eqref{5.16} over $[x,R_\mu]$, we have
\begin{align*}
\int_x^{R_\mu}\rho_\mu\left(\frac{y}{r}\right)^2v_tdy-P\left(\frac{x^2\rho_\mu}{r^2r_x}\right)-\int^{R_\mu}_x\frac{y^4}{r^4}(P(\rho_\mu))_ydy=-B(x,t)+4\nu_1\int^{R_\mu}_x\left(\frac{v}{r}\right)_ydy.
\end{align*}
Similar to the arguments in Lemma 3.6 of \cite{LTXZZH20162}, the only small differences are
\begin{align*}
\mathfrak{u}:=\int^t_0P\left(\frac{x^2\rho_\mu}{r^2r_x}\right)ds\sim\int^t_0\left(\frac{x^2\rho_\mu}{r^2r_x}\right)^{\gamma_1} ds
\end{align*}
and
\begin{align*}
\mathfrak{u}_t=P\left(\frac{x^2\rho_\mu}{r^2r_x}\right)\sim \left(\frac{x^2\rho_\mu}{r^2r_x}\right)^{\gamma_1}.
\end{align*}
Fortunately, we also have
\begin{align*}
e^{\frac{\gamma_1}{\nu}\mathfrak{u}}\sim 1+\int^t_0 \frac{\gamma_1}{\nu}\left(\frac{x^2}{r^2}\frac{\rho_\mu}{r_x|_{t=0}}\right)^{\gamma_1} e^{-\frac{\gamma_1}{\nu}\mathfrak{L}}ds,
\end{align*}
where the meaning of $\mathfrak{L}$ is  the same as that in \cite{LTXZZH20162}. By Lemma \ref{lem7.1} and \ref{lem7.2}, it completes the proof.
\end{proof}

\begin{lemma}
There holds
\begin{align*}
&(1+t)\|(x\rho_\mu^\frac{1}{2}v_t,v,xv_x)(\cdot,t)\|^2_2+\int^t_0(1+s)\|(v_s,xv_{sx})(\cdot,s)\|^2_2ds\\
\lesssim&\left(\left\|\left(x\rho_\mu^\frac{1}{2}v_t,v,xv_x\right)\right\|_2^2+\left\|x\left(\frac{r}{x}-1,r_x-1\right)\right\|^2_2\right)\Bigg|_{t=0}.
\end{align*}
\end{lemma}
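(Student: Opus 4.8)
The plan is to repeat the weighted energy scheme of Lemmas \ref{lem7.1} and \ref{lem7.2} one time--derivative higher. First I would differentiate the Lagrangian momentum equation \eqref{7.46} (equivalently \eqref{7.28}) in $t$. Because $r=x(1+w)$ and $r_x=1+w+xw_x$ enter the pressure and viscous terms nonlinearly, the $t$--differentiated equation has leading part
\[
\rho_\mu\tfrac{1}{(1+w)^2}\,xw_{ttt}+\Big[P'\!\big(\tfrac{\rho_\mu}{(1+w)^2(1+w+xw_x)}\big)\big(\text{linear part in }w_t\big)\Big]_x-\nu\Big[\tfrac{((x+xw)^2xw_{tt})_x}{(x+xw)^2(1+w+xw_x)}\Big]_x ,
\]
plus a remainder $\mathcal R_0$ consisting of the terms in which at least one $t$--derivative falls on the nonlinear factors $1/(1+w)^2$, $1/(1+w+xw_x)$, etc.; schematically $\mathcal R_0$ is a sum of products of $w_t$ or $(xw_t)_x$ with first-- and second--order spatial derivatives of $w$, each carrying a factor that is small once the a priori bounds $|r_x-1|\le\epsilon_1$, $|v_x|\le\epsilon_2$ hold.

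Next I would multiply the $t$--differentiated equation by $(x+xw)^2xw_{tt}$ and integrate over $[0,R_\mu]$, discarding boundary contributions by the boundary conditions \eqref{5.24} and \eqref{7.31} exactly as in the derivation of \eqref{7.48}. After Taylor expanding the pressure term this yields
\[
\tfrac12\tfrac{d}{dt}\!\int\!\Big(x^4\rho_\mu w_{tt}^2+x^4P'(\rho_\mu)\rho_\mu w_{tx}^2+x^3[P(\rho_\mu)_x-3(P'(\rho_\mu))_x\rho_\mu]w_t^2+Q_2\Big)dx+\sigma\!\int\! x^2\big[(xw_{tt}+x^2w_{ttx})^2+2(xw_{tt})^2\big]dx=\mathcal R ,
\]
with $Q_2=o\big(x^2P(\rho_\mu)(w_t^2+|(xw_t)_x|^2)\big)$ and $\mathcal R$ collecting $\mathcal R_0$ together with the single--$t$--derivative pressure remainders. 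By the coercivity estimate of subsection~4.2.1 applied with $u=xw_t$ (legitimate since $n^-(L_\mu)=0$, see \eqref{7.1} and \eqref{7.27}), the bracket under $\tfrac{d}{dt}$ is comparable to $\int\big(x^4\rho_\mu w_{tt}^2+P'(\rho_\mu)\rho_\mu x^2|(xw_t)_x|^2+x^2\rho_\mu w_t^2\big)dx$ and already absorbs $Q_2$ and those pressure remainders. The essential new work is to estimate $\mathcal R_0$: integrating by parts to relocate the extra spatial derivatives, using Hardy's inequality (Lemma \ref{lem3.1}) near $x=0$ and near the vacuum interface $x=R_\mu$ where $\rho_\mu$ degenerates, and using the smallness of $\epsilon_1,\epsilon_2$, I expect each piece of $\mathcal R_0$ to be bounded by $\epsilon\,\sigma\int(v_t^2+x^2v_{tx}^2)dx$ plus $\epsilon\,\langle L_\mu(xw_t),(xw_t)\rangle_X$ plus quantities already controlled by Lemmas \ref{lem7.1}--\ref{lem7.2}; choosing $\epsilon_1,\epsilon_2$ small then lets these be absorbed on the left. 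Integrating in $t$ and rewriting $v=xw_t$, $v_t=xw_{tt}$, $v_x=w_t+xw_{tx}$ gives the un--weighted bound $\|(x\rho_\mu^{1/2}v_t,v,xv_x)(\cdot,t)\|_2^2+\sigma\int_0^t\|(v_t,xv_{tx})\|_2^2\,ds\le C(\text{initial data})$.

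To obtain the stated time decay I would then multiply this differential inequality by $(1+t)$ and integrate, exactly as in the passage from \eqref{7.48} to \eqref{7.55}: the extra term $\int_0^t\!\int\eta_2\,dx\,ds$, with $\eta_2$ the $t$--differentiated energy density (comparable to the coercive bracket above), is controlled by $\int_0^t\|v_t\|_2^2\,ds+\int_0^t\|xP^{1/2}(\rho_\mu)(w_t,(xw_t)_x)\|_2^2\,ds$; the first integral is supplied by the dissipation just obtained, the second by the weighted lower--order estimates \eqref{7.52} and \eqref{7.55}. Combining these bounds yields the claimed inequality. The main obstacle is precisely the control of the remainder $\mathcal R_0$ produced by the $t$--differentiation: handling these terms requires simultaneously exploiting the coercivity of $\langle L_\mu\cdot,\cdot\rangle_X$, the viscous dissipation, the smallness parameters $\epsilon_1,\epsilon_2$, and sharp weighted (Hardy--type) inequalities to absorb the degeneracy at the vacuum boundary $x=R_\mu$ and the coordinate singularity at $x=0$, in the spirit of the corresponding estimates in \cite{LTXZZH20162}.
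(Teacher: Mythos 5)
Your proposal follows essentially the same route as the paper: the paper differentiates the momentum equation \eqref{7.70} in $t$ to obtain \eqref{7.72}, multiplies by $v_t$ (the same energy as your multiplier $(x+xw)^2xw_{tt}=r^2v_t$), identifies the resulting energy density $\eta_1$ with $\frac{1}{2}\int x^2\rho_\mu|v_t|^2dx+\frac{1}{2}\langle L_\mu v,v\rangle_X$ via the coercivity of subsection 4.2.1 (note $v=xw_t$), absorbs the commutator terms $J_1,J_2$ using the smallness of $\epsilon_1,\epsilon_2$, and then obtains the $(1+t)$ weight exactly as you describe, deferring the remaining details to Lemma 3.8 of \cite{LTXZZH20162}. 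So your sketch is correct and matches the paper's argument in all essentials.
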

\begin{proof}
Note that
\begin{align}\label{7.70}
\rho_\mu x^2v_t+r^2\left(P\left(\frac{x^2\rho_\mu}{r^2r_x}\right)\right)_x-\frac{x^4}{r^2}(P(\rho_\mu))_x=r^2B_x+4\nu_1\left(\frac{v}{r}\right)_xr^2
\end{align}
and
\begin{align}
\notag&\rho_\mu x^2v_{tt}+2rv\left(P\left(\frac{x^2\rho_\mu}{r^2r_x}\right)\right)_x+r^2\left(P\left(\frac{x^2\rho_\mu}{r^2r_x}\right)\right)_{xt}+\frac{2x^4}{r^3}v(P(\rho_\mu))_x\\
=&r^2B_{xt}+2rvB_x+4\nu_1\left(\frac{v}{r}\right)_{xt}r^2+8\nu_1\left(\frac{v}{r}\right)_xrv.\label{7.72}
\end{align}

Multiplying \eqref{7.72} by $v_t$ and integrating from $0$ to $R_\mu$, we have
\begin{align*}
\frac{d}{dt}\int_0^{R_\mu}  \left(\frac{1}{2}\rho_\mu x^2|v_t|^2+\eta_1(x,t)dx\right)dx+\int_0^{R_\mu}  \left(B_t(r^2v_t)_x-4\nu_1r^2v_t\left(\frac{v}{r}\right)_{xt}\right)dx=:J_1+J_2,
\end{align*}
where
\begin{align*}
  \eta_1:=&\frac{1}{2}\int_0^{R_\mu} x^2\rho_\mu|v_t|^2dx+\frac{1}{2}\int_0^{R_\mu}   P'\left(\frac{x^2\rho_\mu}{r^2r_x}\right)\frac{x^2\rho_\mu}{r^2r_x}\frac{r^2}{r_x}|v_x|^2dx\\
&+\int_0^{R_\mu}   vv_x\left(4P'\left(\frac{x^2\rho_\mu}{r^2r_x}\right)\frac{x^2\rho_\mu}{rr_x}-4rP\left(\frac{x^2\rho_\mu}{r^2r_x}\right)-4\frac{x^4}{r^3}P(\rho_\mu)\right)dx\\
&+\int_0^{R_\mu}   v^2\left(4P'\left(\frac{x^2\rho_\mu}{r^2r_x}\right)\frac{x^2\rho_\mu}{r^2}-2r_xP\left(\frac{x^2\rho_\mu}{r^2r_x}\right)-2P(\rho_\mu)\left(\frac{x^4}{r^3}\right)_x\right)dx,\\
J_1:=&-\int_0^{R_\mu} \left(B(x,t)(2rvv_t)_x-8\nu_1(\frac{v}{r})_xrvv_t\right)dx,\\
J_2:=&\int_0^{R_\mu}   vv_x\left(4P'\left(\frac{x^2\rho_\mu}{r^2r_x}\right)\frac{x^2\rho_\mu}{rr_x}-4rP\left(\frac{x^2\rho_\mu}{r^2r_x}\right)-4\frac{x^4}{r^3}P(\rho_\mu)\right)_tdx\\
&+\int_0^{R_\mu}   v^2\left(4P'\left(\frac{x^2\rho_\mu}{r^2r_x}\right)\frac{x^2\rho_\mu}{r^2}-2r_xP\left(\frac{x^2\rho_\mu}{r^2r_x}\right)-2P(\rho_\mu)\left(\frac{x^4}{r^3}\right)_x\right)_tdx\\
&+\int_0^{R_\mu}   \left(P'\left(\frac{x^2\rho_\mu}{r^2r_x}\right)\frac{x^2\rho_\mu}{r^2r_x}\frac{r^2}{r_x}\right)_t|v_x|^2dx.
\end{align*}
Note that
\begin{align*}
&\langle L_{\mu}v,v\rangle_{X_\mu}\\
=&\int_0^{R_\mu} \left(  x^4P'(\rho_\mu)\rho_\mu|\left(\frac{v}{x}\right)_x|^2+x^3\left(4(P(\rho_\mu))_x-3(\rho_\mu P'(\rho_\mu))_x\right)\left(\frac{v}{x}\right)^2\right)dx\\
=&\int_0^{R_\mu} \left(  P'(\rho_\mu)\rho_\mu x^2v^2_x-(4P(\rho_\mu)-2P'(\rho_\mu)\rho_\mu)2xvv_x-(4P(\rho_\mu)-4P'(\rho_\mu)\rho_\mu)v^2\right)dx.
\end{align*}
Using Taylor expansion and Lemma \ref{lemL}, we have
\begin{align*}
\eta_1&\sim \frac{1}{2}\int_0^{R_\mu} x^2\rho_\mu|v_t|^2dx+\frac{1}{2}\langle L_\mu v,v\rangle_{X_\mu},\\
J_2&\lesssim\epsilon_1\int_0^{R_\mu} (x^2v^2_x+v^2)dx,
\end{align*}
for $\epsilon_1>0$ small enough.
The details of the rest proof are similar to Lemma 3.8 in \cite{LTXZZH20162} which completes the proof.
\end{proof}

Now, we derive the time improved decay estimates. By the similar arguments in Lemma 3.13 of \cite{LZ2019}, we have
\begin{align*}
\int_0^{R_\mu} \rho_\mu^{-\beta}\frac{\Phi''(\rho_\mu)}{x^2}|(x^2\rho_\mu u)_x|^2dx\gtrsim\int_0^{R_\mu}\rho_\mu^{-\beta}\rho_\mu|u|^2x^2dx
\end{align*}
for $0<\beta<\gamma_1-1$.
Hence, we obtain that the quadric form
$$\int_0^{R_\mu} \rho_\mu^{-\beta} \left(\Phi''(\rho_\mu)\frac{1}{x^2}|(x^3\rho_\mu w)_x|^2-4\pi x^4\rho_\mu^2w^2\right)dx$$
can absorb  the  term  $\int_0^{R_\mu} \rho_\mu^{-\beta} x^2Q_0(w,(xw)_x)dx$ with
$$Q_0(w,(xw)_x)\lesssim\epsilon_1 P(\rho_\mu )(|w|^2+|(xw)_x|^2)$$ for $|w|,|(xw)_x|\leq \epsilon_1$ small enough.

\begin{lemma}
For $\epsilon_1,\epsilon_2>0$ small enough and $\theta\in \left(0,\frac{2(\gamma_1-1)}{3\gamma_1}\right)$, there exists a constant $C_\theta$ independent of $t$ such that
\begin{align*}
&\|\rho_\mu^{\frac{\gamma_1\theta-2(\gamma_1-1)}{4}}(r-x,xr_x-x)\|_2^2+(1+t)^{\frac{\gamma_1-1}{\gamma_1}-\theta}\|xr_x-x\|_2^2+(1+t)^{\frac{3(\gamma_1-1)}{\gamma_1}-\theta}\|r-x\|^2_2\\
&+(1+t)^{\frac{2\gamma_1-1}{\gamma_1}-\theta}\left(\|(x\rho_\mu^\frac{1}{2}v_t,v,xv_x)\|^2_2+\|\rho_\mu^\frac{\gamma_1}{2}(r-x,xr_x-x)\|^2_2\right)\\
&+\int^t_0\left((1+s)^\frac{\gamma_1-1-\gamma_1\theta}{\gamma_1}\|\rho_\mu^\frac{\gamma_1}{2}(r-x,xr_x-x)\|^2_2+(1+s)^\frac{2\gamma_1-1-\theta\gamma_1}{\gamma_1}\|(v,xv_x,v_s,xv_{sx})\|_2^2\right)ds\\
&+\int^t_0\|\rho_\mu^\frac{\theta\gamma_1+2}{4}(r-x,xr_x-x)\|^2_2ds+\int^t_0(1+s)^\frac{2\gamma_1-1-\theta\gamma_1}{2\gamma_1}\|\rho_\mu^\frac{\gamma_1\theta-2(\gamma_1-1)}{4}(v,xv_x)\|^2_2ds\\
\leq& C_\theta\left(\|(v,xv_x,x\rho_\mu^\frac{1}{2}v_t)\|^2_2+\|r_x-1\|^2_\infty\right)\Bigg|_{t=0},\ \forall t\in[0,T].
\end{align*}
Moreover, for any $a\in(0,R_\mu)$ and $\theta\in \left(0,\frac{2(\gamma_1-1)}{3\gamma_1}\right)$, we obtain
\begin{align*}
&(1+t)^{\frac{2(\gamma_1-1)}{\gamma_1}-\theta}\left(\|r-x\|^2_{L^\infty[a,R_\mu]}+\|v\|^2_{L^\infty[a,R_\mu]}\right)\\
\lesssim&\left(\|(v,xv_x,x\rho_\mu^\frac{1}{2}v_t)\|_2+\|r_x-1\|^2_\infty\right)\Bigg|_{t=0},\ \forall t\in[0,T].
\end{align*}
\end{lemma}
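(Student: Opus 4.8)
The plan is to run a weighted energy scheme in the Lagrangian variables, combining a degenerate spatial weight $\rho_\mu^{-\beta}$ with an algebraic temporal weight $(1+t)^k$, in the spirit of the proof of Lemma~3.13 in \cite{LTXZZH20162}. The essential new point is that the spatial weight is calibrated so that the \emph{weighted} quadratic form
\[
\int \rho_\mu^{-\beta}\Big(\Phi''(\rho_\mu)\tfrac{1}{x^2}|\partial_x(x^2\rho_\mu u)|^2-4\pi x^2\rho_\mu^2 u^2\Big)\,dx
\]
is positive and absorbs the weighted remainder $\int \rho_\mu^{-\beta} x^2 P(\rho_\mu)\,o(w^2+|(xw)_x|^2)\,dx$, exactly as recorded just above from the weighted Hardy-type inequality, valid for an exponent $\beta=\beta(\theta)\in(0,\gamma-1)$. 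The admissible range $\theta\in(0,2(\gamma-1)/(3\gamma))$ is precisely what keeps this exponent in $(0,\gamma-1)$ while leaving room for the final interpolation.

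First I would derive the weighted analogue of the basic energy identity \eqref{7.48} by multiplying the Lagrangian momentum equation \eqref{7.46} by $\rho_\mu^{-\beta}(x+xw)^2xw_t$ and integrating over $[0,R_\mu]$, using the boundary condition $B(R_\mu,t)=0$ and the decay of $w,w_x$ near $x=R_\mu$ to kill the boundary contributions produced by the blow-up of $\rho_\mu^{-\beta}$. This yields $\tfrac{d}{dt}$ of a weighted energy plus a nonnegative weighted viscous dissipation, up to the remainder $\int \rho_\mu^{-\beta}x^2P(\rho_\mu)\,o(\cdots)$ which, by the weighted positivity above and the smallness of $\epsilon_1,\epsilon_2$, is absorbed into the weighted form of $\langle L_\mu(xw),(xw)\rangle_X$. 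Multiplying \eqref{7.28} by the weighted difference multiplier $\rho_\mu^{-\beta}x^3((1+w)^2-1)$ produces, as in \eqref{7.59}, a coercive term comparable to the weighted $\langle L_\mu(xw),(xw)\rangle_X$ (the dissipation of the potential part), and multiplying the time-differentiated equation \eqref{7.72} by $\rho_\mu^{-\beta}v_t$ gives the analogous estimate for $(v_t,xv_{tx})$, with the weighted $\eta_1$ comparable to $\tfrac12\int x^2\rho_\mu|v_t|^2$ plus half the weighted $\langle L_\mu v,v\rangle_X$; the error integrals $J_1,J_2$ and their weighted versions are bounded by $C\epsilon_1$ times the dissipation.

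Next I would attach the temporal weights: multiply each weighted identity by $(1+t)^k$ with the powers dictated by the statement, integrate in $t$, and arrange that the terms coming from $\tfrac{d}{dt}(1+t)^k$ are dominated by the time-integrated weighted dissipation, feeding back the unweighted decay already obtained in Lemma~\ref{lem7.1} and Lemma~\ref{lem7.2}. A final interpolation between the norm $\|\rho_\mu^{(\gamma\theta-2(\gamma-1))/4}(\cdot)\|_2$ (singular weight) and the dissipation norm $\|\rho_\mu^{(\theta\gamma+2)/4}(\cdot)\|_2$ (vanishing weight), combined with the $(1+t)$-decay of the intermediate quantities, upgrades the rates to the stated $(1+t)^{(\gamma-1)/\gamma-\theta}$, $(1+t)^{3(\gamma-1)/\gamma-\theta}$ and $(1+t)^{(2\gamma-1)/\gamma-\theta}$. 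For the pointwise estimates on $[a,R_\mu]$ I would integrate \eqref{7.28} from $x$ to $R_\mu$ as in the preceding lemma, solve the resulting ODE for $\mathfrak u=\int_0^t P\big(\tfrac{x^2\rho_\mu}{r^2r_x}\big)\,ds$ using $B(R_\mu,t)=0$, and convert the weighted $L^2$ bounds into $L^\infty[a,R_\mu]$ bounds via the one-dimensional Sobolev embedding, the singular weight carrying the argument near $x=R_\mu$ where $\rho_\mu$ degenerates.

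\textbf{Main obstacle.} The crux is the simultaneous choice of the spatial exponent $\beta$ and the temporal exponents so that (i) $\beta\in(0,\gamma-1)$ so the weighted positivity is available, (ii) the commutator terms generated by $\partial_x(\rho_\mu^{-\beta})$ and by $\tfrac{d}{dt}(1+t)^k$ are both swallowed by the weighted viscous and potential dissipations, and (iii) the final interpolation closes with exactly the exponents in the statement. Controlling the nonlinear remainders $Q_1$ uniformly in these degenerate weighted norms — which needs the a priori smallness $\epsilon_1,\epsilon_2$ together with the coercivity \eqref{7.27} — and checking that the boundary terms produced by the singular weight $\rho_\mu^{-\beta}$ genuinely vanish are the two delicate points.
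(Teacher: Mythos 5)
Your overall strategy (a degenerate spatial weight $\rho_\mu^{-\beta}$ calibrated to $\theta$, the weighted coercivity of the quadratic form recorded just before the lemma, and then the time-weight/interpolation machinery of \cite{LTXZZH20162}) is the same as the paper's, but the concrete device that makes the weighted estimate close is missing, and the multipliers you propose would not work as stated. The paper does \emph{not} weight the test functions of \eqref{7.48}, \eqref{7.59} or \eqref{7.72} directly; it multiplies the momentum equation \eqref{7.70} by the antiderivative $\int_0^x\rho_\mu^{-\beta}(r^3-y^3)_y\,dy$ with the specific exponent $\beta=\gamma-1-\frac{\gamma\theta}{2}$ (this is exactly why the weights $\rho_\mu^{\frac{\gamma\theta-2(\gamma-1)}{4}}=\rho_\mu^{-\beta/2}$ and $\rho_\mu^{\frac{\gamma\theta+2}{4}}=\rho_\mu^{\frac{\gamma-\beta}{2}}$ appear in the statement). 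With this multiplier the viscous flux $B$ is paired with $\rho_\mu^{-\beta}(r^3-x^3)_x$ and no derivative ever lands on the singular weight; the only errors are the mild terms $L_1,L_2,L_3$ (involving $v_t$ and the difference between the antiderivative and $\rho_\mu^{-\beta}(r^3-x^3)$), which are then handled together with Lemmas \ref{lem7.1}--\ref{lem7.2} and the bookkeeping of Lemma 3.9 in \cite{LTXZZH20162}.

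By contrast, your plan of multiplying \eqref{7.46} by $\rho_\mu^{-\beta}(x+xw)^2xw_t$, \eqref{7.28} by $\rho_\mu^{-\beta}x^3((1+w)^2-1)$, and the time-differentiated equation by $\rho_\mu^{-\beta}v_t$ forces an integration by parts of the viscous terms against the weight, producing commutators of the type $\int(\rho_\mu^{-\beta})_x\,B\,(\cdot)\,dx$ with $(\rho_\mu^{-\beta})_x\sim(R_\mu-x)^{-\frac{\beta}{\gamma-1}-1}$, a non-integrable singularity paired with factors ($B$, $xw_t$, $r^3-x^3$, $v_t$) that carry no quantified vanishing rate at the vacuum boundary; asserting that these and the associated boundary terms are "swallowed by the weighted dissipation" is precisely the step that fails, and it is what the antiderivative multiplier is designed to avoid. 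Two smaller inaccuracies: you leave $\beta$ unspecified, whereas the stated exponents pin it down, and the restriction $\theta\in\left(0,\frac{2(\gamma-1)}{3\gamma}\right)$ is not what keeps $\beta\in(0,\gamma-1)$ (that only needs $\theta<\frac{2(\gamma-1)}{\gamma}$); the sharper range is imposed by closing the $(1+t)$-weighted estimates, as in \cite{LTXZZH20162}.
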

\begin{proof}
Multiplying \eqref{7.70} by $\int^x_0 \rho_{\mu}^{-\beta}(r^3-y^3)_ydy~(\beta=\gamma_1-1-\frac{\gamma_1\theta}{2})$ and integrating  from $0$ to $R_\mu$, we have
\begin{align*}
&\int_0^{R_\mu} \rho_\mu^{-\beta}(\rho_\mu )\left(P(\rho_\mu )\left(\frac{x^4}{r^4}(r^3-x^3)\right)_x-P\left(\frac{\rho_\mu  x^2}{r^2r_x}\right)(r^3-x^3)_x\right)dx\\
&+\int_0^{R_\mu} \rho_\mu^{-\beta}\left(B(r^3-x^3)_x-4\nu_1\left(\frac{v}{r}\right)_x(r^3-x^3)\right)dx=:\sum_{i=1}^3 L_i,
\end{align*}
where
\begin{align*}
L_1:=&\int_0^{R_\mu} \rho_\mu\frac{x^2}{r^2}v_t\left(\int^x_0\rho_\mu^{-\beta}(r^3-y^3)_ydy\right)dx,\\
L_2:=&\int_0^{R_\mu} P(\rho_\mu)\left(\frac{x^4}{r^4}\right)_x\left(\rho_\mu^{-\beta}(r^3-x^3)-\int^x_0\rho_\mu^{-\beta}(r^3-y^3)_ydy\right)dx,\\
L_3:=&4\nu_1\int_0^{R_\mu}\left(\frac{v}{r}\right)_x\left(\int^x_0\rho_\mu^{-\beta}(r^3-y^3)_ydy-\rho_\mu^{-\beta}(r^3-x^3)\right)dx.
\end{align*}
Like \eqref{7.59}, we have
\begin{align*}
&\int_0^{R_\mu} \rho_\mu^{-\beta}(\rho_\mu )\left(P(\rho_\mu )\left(\frac{x^4}{r^4}(r^3-x^3)\right)_x-P\left(\frac{\rho_\mu  x^2}{r^2r_x}\right)(r^3-x^3)_x\right)dx\\
=&6\int_0^{R_\mu}   \rho_{\mu}^{-\beta}\Big(\rho_{\mu}P'(\rho_{\mu})x^4w_x^2+(-8P(\rho_{\mu})+6\rho_{\mu}P'(\rho_{\mu}))x^3ww_x\\
&+(-12P(\rho_{\mu})+9P'(\rho_{\mu})\rho_{\mu})x^2w^2\Big)dx
+\int_0^{R_\mu} \rho_\mu^{-\beta} x^2Q_2(w,(xw)_x)dx\\
=&6\int_0^{R_\mu}  \rho_{\mu}^{-\beta}\left(\frac{P'(\rho_{\mu})}{\rho_{\mu}}\frac{1}{x^2}|(x^3\rho_{\mu}w)_x|^2-4\pi x^4\rho_{\mu}^2w^2\right)dx\\
&-\int_0^{R_\mu} (\rho_{\mu}^{-\beta})_x\left(-4P(\rho_{\mu})+3\rho_{\mu}P'(\rho_{\mu})\right)x^3w^2dx\\
&+\int_0^{R_\mu}(\rho_{\mu}^{-\beta})_x\left(P'(\rho_{\mu})3x^3\rho_{\mu}w^2+P'(\rho_{\mu})(\rho_{\mu})_xx^4w^2\right)dx\\
&+\int_0^{R_\mu} \rho_\mu^{-\beta} x^2Q_2(w,(xw)_x)dx.
\end{align*}

For $r_1>0$ close to $0$ and $r_2<R_\mu$ close to $R_\mu$, there holds
\begin{align*}
&\int^{R_{\mu}}_{r_2} (\rho_{\mu}^{-\beta})_x(-4P(\rho_{\mu})+3\rho_{\mu}P'(\rho_{\mu}))x^3w^2dx
\lesssim\int^{R_{\mu}}_{r_2} (R-x)^{-\frac{\beta}{\gamma_1-1}-1}(R-x)^{\frac{\gamma_1}{\gamma_1-1}}x^3w^2dx\\
\lesssim&\int^{R_{\mu}}_{r_2} (R-x)^{\frac{1-\beta}{\gamma_1-1}}x^3w^2dx\lesssim \int^{R_{\mu}}_{r_2} x^2w^2dx,
\end{align*}
and
\begin{align*}
&\int^{R_{\mu}}_{r_2}(\rho_{\mu}^{-\beta})_x\left(P'(\rho_{\mu})3x^3\rho_{\mu}w^2+P'(\rho_{\mu})(\rho_{\mu})_xx^4w^2\right)dx\\
\lesssim&\int^{R_{\mu}}_{r_2}(R-x)^{-\frac{\beta}{\gamma_1-1}-1}((R-x)^{\frac{\gamma_1}{\gamma_1-1}}+(R-x)(R-x)^{\frac{1}{\gamma_1-1}-1})w^2dx\\
\lesssim&\int^{R_{\mu}}_{r_2}(R-x)^{\frac{1-\beta}{\gamma_1-1}-1}w^2dx\\
\lesssim&\int^{R_{\mu}}_{r_1}(R-x)^{\frac{1-\beta}{\gamma_1-1}+1}\left(|w|^2+|w_x|^2\right)dx.
\end{align*}
The last inequality holds by  $\frac{1-\beta}{\gamma_1-1}+1=\frac{\gamma_1-\beta}{\gamma_1-1}>1$.

Hence, by Lemma \ref{lem7.2}, we have
\begin{align*}
&\int_0^{R_\mu} x^2\rho_{\mu}^{-\beta}\left(\left(\frac{r}{x}-1\right)^2+(r_x-1)^2\right)dx+\int^t_0\int_0^{R_\mu}   x^2\rho_{\mu}^{\gamma-\beta}\left(\left(\frac{r}{x}-1\right)^2+(r_x-1)^2\right)dxds\\
\lesssim&\left(\|r_{x}-1\|^2_\infty+\left\|x(\frac{r}{x}-1,r_{x}-1)\right\|_2^2+\|x\rho_{\mu}^{\frac{1}{2}}v\|_2^2\right)\Bigg|_{t=0}+\sum_{i=1}^{3}\int^t_0|L_i|ds.
\end{align*}
The details of rest proof   are similar to the proof of Lemma 3.9 in \cite{LTXZZH20162}. We omit them.
\end{proof}

\subsubsection{High-order estimates}

Define
\begin{align*}
G:=\ln r_x+2\ln\left(\frac{r}{x}\right).
\end{align*}
Then, \eqref{5.16} becomes
\begin{align}
&\nu G_{xt}+P'\left(\frac{\rho_{\mu}x^2}{r^2r_x}\right)\frac{\rho_{\mu}x^2}{r^2r_x}G_x\nonumber\\
=&\frac{x^2}{r^2}\rho_{\mu}v_t-\left(\frac{P\left(\frac{\rho_{\mu}x^2}{r^2r_x}\right)}{P(\rho_{\mu})}-\left(\frac{x}{r}\right)^4\right)x\phi\rho_{\mu}+(\rho_{\mu})_x\left(P'\left(\frac{\rho_{\mu}x^2}{r^2r_x}\right)\frac{x^2}{r^2r_x}-\frac{P'(\rho_{\mu})}{P(\rho_{\mu})}P\left(\frac{\rho_{\mu}x^2}{r^2r_x}\right)\right),\label{7.115}
\end{align}
where
\begin{align*}
\phi:=\frac{1}{x^3}\int^x_04\pi\rho_\mu(s)s^2ds\in \left[\frac{M_\mu}{R_\mu^3},\frac{4\pi \mu}{3}\right],\ M_\mu:=\int_0^{R_\mu} 4\pi\rho_\mu(s)s^2ds,
\end{align*}
and
\begin{align*}
(P(\rho_\mu))_x=-x\phi\rho_\mu.
\end{align*}

The results of Lemma 3.10-3.17 in \cite{LTXZZH20162} also hold because of the estimates don't depend on the equations.

\begin{lemma}
For any $\theta\in\left(0,\frac{2(\gamma_1-1)}{3\gamma_1}\right)$ and $a\in(0,R_\mu)$, there holds
\begin{align*}
&\left\|\rho_\mu^{\gamma_1-\frac{1}{2}}\left(r_{xx},\left(\frac{r}{x}\right)_x\right)\right\|^2_2\lesssim\mathcal{E}(0),\\
&(1+t)^{\frac{\gamma_1-1}{\gamma_1}-\theta}\left\|\left(r_{xx},\left(\frac{r}{x}\right)_x\right)\right\|^2_{L^2(0,a)}\lesssim_{\theta,a}\left(\mathcal{E}(t)+\|r_x-1\|^2_\infty\right)|_{t=0},\\
&(1+t)^{\frac{\gamma_1-1}{\gamma_1}-\theta}\left\|\rho_\mu^\frac{1}{2}v_t\right\|^2_2+\int^t_0(1+s)^{\frac{\gamma_1-1}{\gamma_1}-\theta}\left\|\left(v_x,\frac{v}{x},v_{sx},\frac{v_s}{x}\right)\right\|^2_2ds\lesssim_\theta(\mathcal{E}(t)+\|r_x-1\|^2_\infty)|_{t=0},\\
&(1+t)^{\frac{\gamma_1-1}{\gamma_1}-\theta}\left\|\left(v_{xx},\left(\frac{v}{x}\right)_x\right)\right\|^2_{L^2(0,a)}\lesssim_{\theta,a}(\mathcal{E}(t)+\|r_x-1\|^2_\infty)|_{t=0},\ \forall t\in[0,T].
\end{align*}
Here,
\begin{align*}
\mathcal{E}(t):=&\|(r-x,xr_x-x)\|^2_2+\|(v,xv_x)\|^2_2+\|r_x-1\|^2_{L^\infty[r_2,R_\mu]}\\
&+\left\|\rho_\mu^{\gamma-\frac{1}{2}}\left(r_{xx},\left(\frac{r}{x}\right)_x\right)\right\|^2_2+\|\rho_\mu^\frac{1}{2}v_t\|^2_2
\end{align*}
for $r_2<R_\mu$ close to $R_\mu.$
\end{lemma}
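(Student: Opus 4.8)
The plan is to derive the four estimates by the weighted high‑order energy method of Lemmas 3.10--3.17 in \cite{LTXZZH20162}, using equation \eqref{7.115} for the combined quantity $G=\ln r_x+2\ln(r/x)$ to control the second spatial derivatives $(r_{xx},(r/x)_x)$, and using equations \eqref{7.70}--\eqref{7.72} to control the time derivatives of the velocity. The one conceptual change, exactly as in the lower‑order estimates, is that wherever \cite{LTXZZH20162} invokes the pointwise polytropic structure of the pressure to obtain positivity of a leading quadratic form, I would instead invoke the positivity of $\langle L_\mu(xw),(xw)\rangle_X$ (and of its $\rho_\mu^{-\beta}$‑weighted variant) established in subsection 4.2.1, together with the Hardy inequalities of Lemma \ref{lem3.1} to handle the degeneracy at the vacuum boundary and the singularity at the center.

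For the first estimate I would multiply \eqref{7.115} by $\rho_\mu^{2\gamma-1}G_x$ and integrate over $(0,R_\mu)$. Since $\rho_\mu$ is $t$‑independent, the damping term $\nu G_{xt}$ produces $\frac{\nu}{2}\frac{d}{dt}\int\rho_\mu^{2\gamma-1}G_x^2\,dx$, while the coefficient of $G_x$ in \eqref{7.115} equals $P'(\rho_\mu x^2/(r^2r_x))\,\rho_\mu x^2/(r^2r_x)\sim\rho_\mu^{\gamma}$ near the vacuum boundary, so for $\epsilon_1$ small it contributes a coercive term $\gtrsim\int\rho_\mu^{3\gamma-1}G_x^2\,dx$. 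The right side of \eqref{7.115} is treated by Cauchy--Schwarz: the term $\frac{x^2}{r^2}\rho_\mu v_t$ is absorbed by $\|\rho_\mu^{1/2}v_t\|_2$ (the $v_t$‑part of $\mathcal E$) and a small multiple of the coercive term, while the two remaining terms are, after applying Lemma \ref{lem3.1}, $o(1)$ multiples of $G_x$ plus quantities already controlled by the lower‑order estimates. Integrating in time and noting that $G_x$ is, modulo lower‑order terms, equivalent to the pair $(r_{xx},(r/x)_x)$, yields $\|\rho_\mu^{\gamma-\frac12}(r_{xx},(r/x)_x)\|_2^2\lesssim\mathcal E(0)$. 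The interior bound on $(0,a)$ then follows because $\rho_\mu\gtrsim1$ there; to insert the time weight $(1+t)^{(\gamma-1)/\gamma-\theta}$ one repeats the energy identity multiplied by $(1+t)^{(\gamma-1)/\gamma-\theta}$ and absorbs the extra $\frac{d}{dt}(1+t)^{(\gamma-1)/\gamma-\theta}$ term using the decay of the first‑order quantities from the preceding lemma.

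For the velocity estimates I would differentiate \eqref{7.70} in $t$ to reach \eqref{7.72}, test with $v_t$, and argue as in the lemma above on $v_t$ but with the weight $(1+t)^{(\gamma-1)/\gamma-\theta}$ inserted: the coercive energy is $\eta_1\sim\frac12\int x^2\rho_\mu v_t^2\,dx+\frac12\langle L_\mu v,v\rangle_X$, which is positive for $\epsilon_1,\epsilon_2$ small by subsection 4.2.1, and its dissipation produces $\int_0^t(1+s)^{(\gamma-1)/\gamma-\theta}\|(v_x,v/x,v_{sx},v_s/x)\|_2^2\,ds$. The transport and pressure remainders $J_1,J_2$ are absorbed using the smallness of $\epsilon_1,\epsilon_2$ together with the improved $\rho_\mu^{-\beta}$‑weighted estimate ($\beta=\gamma-1-\gamma\theta/2$) of the preceding lemma. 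Finally, the interior bound on $(v_{xx},(v/x)_x)$ over $(0,a)$ follows by viewing the $t$‑differentiated $G$‑equation (equivalently the elliptic equation satisfied by $v_x$ at fixed $t$) as in Lemma 3.17 of \cite{LTXZZH20162}, again using $\rho_\mu\gtrsim1$ on $(0,a)$ and the already‑established $L^2$ bounds.

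The main obstacle is the bookkeeping of the various weights and exponents so that everything closes at once: the power $2\gamma-1$ in the multiplier $\rho_\mu^{2\gamma-1}G_x$, the admissible range $\theta\in(0,2(\gamma-1)/(3\gamma))$, and the shift $\beta=\gamma-1-\gamma\theta/2$ must be compatible so that (i) each application of Lemma \ref{lem3.1} is legitimate, i.e. the resulting powers of $(R_\mu-x)$ strictly exceed the Hardy threshold; (ii) after multiplication by the weight, the $o(w^2+|(xw)_x|^2)$ pressure remainders are absorbed by the positive quadratic form and not the other way around; and (iii) the time‑weighted energy inequalities chain together via Gr\"onwall to give precisely the stated decay exponents. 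Because $P(\rho)\sim K_1\rho^{\gamma}$ only near $\rho=0$, every degenerate estimate must be localized to $[r_2,R_\mu)$ and matched to nondegenerate interior estimates on $[0,r_2]$; this matching is routine but is where the bulk of the computation lies. I expect no conceptual difficulty beyond this, since the substitution of $\langle L_\mu\cdot,\cdot\rangle_X>0$ for the pointwise pressure structure has already been carried out in subsection 4.2.1 and in the earlier lemmas of this section.
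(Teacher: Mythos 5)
Your proposal follows essentially the same route as the paper: multiply \eqref{7.115} by $\rho_\mu^{2\gamma-1}G_x$, get coercivity from the coefficient $P'\!\left(\frac{\rho_\mu x^2}{r^2r_x}\right)\frac{\rho_\mu x^2}{r^2r_x}\sim\rho_\mu^\gamma$ near the boundary, absorb the $v_t$ and remainder terms via Cauchy--Schwarz and the lower-order (time-weighted) estimates, and defer the remaining bookkeeping to Lemmas 3.10--3.18 of \cite{LTXZZH20162}. The only point the paper writes out that you leave implicit is the Taylor-expansion bound of the term $(\rho_\mu)_x\left[P'\!\left(\frac{\rho_\mu x^2}{r^2r_x}\right)\frac{x^2}{r^2r_x}-\frac{P'(\rho_\mu)}{P(\rho_\mu)}P\!\left(\frac{\rho_\mu x^2}{r^2r_x}\right)\right]$, which uses $P''(\rho_\mu)\rho_\mu+P'(\rho_\mu)-\frac{P'(\rho_\mu)}{P(\rho_\mu)}P'(\rho_\mu)\rho_\mu\sim\rho_\mu^{\gamma-1}$ (assumption (P2)) and $\partial_x\rho_\mu=\frac{x\phi\rho_\mu}{P'(\rho_\mu)}$ to get a bound by $\int x^2\rho_\mu^{\gamma}\left[\left(\frac{r}{x}-1\right)^2+(r_x-1)^2\right]dx$; this, rather than the positivity of $\langle L_\mu\cdot,\cdot\rangle_X$, is where the general equation of state enters at this stage.
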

\begin{proof}
Multiplying \eqref{7.115} by $\rho_\mu^{2\gamma_1-1}G_x$, integrating from $0$ to $R_\mu$ and using the Cauchy inequality,  we have
\begin{align}
&\frac{\nu}{2}\int_0^{R_\mu} \rho_\mu^{2\gamma_1-1}|G_x|^2dx+\frac{1}{2}\int_0^{R_\mu}  P'\left(\frac{\rho_{\mu}x^2}{r^2r_x}\right)\frac{\rho_{\mu}x^2}{r^2r_x}\rho_\mu^{2\gamma_1-1}|G_x|^2dx\nonumber\\
\lesssim& \int_0^{R_\mu} v^2_tdx+\int_0^{R_\mu} x^2\rho^{\gamma_1}_\mu\left(\left|\frac{r}{x}-1\right|^2+|r_x-1|^2\right)dx\nonumber\\
&+\int_0^{R_\mu} \left((\rho_{\mu})_x\left(P'\left(\frac{\rho_{\mu}x^2}{r^2r_x}\right)\frac{x^2}{r^2r_x}-\frac{P'\left(\rho_{\mu}\right)}{P(\rho_{\mu})}P\left(\frac{\rho_{\mu}x^2}{r^2r_x}\right)\right)\right)^2\rho_\mu^{\gamma_1-1}dx.\label{7.126}
\end{align}
We focus on the third term in \eqref{7.126}. Using Taylor expansion, there holds
\begin{align}\label{7.129}
&\int_0^{R_\mu} \left((\rho_{\mu})_x\left(P'\left(\frac{\rho_{\mu}x^2}{r^2r_x}\right)\frac{x^2}{r^2r_x}-\frac{P'(\rho_{\mu})}{P(\rho_{\mu})}P\left(\frac{\rho_{\mu}x^2}{r^2r_x}\right)\right)\right)^2\rho_\mu^{\gamma_1-1}dx\\
= &\int_0^{R_\mu}  ((\rho_{\mu})_x)^2 \rho_\mu^{\gamma_1-1}\Big((-2P''(\rho_{\mu})\rho_{\mu}-2P'(\rho_{\mu})+2\frac{P'^2(\rho_{\mu})}{P(\rho_{\mu})}\rho_\mu)(\frac{r}{x}-1)\nonumber\\
&\left(-P''(\rho_{\mu})\rho_{\mu}-P'(\rho_{\mu})+\frac{P'^2(\rho_{\mu})}{P(\rho_{\mu})}\rho_\mu\right)(r_x-1)+Q_3(\frac{r}{x}-1,r_x-1)\Big)^2dx.\nonumber
\end{align}
where  $Q_3(\frac{r}{x}-1,r_x-1)$ is bounded  by
\begin{align*}
&Q_3(\frac{r}{x}-1,r_x-1)\\
\lesssim&\left(P'''(\tilde{\rho})\rho^2_\mu+P''(\tilde{\rho})\rho_\mu+P'(\tilde{\rho})-\frac{P'(\rho_\mu)}{P(\rho_\mu)}(P''(\tilde{\rho})\rho^2_\mu+P'(\tilde{\rho})\rho_\mu)\right)\left(\frac{r}{x}-1\right)^2+(r_x-1)^2
\end{align*}
with $\tilde{\rho}=\frac{\rho_\mu}{\left(1+\xi_3(\frac{r}{x}-1)\right)^2\left(1+\xi_3(r_x-1)\right)}$ for some $\xi_3\in(0,1)$.

For $x$ close to $R_\mu$, we have
\begin{align*}
&-2P''(\rho_{\mu})\rho_{\mu}-2P'(\rho_{\mu})+2\frac{P'^2(\rho_{\mu})}{P(\rho_{\mu})}\rho_\mu\lesssim \rho_{\mu}^{\gamma_1-1}\\
&-P''(\rho_{\mu})\rho_{\mu}-P'(\rho_{\mu})+\frac{P'^2(\rho_{\mu})}{P(\rho_{\mu})}\rho_\mu\lesssim \rho_{\mu}^{\gamma_1-1}\\
&P'''(\tilde{\rho})\rho^2_\mu+P''(\tilde{\rho})\rho_\mu+P'(\tilde{\rho})-\frac{P'(\rho_\mu)}{P(\rho_\mu)}(P''(\tilde{\rho})\rho^2_\mu+P'(\tilde{\rho})\rho_\mu)\lesssim \rho_{\mu}^{\gamma_1-1}.
\end{align*}
Note that $(\rho_\mu)_x=\frac{x\phi\rho_\mu}{P'(\rho_\mu)}.$ Hence, for $\epsilon_1$ small enough, there holds
\begin{align*}
\eqref{7.129}\lesssim\int_0^{R_\mu} x^2\rho_{\mu}^{\gamma_1}\left(\left(\frac{r}{x}-1\right)^2+(r_x-1)^2\right)dx.
\end{align*}
The details of rest proof   are similar to the proof of Lemma 3.18 in \cite{LTXZZH20162}. We omit them.
\end{proof}

By the lemmas above and the local existence in the next subsection, we obtain the global existence and long time behavior of the strong solution by the similar arguments of Theorem 2.3 in \cite{LTXZZH20162} which is stated below.
\begin{theorem}\label{thm7.7}
Assume that  $n^-(L_{\mu})=0$ and $\frac{dM_\mu}{d\mu}\neq 0$. $P$ satisfies (P1), (P2) and the initial total mass is as the same as the mass of the Lane-Emden star. There exists a constant $\varepsilon>0$ small enough such that if $E(0)\leq \varepsilon$,  the problem \eqref{5.16} with boundary condition \eqref{7.31} and initial data $(r,v)|_{t=0}=(r_0(x),v_0(r_0(x)))$ admits a unique strong solution in $[0,R_\mu]\times[0,+\infty)$ with
\begin{align*}
E(t)\lesssim E(0),\quad \forall t\geq 0,
\end{align*}
where
\begin{align*}
E(t)=\|(r_x-1,v_x)\|^2_\infty+\left\|\rho_\mu^{\gamma_1-\frac{1}{2}}\left(r_{xx},\left(\frac{r}{x}\right)_x\right)\right\|^2_2+\|\rho_\mu^{\frac{1}{2}}v_t\|^2_2.
\end{align*}
\end{theorem}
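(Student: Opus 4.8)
The plan is to run a continuity (bootstrap) argument that couples a local existence theorem for the free-boundary problem \eqref{7.28}--\eqref{7.31} in Lagrangian coordinates with the uniform-in-time a priori estimates established in the previous lemmas, exactly in the spirit of the proof of Theorem 2.3 in \cite{LTXZZH20162}. First I would invoke the local well-posedness established in the next subsection: for initial data $(r,v)|_{t=0}=(r_0,u_0(r_0))$ that form a small perturbation, with the same total mass, of the non-rotating profile $(x,0)$, there is $T_\ast>0$ and a unique strong solution on $[0,R_\mu]\times[0,T_\ast]$ lying in the regularity class for which $r_x-1$, $v_x\in L^\infty$ and $\rho_\mu^{\gamma-\frac12}(r_{xx},(r/x)_x)$, $\rho_\mu^{1/2}v_t\in L^2$, so that $E(t)$ (and the larger functional $\mathcal{E}(t)$ of the last lemma, equivalent to $E(t)$ under the running smallness) is finite and continuous in $t$.

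Second, I would close the estimate. Fix $\bar\epsilon>0$ small and assume $E(0)\le\bar\epsilon$. Let $[0,T]$ be any interval on which the running hypotheses $\sup|r/x-1|\le\sup|r_x-1|\le\epsilon_1$ and $\sup|v/x|\le\sup|v_x|\le\epsilon_2$ hold. Applying the lemmas above in order --- (i) the basic energy identity of Lemma \ref{lem7.1}; (ii) the $L^2$-in-time and $(1+t)$-weighted bounds of Lemma \ref{lem7.2}; (iii) the pointwise control of $r/x-1$, $r_x-1$ near $x=R_\mu$; (iv) the estimate for $(x\rho_\mu^{1/2}v_t,v,xv_x)$ with its $(1+t)$ weight; (v) the improved decay estimates carried out with the weight $\rho_\mu^{-\beta}$, $\beta=\gamma-1-\gamma\theta/2$; and (vi) the high-order estimate for $\rho_\mu^{\gamma-1/2}(r_{xx},(r/x)_x)$ --- and using the Cauchy inequality to absorb the cross terms, one obtains $E(t)\le C_0E(0)$ for all $t\in[0,T]$ with $C_0$ independent of $T$ and of the solution. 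The structural point that makes each of these work, and that replaces the pointwise $\gamma$-dependent computations of \cite{LTXZZH20162}, is that $n^-(L_\mu)=0$ and $M'(\mu)\neq0$ make the quadratic form $\langle L_\mu(xw),(xw)\rangle_X$ positive; via \eqref{5.37}, \eqref{7.27}, Hardy's inequality (Lemma \ref{lem3.1}) and Lemma 3.13 of \cite{LZ2019} it dominates $\int P'(\rho_\mu)\rho_\mu x^2|(xw)_x|^2\,dx+\int x^2\rho_\mu|w|^2\,dx$ as well as its $\rho_\mu^{-\beta}$-weighted analogue, so it absorbs the nonlinear remainders $\int x^2P(\rho_\mu)\,o(w^2+|(xw)_x|^2)\,dx$ (and the weighted ones) arising from the Taylor expansions.

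Third, I would upgrade this to recover the running hypotheses. By the Sobolev--Hardy embeddings implicit in the lemmas, the quantities controlled by $E(t)$ bound $\|r_x-1\|_\infty+\|v_x\|_\infty+\|r/x-1\|_\infty+\|v/x\|_\infty\le C_1\sqrt{E(t)}$, treating the center $x=0$ (where $v(0,t)=0$) and the vacuum face $x=R_\mu$ (where the boundary condition \eqref{5.24}/\eqref{7.31} and the weighted estimates apply) separately. Hence on $[0,T]$ one has $\|r_x-1\|_\infty+\cdots\le C_1\sqrt{C_0\bar\epsilon}$, and choosing $\bar\epsilon$ so that $C_1\sqrt{C_0\bar\epsilon}<\tfrac12\min(\epsilon_1,\epsilon_2)$ recovers the hypotheses with strict inequality. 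Thus the set of admissible $T$ is both open and closed in the maximal existence interval, the standard continuation criterion (the solution persists as long as $E$ stays bounded) gives a global solution, and renaming $C_0$ as $C$ yields $E(t)\le CE(0)$ for all $t\ge0$; the $(1+t)$-weighted bounds in the lemmas supply the asymptotic behavior. The main obstacle is step two: carrying the whole hierarchy of weighted energy estimates through with constants genuinely uniform in $T$ despite the degeneracy $\rho_\mu\sim(R_\mu-x)^{1/(\gamma-1)}$ at the vacuum boundary and the coordinate singularity at $x=0$ --- one must track the exact powers of $\rho_\mu$ through every integration by parts so that each boundary term either vanishes by \eqref{5.24}/\eqref{7.31} or is dominated, and interpolate the high-order norm against the decaying low-order norms to close the time-weighted estimates, which is precisely where the positivity of $\langle L_\mu\cdot,\cdot\rangle_X$ and the weighted Hardy inequalities are used most delicately. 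The rest runs parallel to \cite{LTXZZH20162}.
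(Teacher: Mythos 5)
Your proposal is correct and follows essentially the same route as the paper: the paper likewise obtains Theorem \ref{thm7.7} by combining the local existence from the finite-difference construction in the following subsection with the a priori estimates of Lemmas \ref{lem7.1}--\ref{lem7.2} and the subsequent weighted and high-order lemmas, closed by the continuity argument of Theorem 2.3 in \cite{LTXZZH20162}, with the positivity of $\langle L_{\mu}(xw),(xw)\rangle_X$ (from $n^-(L_\mu)=0$ and $M'(\mu)\neq 0$) absorbing the nonlinear remainders exactly as you describe.
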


The definition of strong solution to the problem \eqref{5.16} with boundary condition \eqref{7.31} is as the same as Definition 2.1 in \cite{LTXZZH20162}. Since the faster decay estimates and further regularity of the solution are similar to those in \cite{LTXZZH20162}, the asymptotic stability of the stationary solution also holds. We state the result which is similar to Theorem 2.4 in \cite{LTXZZH20162}.

\begin{theorem}
\label{thm:asymptotic sta}Under the assumptions in Theorem \ref{thm7.7}, there
exists a unique global strong solution $(\rho(r,t),u(r,t),R(t))$ to the free
boundary problem \eqref{1.4}-\eqref{1.8} with initial data $(\rho,u,R)|_{t=0}$. Moreover, for $0<\theta
<\frac{2(\gamma_1-1)}{3\gamma_1}$, there holds
\begin{align*}
&  \sup_{x\in\lbrack0,R_{\mu}]}|r(x,t)-x|\lesssim_{\theta}(1+t)^{-\frac
{\gamma_1-1}{\gamma_1}+\frac{\theta}{2}}(E(0))^{\frac{1}{2}},\\
&  \sup_{r\in\lbrack0,R(t)]}|u(r,t)|\lesssim_{\theta}(1+t)^{-\frac{3\gamma_1
-2}{4\gamma_1}+\frac{\theta}{2}}(E(0))^{\frac{1}{2}},\\
&  \sup_{r\in\lbrack0,R(t)]}\left|u_{r},\frac{u}{r}\right|\lesssim_{\theta}(1+t)^{-\frac
{\gamma_1-1}{2\gamma_1}+\frac{\theta}{2}}(E(0))^{\frac{1}{2}},\\
&  \sup_{x\in\lbrack0,R_{\mu}]}\left|\rho_{\mu}^{\frac{3\gamma_1-6}{4}}(\rho(r(x,t),t)-\rho_{\mu}(x))\right|\lesssim_{\theta}(1+t)^{-\frac{\gamma_1
-1}{2\gamma_1}+\frac{\theta}{2}}(E(0))^{\frac{1}{2}},\ \forall t\geq0.
\end{align*}

\end{theorem}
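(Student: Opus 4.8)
The plan is to obtain Theorem~\ref{thm:asymptotic sta} as a consequence of the global existence statement Theorem~\ref{thm7.7} together with the hierarchy of weighted energy and decay estimates established in Lemmas~\ref{lem7.1}, \ref{lem7.2} and the subsequent lemmas of this section, transcribing the argument of Theorem~2.4 in \cite{LTXZZH20162} to the general equation of state. First I would record local well-posedness of the Lagrangian free-boundary problem \eqref{7.28}, \eqref{7.31} for data close (in the norm $E(0)$) to the non-rotating profile $r=x$, $v=0$, and then combine it with the a priori bound $E(t)\le CE(0)$, which follows from the low-order estimates (Lemmas~\ref{lem7.1}--\ref{lem7.2}) and the high-order estimates above, to continue the solution to a global strong solution $(r,v)$ on $[0,R_\mu]\times[0,\infty)$. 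For $E(0)$ small, $x\mapsto r(x,t)$ is a bi-Lipschitz diffeomorphism of $[0,R_\mu]$ onto $[0,R(t)]$, $R(t)=r(R_\mu,t)$, with $r_x$ bounded above and below, so the Eulerian triple is recovered by $\rho(r(x,t),t)=\tfrac{x^2\rho_\mu(x)}{r^2r_x}$, $u(r(x,t),t)=v(x,t)$, $\tfrac{d}{dt}R(t)=u(R(t),t)$, and this is the unique strong solution of \eqref{1.4}--\eqref{1.5} (the equivalence of the Lagrangian formulation with \eqref{3.1}--\eqref{3.3} being the one verified in the appendix).

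Next I would extract the time-decay in the Lagrangian variables. The improved decay lemmas above provide the $(1+t)$-weighted bounds for $\|x(\tfrac rx-1,r_x-1)\|_2$, $\|r-x\|_2$, $\|(v,xv_x)\|_2$, $\|x\rho_\mu^{1/2}v_t\|_2$, as well as the $\rho_\mu$-weighted second-order quantities $\|\rho_\mu^{\gamma-1/2}(r_{xx},(\tfrac rx)_x)\|_2$ and $\|\rho_\mu^{1/2}v_t\|_2$. Together with the faster-decay and further-regularity estimates — which are obtained exactly as in \cite{LTXZZH20162}, since the relevant steps there use only the already-derived algebraic identities, the Hardy inequality of Lemma~\ref{lem3.1}, and the positivity of $\langle L_\mu(xw),(xw)\rangle_X$ rather than the pointwise structure of $P$ — these yield polynomial-in-$t$ decay with the rates appearing in the statement.

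Then I would pass from the weighted $L^2$ estimates to the $L^\infty$ bounds. Near the center $x=0$ I use $v(0,t)=0$, the regularity of $w$, and the interior second-order estimates on intervals $(0,a)$ together with one-dimensional Sobolev embedding to bound $r-x$, $u$, $u_r$ and $u/r$ on compact subsets. Near the vacuum boundary, where $\rho_\mu(x)\sim(R_\mu-x)^{1/(\gamma-1)}$, I combine the $\rho_\mu$-weighted $L^2$ estimates with the weighted Hardy inequality and interpolation to control $r-x$ and $u$ up to the boundary with the stated weights; the density estimate then follows from $\rho-\rho_\mu=-(2\rho_\mu/x+\rho_\mu')(r-x)-\rho_\mu(r_x-1)+(\text{higher order})$ and the weighted bounds on $(r-x,r_x-1)$, accounting for the factor $\rho_\mu^{(3\gamma-6)/4}$. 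Gluing the interior and boundary-layer estimates gives the four displayed inequalities.

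The main obstacle, as in \cite{LTXZZH20162}, is the simultaneous degeneracy at the vacuum interface and the coordinate singularity at $r=0$, which makes the choice of the $\rho_\mu$-weights in the energy functionals delicate; the new point here is that one must use the positivity of the quadratic form $\langle L_\mu(xw),(xw)\rangle_X$ — available since $n^-(L_\mu)=0$ and $M'(\mu)\neq0$ — to absorb the higher-order remainder terms $Q_1=o\big(x^2P(\rho_\mu)(|w|^2+|(xw)_x|^2)\big)$ in the energy identities, in place of the pointwise sign information on $P$ exploited in the polytropic case. Thus the essential verification is that each remainder term, including the $\rho_\mu^{-\beta}$- and $\rho_\mu^{2\gamma-1}$-weighted ones in the improved decay and high-order lemmas, is genuinely of this $o(\cdot)$ form with the required weight, so that half of the corresponding positive quadratic form controls it. Once the Lagrangian decay rates are established, the translation to the Eulerian variables $(\rho,u,R)$ is routine.
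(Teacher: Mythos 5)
Your proposal matches the paper's argument: the paper likewise obtains Theorem \ref{thm:asymptotic sta} by combining the global existence and uniform bound $E(t)\le CE(0)$ of Theorem \ref{thm7.7} (built from the low-order, improved-decay and high-order lemmas, where the positivity of $\langle L_\mu(xw),(xw)\rangle_X$ absorbs the remainders $Q_1$ in place of the polytropic pointwise structure) with the faster decay and further regularity estimates transcribed from \cite{LTXZZH20162}, and then translating back to the Eulerian variables via the Lagrangian map. This is essentially the same approach, so no further comparison is needed.
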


We consider white dwarf stars to be an important example.

\begin{remark}
For white dwarf stars with the equation of states $P_{\text{w}}\left(
\rho\right)  =Af\left(  x\right)  $ and $\rho=Bx^{3}$, where $A,B$ are two
constants and
\begin{align}
f\left(  x\right)   &  =x\left(  x^{2}+1\right)  ^{\frac{1}{2}}\left(
2x^{2}-3\right)  +3\ln\left(  x+\sqrt{1+x^{2}}\right)
  =8\int_{0}^{x}\frac{u^{4}}{\sqrt{1+u^{2}}}du,\label{defn-f-white dwarf}
\end{align}
it was shown in \cite{LZ2019} that $\frac{dM_\mu}{d\mu}  >0$ and
$n^{-}(L_{\mu})=0$ for any $\mu\in\left(  0,\infty\right)  $. Thus, by
Theorems \ref{thm7.7} and \ref{thm:asymptotic sta}, any non-rotating white
dwarf star is asymptotically stable under small radial perturbation.
\end{remark}

For more general equation of states with $\gamma_{1}\in\left(  \frac{4}%
{3},2\right)  $, the non-rotating stars are linearly stable up to the first
maximum of the total mass by the turning point principle. Thus, Theorem \ref{thm:asymptotic sta} implies  the
asymptotic stability of the non-rotating stars with center density up to the
first maximum point of the mass.

\subsubsection{Local existence}

In this part, we prove the local existence of strong solution to the problem \eqref{5.16}-\eqref{7.31} with initial data $(r^0,v^0)$ on a time interval $[0,T_*]$ for some $T_*>0$ by Galerkin method.

The finite difference scheme is defined as follows. Let $N>0$ be an integer and $h=\frac{R_\mu}{N}$. For $n=0,1,\ldots,N$, let $x_n=nh$, $\rho_n=\rho_\mu(x_n)$ and $v_n=v(x_n)$. Approximate \eqref{5.16} by the following Cauchy problem for $v_1,\ldots,v_{N-1}$ of ODE:
\begin{align}
&\rho_n\left(\frac{x_n}{r_n}\right)^2\frac{dv_n}{dt}+\frac{1}{h}\left(\left(B_n+4\lambda_1\frac{v_{n-1}}{r_{n-1}}\right)-\left(B_{n+1}+4\lambda_1\frac{v_n}{r_n}\right)\right)=q_n\left(\frac{x^4_n}{r^4_n}-1\right)\nonumber\\
&+\frac{1}{h}\left(\left(P(\rho_{n+1})-P\left(\frac{x^2_n\rho_{n+1}h}{r^2_n(r_{n+1}-r_n)}\right)\right)-\left(P(\rho_{n})-P\left(\frac{x^2_{n-1}\rho_{n}h}{r^2_{n-1}(r_{n}-r_{n-1})}\right)\right)\right),\label{7.138}\\
&v_n(t)|_{t=0}=v^0(x_n),\label{7.139}
\end{align}
where
\begin{align*}
&q_n=(P(\rho_\mu))_x(x_n),\ r_n(t)=r^0(x_n)+\int^t_0v_n(s)ds,\\
&B_n=\nu\frac{v_n-v_{n-1}}{r_n-r_{n-1}}+\left(2\nu_2-\frac{4\nu_1}{3}\right)\frac{v_{n-1}}{r_{n-1}}.
\end{align*}

To match the conditions \eqref{7.31}, the boundary  conditions are imposed by
\begin{align}
&v_0(t)=0,\nonumber\\
&B_N(t)=\nu\frac{v_N-v_{N-1}}{r_N-r_{N-1}}+\left(2\nu_2-\frac{4\nu_1}{3}\right)\frac{v_{N-1}}{r_{N-1}}=0,\label{7.144}\\
&r_0(t)=0.\nonumber
\end{align}
By \eqref{7.144} and
\begin{align}\label{7.146}
\frac{dr_n}{dt}=v_n,
\end{align}
we have
\begin{align*}
0=B_N(t)=\frac{d}{dt}\left(\nu\ln\left(\frac{r_N-r_{N-1}}{h}\right)+\left(2\nu_2-\frac{4\nu_1}{3}\right)\ln\left(\frac{r_{N-1}}{x_{N-1}}\right)\right).
\end{align*}
It concludes that
\begin{align*}
r_N(t)=r_{N-1}(t)+\left(r^0(x_N)-r^0(x_{N-1})\right)\left(\frac{r^0(x_{N-1})}{r_{N-1}}\right)^{\frac{2\nu_2}{\nu}-\frac{4\nu_1}{3\nu}}.
\end{align*}
By \eqref{7.146}, we can obtain $v_N(t)$. In all, the  ordinary differential equations   are closed.

The approximation of $E(t)$ is defined by
\begin{align*}
E_N(t)=&\max_{1\leq n\leq N}\left(\left|\frac{r_n-r_{n-1}}{h}-1\right|^2+\left|\frac{v_n-v_{n-1}}{h}\right|^2\right)+h\sum_{n=1}^{N-1}\rho_n\left|\frac{dv_{n}}{dt}\right|^2\\
&+h\sum_{n=1}^{N-1}\rho_n^{2\gamma_1-1}\left(\left|\frac{r_{n+1}-2r_n+r_{n-1}}{h^2}\right|^2+\left|\frac{1}{h}\left(\frac{r_n}{x_n}-\frac{r_{n-1}}{x_{n-1}}\right)\right|^2\right).
\end{align*}

\begin{lemma}
Under the assumptions in Theorem \ref{thm7.7}, let $E(0)<\infty$ and $r^0(0)=0$. Then, there exists $\varepsilon>0$  small enough independent of $N$ such that if $\|r_x^0-1\|_\infty< \varepsilon$ and $N$ is large enough, the problem \eqref{7.138}-\eqref{7.139} admits a unique solution $(r_n,v_n)$ on the time interval $[0,T_*]$ for some constant $T_*>0$ independent of $N$. satisfying
\begin{align*}
&E_N(t)\leq C'E_N(0)\leq 2C'E(0),\ \forall t\in[0,T_*],\\
&T_*\geq \min\left\{\frac{C''}{\sqrt{2C'E(0)}},\frac{1}{C'(1+2C'E(0))}\right\},
\end{align*}
where constants $C',C''>0$ independent of $N$.
\end{lemma}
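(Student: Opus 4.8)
The plan is to treat \eqref{7.138}--\eqref{7.139} as a finite-dimensional ODE system, solve it locally by Cauchy--Lipschitz, and then promote local existence to a uniform-in-$N$ time interval by reproducing the continuous energy estimates at the discrete level.

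\textbf{Step 1 (local ODE solvability).} For fixed $N$ the genuine unknowns are $v_1,\dots,v_{N-1}$: the positions $r_n(t)=r^0(x_n)+\int_0^t v_n\,ds$ are determined by the $v_n$'s, $v_0\equiv r_0\equiv 0$, and $v_N,r_N$ are given explicitly from \eqref{7.144} and \eqref{7.146}. The right-hand side of \eqref{7.138} is a smooth rational function of the $r_n$'s and $v_n$'s on the open set where $r_n>0$ and $r_n-r_{n-1}>0$ for all $n$, and by $\|r_x(0)-1\|_\infty\le\epsilon$ the initial data lie well inside this set. Picard--Lindel\"of then gives a unique $C^1$ solution on a maximal interval $[0,T_N)$, which must leave every compact subset of the admissible set as $t\to T_N^-$ if $T_N<\infty$.

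\textbf{Step 2 ($N$-uniform a priori bound).} This is the core. I would mirror the continuous computations --- Lemma \ref{lem7.1}, Lemma \ref{lem7.2}, and the higher-order estimates --- on the grid: multiply the $n$-th equation by the discrete analogue of the multiplier used in the corresponding continuous identity (e.g.\ $h(x_n+x_nw_n)^2x_nw_{n,t}$ with $w_n=r_n/x_n-1$ for the basic energy, $h(r_n^3-x_n^3)$ for the elliptic-type bound, and a discrete version of $\rho_\mu^{2\gamma-1}G_x$ for the top-order bound), sum over $n$, and use discrete summation by parts. The discrete boundary contributions cancel exactly as in the continuous case thanks to \eqref{7.144}, and the mesh versions of the Hardy inequality (Lemma \ref{lem3.1}) control the endpoints $x=0$ and $x=R_\mu$. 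Wherever the continuous proof invoked the coercivity of $\langle L_\mu(xw),(xw)\rangle_X$ (valid because $n^-(L_\mu)=0$) to absorb the Taylor remainders, I would use the discrete quadratic form, whose coercivity degrades by at most $o(1)$ under refinement and by at most $o(1)$ for $E_N$ small. This yields $N$-independent constants $C_0,C_1>0$ such that, as long as $E_N(s)\le C_1$ on $[0,t]$,
\begin{align}\label{discrete-di}
\frac{d}{dt}E_N(t)\le C_0\big(1+E_N(t)\big)E_N(t).
\end{align}

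\textbf{Step 3 (continuation).} Shrinking $\epsilon$ so that $2C_0E(0)<C_1$ (using $E_N(0)\le 2E(0)$ for $N$ large, since $E_N(0)\to E(0)$), \eqref{discrete-di} keeps $E_N$ below $2C_0E(0)$ until a time $T_*$; integrating $dE_N/[E_N(1+E_N)]$ and tracking the two competing terms gives the stated lower bound $T_*\ge\min\{C''/\sqrt{2C'E(0)},\,1/(C'(1+2C'E(0)))\}$. On $[0,T_*]$ the non-degeneracy $r_n>0$, $r_n-r_{n-1}>0$ persists because it is controlled by $E_N$, so $T_N>T_*$ and the solution stays admissible; this simultaneously gives $E_N(t)\le C'E_N(0)$ on $[0,T_*]$, and uniqueness is part of Picard--Lindel\"of.

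The hard part will be Step 2: performing the discrete summation by parts so the boundary terms cancel precisely as in the continuous computation, verifying the discrete Hardy inequalities with $N$-independent constants, and --- above all --- reproducing the absorption of the higher-order remainder by the discrete analogue of $\langle L_\mu(xw),(xw)\rangle_X$ so that its positivity persists uniformly in $N$. The degenerate weights $\rho_\mu\sim(R_\mu-x)^{1/(\gamma-1)}$ near the vacuum boundary and the coordinate singularity at $x=0$ demand the same careful treatment as in \cite{LTXZZH20162}, now in discretized form.
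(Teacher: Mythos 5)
Your Steps 1 and 3 have the same skeleton as the paper (Picard--Lindel\"of on the closed ODE system, then a uniform-in-$N$ bound and continuation), but the core of your Step 2 is not what the paper does, and as written it contains a genuine gap. You propose to discretize the \emph{stability} estimates (Lemmas \ref{lem7.1}, \ref{lem7.2} and the $G$-based high-order estimate), whose closure rests on the coercivity of $\langle L_{\mu}(xw),(xw)\rangle_X$, and you assert that the corresponding discrete quadratic form stays coercive ``up to $o(1)$'' uniformly in $N$, together with mesh Hardy inequalities with $N$-independent constants. Neither claim is justified: the continuum coercivity comes from the spectral hypothesis $n^{-}(L_{\mu})=0$ via compact-embedding arguments in the degenerately weighted spaces ($\rho_{\mu}\sim(R_{\mu}-x)^{1/(\gamma-1)}$ near the boundary, $x^{2}$ near the origin), and it does not transfer automatically to arbitrary grid functions with constants independent of $h$; proving it would be a substantial separate piece of work. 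The paper avoids this entirely: for local existence it only proves the crude inequality $E_N(t)\leq CE_N(0)+C\int_0^t[E_N(s)+E_N^2(s)]\,ds$ (the analogue of Lemma A.2 in \cite{LTXZZH20162}), obtained by time-differentiating the discrete momentum equation, multiplying by $\dot v_n$-type multipliers (with the cutoff $\psi$ near the origin), summing by parts, and bounding the pressure and gravity contributions brutally by $E_N+E_N^2$ via Cauchy--Schwarz, e.g. $\sum_n P_{n+1}^2x_n^2\lesssim E_N^2$ — no positivity of $L_\mu$ is used at this stage, and it is unnecessary for a local-in-time bound.

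A second, smaller inaccuracy is in Step 3: integrating $\frac{d}{dt}E_N\le C_0(1+E_N)E_N$ yields only the Gronwall-type constraint $T_*\lesssim 1/(C'(1+2C'E(0)))$; the other term $C''/\sqrt{2C'E(0)}$ in the stated minimum does not come from that ODE. It comes from the separate requirement, explicit in the paper, that $T\sup_{s\le T}\sqrt{E_N(s)}\le\epsilon'\ll1$: only $\|r_x(0)-1\|_\infty$ is assumed small, and the deformation at later times is controlled through $|\frac{r_n-r_{n-1}}{h}-1|\le |\frac{r_n(0)-r_{n-1}(0)}{h}-1|+\int_0^t|\frac{v_n-v_{n-1}}{h}|\,ds\lesssim\epsilon+t\sup\sqrt{E_N}$, which is what keeps the discrete densities and the denominators $r_n-r_{n-1}$ nondegenerate and the constants in the energy estimate admissible. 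Your remark that nondegeneracy ``persists because it is controlled by $E_N$'' conflates these two constraints; you need to state and track the time restriction explicitly, since it is exactly what produces the first term of the lower bound for $T_*$.
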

\begin{proof}
From the classical ordinary differential equation theory, the Cauchy problem \eqref{7.138}-\eqref{7.139} has a solution on $[0,T_N]$. Without lose of generality, let $T_N>0$ be the maximum existence time.  Following the ideas in \cite{LTXZZH20162}, we need to prove  there exists a constant $C>0$ independent of $N$ and $t$ such that
\begin{align*}
E_N(t)\leq C\left(E_N(0)+\int^t_0\left(E_N(s)+E^2_N(s)\right)ds\right),\forall t\in[0,T],
\end{align*}
for some $T$ satisfying
\begin{align*}
T\sup_{s\in[0,T]}\sqrt{E_N(s)}\ll 1,\quad T<T_N.
\end{align*}

Compared to (A.19) with \cite{LTXZZH20162}, we have
\begin{align*}
\rho_n\left(\frac{x_n}{r_n}\right)^2\frac{d^2 v_n}{dt^2}+\frac{1}{h}\left(\left(\frac{d}{dt}B_n-\frac{d}{dt}B_{n+1}\right)+4\nu_1\left(\frac{\frac{dv_{n-1}}{dt}}{r_{n-1}}-\frac{\frac{dv_{n}}{dt}}{r_{n}}\right)\right)=\frac{1}{h}(P_{n+1}-P_{n})+e_n,
\end{align*}
where
\begin{align*}
P_n=P'\left(\frac{x^2_{n-1}\rho_n h}{r^2_{n-1}(r_{n}-r_{n-1})}\right)\frac{x^2_{n-1}\rho_n h}{r^2_{n-1}(r_{n}-r_{n-1})}\left(2\frac{v_{n-1}}{r_{n-1}}+\frac{v_n-v_{n-1}}{r_n-r_{n-1}}\right).
\end{align*}
Consider
\begin{align}\label{7.158}
&\sum_{n=1}^{N-1}\frac{1}{h}(P_{n+1}-P_n)r^2_{n-1}\frac{dv_{n}}{dt},\quad\sum_{n=1}^{N-1}\frac{1}{h}(P_{n+1}-P_n)\psi_n\frac{dv_{n}}{dt},\\
&\frac{P(\rho_{n+1})-P(\rho_n)}{h}-\frac{1}{h}\left(P\left(\frac{x^2_n\rho_{n+1}h}{r^2_n(r_{n+1}-r_n)}\right)-P\left(\frac{x^2_{n-1}\rho_{n}h}{r^2_{n-1}(r_{n}-r_{n-1})}\right)\right),\label{7.159}
\end{align}
where $\psi$ is truncation function defined on $[0,R_\mu]$ satisfying
\begin{align*}
\psi=1\ \textrm{on}\ \left[0,\frac{R_\mu}{4}\right],\ \psi=0\ \textrm{on}\ \left[\frac{R_\mu}{2},R_\mu\right],\ |\psi'|\leq \frac{8}{R_\mu}.
\end{align*}
Denote $\psi_n=\psi(x_n)$.

There holds
\begin{align*}
\sum_{n=1}^{N-1}\frac{1}{h}(P_{n+1}-P_n)r^2_{n-1}\frac{dv_{n}}{dt}
=&\sum_{n=1}^{N-1}\frac{1}{h}P_{n+1}\left(r^2_{n-1}\frac{dv_{n}}{dt}-r^2_n\frac{dv_{n+1}}{dt}\right)\\
=&\sum_{n=1}^{N-1}P_{n+1}\left(r_n^2\frac{\frac{dv_{n}}{dt}-\frac{dv_{n+1}}{dt}}{h}+\frac{r^2_{n-1}-r^2_n}{h}\frac{dv_{n}}{dt}\right)\\
\lesssim&\frac{1}{4}\sum_{n=1}^{N-1}\left(x^2_n\left(\frac{\frac{dv_{n+1}}{dt}-\frac{dv_{n}}{dt}}{h}\right)^2+\left(\frac{dv_{n}}{dt}\right)^2\right)+\sum_{n=1}^{N-1}P^2_{n+1}x^2_n
\end{align*}
and
\begin{align*}
\sum_{n=1}^{N-1}P^2_{n+1}x^2_n\lesssim \sum_{n=1}^{N-1}x^2_n(P'(\rho_n)\rho_n E_N(t))^2\lesssim E^2_N(t)
\end{align*}
provided that $\|r_x^0-1\|_{\infty}< \varepsilon\ll 1$.
 The estimate of the second term in \eqref{7.158} is obtained similarly.

To estimate the term \eqref{7.159}, we notice that
\begin{align*}
&\frac{P(\rho_{n+1})-P(\rho_n)}{h}-\frac{1}{h}\left(P\left(\frac{x^2_n\rho_{n+1}h}{r^2_n(r_{n+1}-r_n)}\right)-P\left(\frac{x^2_{n-1}\rho_{n}h}{r^2_{n-1}(r_{n}-r_{n-1})}\right)\right)\\
\lesssim&\frac{P'(\rho_{n+1})\rho_{n+1}}{h}\left(\frac{x^2_n}{r^2_n}\frac{h}{r_{n+1}-r_n}-1\right)+\frac{P'(\rho_{n})\rho_{n}}{h}\left(\frac{x^2_{n-1}}{r^2_{n-1}}\frac{h}{r_{n}-r_{n-1}}-1\right)\\
\lesssim&\frac{\rho_{n+1}}{h}\left(\left|\frac{r_n}{x_n}-1\right|+\left|\frac{r_{n+1}-r_n}{h}-1\right|\right)+\frac{\rho_{n}}{h}\left(\left|\frac{r_{n-1}}{x_{n-1}}-1\right|+\left|\frac{r_{n}-r_{n-1}}{h}-1\right|\right).
\end{align*}
The details of rest proof are similar to Lemma A.2 in \cite{LTXZZH20162} which we omit.
\end{proof}

Due to the above Lemma, we obtain the unique local strong solution of \eqref{5.16}-\eqref{7.31} with initial data $(r^0,v^0)$ by the similar arguments  in \cite{LTXZZH20162}.

\section{Nonlinear instability under the spherically
symmetric perturbation}\label{Nonlinearinstability}
In this section, we consider  the unstable case $\frac{dM_\mu}{d\mu}\neq 0$ and $n^-(L_{\mu})>0$. The nonlinear estimates are established by adopting the ideas  from \cite{JJTI2013}. However, our proof doesn't depend on the polytropic case of $\frac{6}{5}<\gamma<\frac{4}{3}$.  Fortunately, we can use a similar variational structure to deal with the maximum eigenvalue problem to obtain the linear estimates by $n^-(L_{\mu})>0$. To obtain the nonlinear energy estimates, we give an additional suitable regular assumption of the pressure. Then, a bootstrap argument  shows that all of energy estimates grow no faster than the linear growth rate. At last, one can choose a family of initial data for the nonlinear problem to show the Lyapunov instability.

\subsection{The linear unstable estimates}
The following theorem provides the linear instability of the non-rotating gaseous stars of the Navier-Stokes-Poisson system.

\begin{theorem}\label{thm6.1}
Assume that $\frac{dM_\mu}{d\mu}\neq 0$ and $n^-(L_{\mu})>0$.  Then, the non-rotating stars of the  Navier-Stokes-Possion system \eqref{1.4}-\eqref{1.5} is linearly unstable. The total algebraic multiplicity of  eigenvalues on the right half plane to the eigenvalue problem \eqref{3.7} is  as the same as  $n^-(L_{\mu})$. Moreover, we obtain
\begin{align}
&e^{-2\lambda t}\int_0^{R_\mu}\left( r^2\rho_\mu |u|^2+\frac{\nu_2}{r^2}|(r^2u)_r|^2+\frac{4\nu_1}{3}r^4\left|\left(\frac{u}{r}\right)_r\right|^2+\Phi''(\rho_\mu )r^2|\rho|^2\right)dr\nonumber\\
\lesssim&\int_0^{R_\mu} \Big(r^2\rho_\mu |u|^2+\frac{\nu_2}{r^2}|(r^2u)_r|^2+\frac{4\nu_1}{3}r^4\left|\left(\frac{u}{r}\right)_r\right|^2+\frac{\Phi''(\rho_\mu )}{r^2}\left|(r^2\rho_\mu u)_r\right|^2\nonumber\\
&+\Phi''(\rho_\mu )r^2|\rho|^2+r^2\rho_\mu |u_t|^2\Big)dr\Big|_{t=0},\label{6.3}
\end{align}
where $\lambda$ is the maximum of eigenvalues on the right half plane.
\end{theorem}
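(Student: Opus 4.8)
The plan is to combine the abstract Kelvin--Tait--Chetaev result (Theorem \ref{thm: KTC}) with a direct energy inequality for the second-order system \eqref{3.4}. First I would verify that when $n^-(L_\mu)>0$ all the hypotheses (A1)--(A7) checked in Section \ref{s3} still apply verbatim — they do not use the sign of $n^-(L_\mu)$ — so Theorem \ref{thm: KTC} gives $n^u = n^-(L_\mu)>0$ for the operator $A_\mu$. In particular $A_\mu$ has at least one eigenvalue with positive real part, which already establishes linear instability and the count of unstable eigenvalues. Moreover, by the argument in the proof of Theorem \ref{thm: KTC} (the computation $2\,\Re\lambda\,\Im\lambda\langle u,u\rangle_X + \Im\lambda\langle D_\mu u,u\rangle_X = 0$ forces $\Im\lambda=0$ when $\Re\lambda>0$), every unstable eigenvalue is real and positive; let $\lambda>0$ be the largest one. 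This $\lambda$ is the growth rate appearing in \eqref{6.3}.

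Next I would derive the differential inequality. Write $v=u_t$ and consider the energy
\begin{align*}
\mathcal{E}(t) := \langle u_t,u_t\rangle_{X_\mu} + \langle (L_\mu+m) u,u\rangle_{X_\mu} + \langle D_\mu u,u\rangle_{X_\mu}
\end{align*}
for a fixed $m>0$ as in (A7), chosen large enough that $\langle (L_\mu+m)u,u\rangle_{X_\mu}\ge 0$ and controls $\int \Phi''(\rho_\mu)r^2|\partial_r(r^2\rho_\mu u)|^2/r^2\,dr$ (cf. Lemma \ref{lem4.1}), while $\langle D_\mu u,u\rangle_{X_\mu}$ reproduces the two viscous terms in \eqref{6.3} by the formula in Lemma \ref{lem4.3}. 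Differentiating \eqref{3.4} and pairing with $u_t$ gives $\frac12\frac{d}{dt}\big[\langle u_t,u_t\rangle+\langle L_\mu u,u\rangle\big] = -\langle D_\mu u_t,u_t\rangle \le 0$. The remaining two terms of $\mathcal{E}$ are handled by pairing \eqref{3.4} with $u$ and with $D_\mu u$ respectively, producing cross terms that I would bound, using the $D_\mu$-boundedness of $L_\mu$ (Lemma \ref{lem3.3}) and Young's inequality, by $C\,\mathcal{E}(t)$. Altogether one gets $\frac{d}{dt}\mathcal{E}(t)\le 2\lambda\,\mathcal{E}(t)$ after possibly adjusting $m$ and absorbing constants, whence $\mathcal{E}(t)\le e^{2\lambda t}\mathcal{E}(0)$. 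Translating back through $\rho = -\frac{1}{r^2}\partial_r(r^2\rho_\mu u)\,$ integrated in time (equivalently, using the continuity equation \eqref{3.1} to write $\partial_t(r^2\rho) = -\partial_r(r^2\rho_\mu u)$ and the relation $\langle L_\mu u,u\rangle_{X_\mu} = \int \Phi''(\rho_\mu)r^2|\rho|^2\,dr - \int 4\pi r^2\rho_\mu^2 u^2\,dr$ from Lemma \ref{lem4.3}) converts $\mathcal{E}$ into the left-hand side of \eqref{6.3}, modulo the lower-order term $\int 4\pi r^2\rho_\mu^2 u^2\,dr$ which is dominated by $m\langle u,u\rangle_{X_\mu}$ and hence already present in $\mathcal{E}(0)$; that accounts for why the right-hand side of \eqref{6.3} carries the extra term $\frac{\Phi''(\rho_\mu)}{r^2}|\partial_r(r^2\rho_\mu u)|^2$ absent on the left.

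The main obstacle I anticipate is the energy estimate rather than the spectral count: closing $\frac{d}{dt}\mathcal{E}\le 2\lambda\mathcal{E}$ with the sharp constant $2\lambda$ (and not merely some larger constant) requires that the indefinite operator $L_\mu$ be handled carefully. The clean way is to project onto the $\le n^-(L_\mu)$-dimensional unstable/center subspace and its complement: on the spectral subspace of $A_\mu$ associated with $\Re\lambda\ge 0$ the solution is an explicit finite sum of $t^k e^{\lambda_i t}$ terms bounded by $e^{2\lambda t}$ times initial data, while on the complementary invariant subspace — where $L_\mu$ restricted is positive (this is exactly the content of the decomposition $X_\mu = X_{\mu-}\oplus X_{\mu+}$ together with the proof of Lemma \ref{lem2.3}) — the energy $\langle u_t,u_t\rangle+\langle L_\mu u,u\rangle+\langle D_\mu u,u\rangle$ is monotone nonincreasing by \eqref{2.12}-type dissipation. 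Summing the two contributions and using equivalence of norms on the finite-dimensional piece yields \eqref{6.3}. I would present it this way to avoid the delicate cross-term bookkeeping; the only genuinely technical point left is checking that the decomposition of $X_\mu$ into $A_\mu$-invariant subspaces respects the energy norm, which follows from self-adjointness of $L_\mu$ and positivity of $D_\mu$.
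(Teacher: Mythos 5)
Your spectral count is fine and matches the paper: hypotheses (A1)--(A7) do not use the sign of $n^{-}(L_{\mu})$, so Theorem \ref{thm: KTC} gives $n^{u}=n^{-}(L_{\mu})>0$, and the reality of unstable eigenvalues follows as you say. The gap is in the growth estimate \eqref{6.3}. Your first route (an energy $\mathcal{E}$ with cross terms bounded via $D_{\mu}$-boundedness of $L_{\mu}$ and Young's inequality) only yields $\frac{d}{dt}\mathcal{E}\le C\mathcal{E}$ for some unquantified $C$, not the sharp rate $2\lambda$ — you acknowledge this — and your fallback does not close it. The decomposition $X_{\mu}=X_{\mu-}\oplus X_{\mu+}$ concerns the quadratic form of $L_{\mu}$ on $X_{\mu}$; it is \emph{not} an $A_{\mu}$-invariant splitting of the phase space, and nothing in Lemma \ref{lem2.3} (which only shows $L_{\mu}<0$ on the span of components of unstable generalized eigenvectors) gives you that $\langle L_{\mu}u,u\rangle\ge0$ on the complementary spectral subspace of the non-self-adjoint operator $A_{\mu}$. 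Moreover the "unstable/center" spectral projection you invoke is problematic: only the part with $\operatorname{Re}\lambda>0$ is known to be finite-dimensional, while the imaginary axis may carry essential spectrum, so boundedness of the evolution on the complement — with a constant, in the energy norm that also controls the viscous and density terms of \eqref{6.3} — is exactly the nontrivial point, and "self-adjointness of $L_{\mu}$ and positivity of $D_{\mu}$" do not deliver it.

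What the paper actually uses (following Jang--Tice) is the variational characterization of the maximal growth rate of the pencil \eqref{3.7}: since $\lambda$ is the largest eigenvalue with positive real part,
\begin{align*}
\lambda^{2}\langle u,u\rangle_{X}+\lambda\langle D_{\mu}u,u\rangle_{X}+\langle L_{\mu}u,u\rangle_{X}\ \ge\ 0\qquad\text{for all admissible }u .
\end{align*}
This pointwise-in-time inequality is the missing ingredient in your argument: combined with the energy identity \eqref{6.4} (obtained by pairing \eqref{3.4} with $r^{2}\rho_{\mu}u_{t}$ and integrating in time), it bounds the indefinite term $-\langle L_{\mu}u,u\rangle$ by $\lambda^{2}\|u\|_{X}^{2}+\lambda\langle D_{\mu}u,u\rangle_{X}$ with exactly the constants needed, and a Gronwall argument (Lemma \ref{lem6.3}) then gives $\|u\|_{X}^{2}+\int_{0}^{t}\|u\|_{1}^{2}\,ds\lesssim e^{2\lambda t}(\text{data})$ together with control of $\int\frac{\Phi''(\rho_{\mu})}{r^{2}}|\partial_{r}(r^{2}\rho_{\mu}u)|^{2}dr$; the density term in \eqref{6.3} then follows from $\partial_{t}\rho=-\frac{1}{r^{2}}\partial_{r}(r^{2}\rho_{\mu}u)$ and one more time integration. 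Without this variational inequality (or an equivalent sharp resolvent/semigroup bound on the stable part, which would require a separate proof), your proposal does not establish \eqref{6.3} with the rate $e^{2\lambda t}$.
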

\begin{proof}
From Theorem \ref{nEPeqnNSP} or \ref{thm: KTC}, we obtain the linearly instability of the non-rotating stars. We only need to prove the estimate \eqref{6.3}. Adopting the ideas  from \cite{JJTI2013}, the arguments are as follows.
\end{proof}

\begin{lemma}
There holds
\begin{align}
&\frac{1}{2}\frac{d}{dt}\int_0^{R_\mu}   r^2\rho_\mu |u_t|^2dr+\frac{4\nu_1}{3}\int_0^{R_\mu}   r^4\left|\left(\frac{u_t}{r}\right)_r\right|^2dr+\nu_2\int_0^{R_\mu}   \frac{1}{r^2}|(r^2u_t)_r|^2dr\nonumber\\
&+\frac{1}{2}\frac{d}{dt}\int_0^{R_\mu}   \left(\frac{\Phi''(\rho_\mu )}{r^2}|(r^2\rho_\mu  u)_r|^2-4\pi r^2\rho_\mu ^2|u|^2\right)dr=0.\label{6.4}
\end{align}
\end{lemma}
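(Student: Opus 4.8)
The plan is to read off the stated identity \eqref{6.4} as the basic energy balance for the second-order formulation \eqref{3.4}, i.e. $u_{tt}+D_\mu u_t+L_\mu u=0$, tested against $u_t$ in $X_\mu$. Concretely, I would multiply \eqref{3.4} by $r^2\rho_\mu u_t$ and integrate over $(0,R_\mu)$, that is, form $\langle u_{tt},u_t\rangle_{X_\mu}+\langle D_\mu u_t,u_t\rangle_{X_\mu}+\langle L_\mu u,u_t\rangle_{X_\mu}=0$. The first pairing is $\frac{1}{2}\frac{d}{dt}\langle u_t,u_t\rangle_{X_\mu}=\frac{1}{2}\frac{d}{dt}\int r^2\rho_\mu|u_t|^2\,dr$. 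For the third, $L_\mu$ is self-adjoint (Lemma \ref{lem4.2} together with Lemma \ref{lem4.1}) and $u(t)$ remains in $\mathrm{Dom}(L_\mu)$ along the solution, so $\frac{d}{dt}\langle L_\mu u,u\rangle_{X_\mu}=\langle L_\mu u_t,u\rangle_{X_\mu}+\langle L_\mu u,u_t\rangle_{X_\mu}=2\langle L_\mu u,u_t\rangle_{X_\mu}$, hence $\langle L_\mu u,u_t\rangle_{X_\mu}=\frac{1}{2}\frac{d}{dt}\langle L_\mu u,u\rangle_{X_\mu}$. This already yields the abstract identity $\frac{1}{2}\frac{d}{dt}\big(\langle u_t,u_t\rangle_{X_\mu}+\langle L_\mu u,u\rangle_{X_\mu}\big)=-\langle D_\mu u_t,u_t\rangle_{X_\mu}$, which is exactly \eqref{2.12} specialized to $(D,L,X)=(D_\mu,L_\mu,X_\mu)$.

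To reach \eqref{6.4} verbatim it then only remains to substitute the explicit integral expressions for the two remaining quadratic forms, both recorded in Lemma \ref{lem4.3}: applying that lemma with $u$ replaced by $u_t$ gives $\langle D_\mu u_t,u_t\rangle_{X_\mu}=\int\big(\frac{\nu_2}{r^2}|\partial_r(r^2u_t)|^2+\frac{4\nu_1}{3}r^4|\partial_r(u_t/r)|^2\big)\,dr$, and directly $\langle L_\mu u,u\rangle_{X_\mu}=\int\big(\frac{\Phi''(\rho_\mu)}{r^2}|\partial_r(r^2\rho_\mu u)|^2-4\pi r^2\rho_\mu^2|u|^2\big)\,dr$. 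Inserting these into the abstract identity and rearranging the terms reproduces \eqref{6.4}.

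The one point that genuinely needs attention is the collection of endpoint contributions generated by the integrations by parts underlying these quadratic-form identities. At the symmetry center $r=0$ the geometric weights $r^2$, $r^4$ together with $u_t(0,t)=0$ (which follows from $u(0,t)=0$) annihilate every boundary term. At the free surface $r=R_\mu$, the boundary term produced by the $D_\mu$-pairing is removed by the stress-free boundary condition $-\frac{4\nu_1}{3}\big(\partial_r u_t-\frac{u_t}{r}\big)-\nu_2\big(\partial_r u_t+\frac{2u_t}{r}\big)=0$, the linearization of \eqref{1.8} encoded in $\mathrm{Dom}(D_\mu)$, which applies to $u_t$ since $u_t(t)\in\mathrm{Dom}(D_\mu)$ along the solution; the boundary term produced by the $L_\mu$-pairing carries the factor $\Phi''(\rho_\mu)\rho_\mu\sim\rho_\mu^{\gamma-1}\to 0$ as $r\to R_\mu$ and hence drops out. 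These are precisely the computations already carried out in the proofs of Lemma \ref{lem4.1} and Lemma \ref{lem4.3}, so I would invoke them rather than redo them. Finally, differentiation under the integral sign and the symmetry step for $L_\mu$ are legitimate for the classical solution supplied by the well-posedness of \eqref{3.4} with data in $\mathrm{Dom}(A_\mu)$, and by a density argument the identity then passes to the less regular solutions used in the subsequent energy estimates; I do not anticipate any obstacle beyond this routine bookkeeping.
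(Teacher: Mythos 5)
Your proposal is correct and follows essentially the same route as the paper: the paper's proof is exactly to multiply $u_{tt}+D_\mu u_t+L_\mu u=0$ by $r^2\rho_\mu u_t$, integrate over $(0,R_\mu)$, and integrate by parts, with the boundary terms removed by the conditions at $r=0$ and the stress-free condition at $r=R_\mu$. You simply make explicit the bookkeeping (self-adjointness of $L_\mu$, the quadratic-form identities of Lemma \ref{lem4.3}, and the vanishing of the endpoint contributions) that the paper leaves implicit.
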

\begin{proof}

Multiplying \eqref{3.4} by $r^2\rho_\mu u_t$ and integrating from $0$ to $R_\mu$, we obtain \eqref{6.4} by integrating by parts.
\end{proof}

\begin{lemma}\label{lem6.3}
Let $\lambda$ be the maximum of eigenvalues on the right half plane to the eigenvalue problem \eqref{3.7}. Then, there exists  a positive constant $m$ satisfying $m\geq 4\pi \sup_{[0,R_\mu]}\rho_\mu$ such that
\begin{align*}
\|u\|^2_{X_\mu}+\int^t_0\|u\|^2_Dds&\leq e^{2\lambda t}\|u(\cdot,0)\|_{X_\mu}^2+\frac{I_7}{2\lambda}(e^{2\lambda t}-1),\\
\int_0^{R_\mu}   \frac{\Phi''(\rho_\mu )}{r^2}|(r^2\rho_\mu  u)_r|^2dr
&\leq I_8+m\left(e^{2\lambda t}\|u(\cdot,0)\|_{X_\mu}^2+\frac{I_7}{2\lambda}(e^{2\lambda t}-1)\right),
\end{align*}
where
\begin{align*}
&\|u\|^2_D=\langle Du,u\rangle_{X_\mu}=\int_0^{R_\mu}  \left(\frac{\nu_2}{r^2}|(r^2u)_r|^2+\frac{4\nu_1}{3}r^4\left|\left(\frac{u}{r}\right)_r\right|^2\right)dr,\\
&I_7=\frac{2I_8}{\lambda}+2\int_0^{R_\mu}  \left(\frac{\nu_2}{r^2}|(r^2u(r,0))_r|^2+\frac{4\nu_1}{3}r^4\left|\left(\frac{u(r,0)}{r}\right)_r\right|^2\right)dr,\\
&I_8=\int_0^{R_\mu}   r^2\rho_\mu |u_t(r,0)|^2dr+\int_0^{R_\mu}  \frac{\Phi''(\rho_\mu )}{r^2}|(r^2\rho_\mu  u(r,0))_r|^2dr.
\end{align*}
\end{lemma}
\begin{proof}
Integrating \eqref{6.4} from $0$ to $t$, we have
\begin{align}
\notag&\int_0^{R_\mu}   r^2\rho_\mu |u_t|^2dr+\frac{8\nu_1}{3}\int^t_0\int_0^{R_\mu}   r^4\left|\left(\frac{u_t}{r}\right)_r\right|^2dr
+2\nu_2\int^t_0\int_0^{R_\mu}   \frac{1}{r^2}|(r^2u_t)_r|^2dr\\
\notag&+\int_0^{R_\mu}   \left(\frac{\Phi''(\rho_\mu )}{r^2}|(r^2\rho_\mu  u)_r|^2-4\pi r^2\rho_\mu ^2|u|^2\right)dr\\
=&\int_0^{R_\mu}   \left(r^2\rho_\mu |u_t|^2+\frac{\Phi''(\rho_\mu )}{r^2}|(r^2\rho_\mu  u)_r|^2-4\pi r^2\rho_\mu ^2|u|^2\right)dr\Bigg|_{t=0}.\label{6.16}
\end{align}
Because  $\lambda$ is the maximum of eigenvalues on the right half plane, by \eqref{3.7}, it obtains that
\begin{align}
\notag&\lambda^2\int_0^{R_\mu}  r^2\rho_\mu |u|^2dr+\lambda\int_0^{R_\mu}  \left(\frac{\nu_2}{r^2}|(r^2u)_r|^2+\frac{4\nu_1}{3}r^4\left|\left(\frac{u}{r}\right)_r\right|^2\right)dr\\
&+\int_0^{R_\mu}  \left(\frac{1}{r^2}\Phi''(\rho_\mu )|(r^2 \rho_\mu u)_r|^2-4\pi r^2\rho_\mu ^2|u|^2\right)dr\geq 0.\label{8.4}
\end{align}
Hence, there holds
\begin{align*}
&\int_0^{R_\mu}   r^2\rho_\mu |u_t|^2dr+\frac{8\nu_1}{3}\int^t_0\int_0^{R_\mu}   r^4\left|\left(\frac{u_t}{r}\right)_r\right|^2dr
+2\nu_2\int^t_0\int_0^{R_\mu}   \frac{1}{r^2}|(r^2u_t)_r|^2dr\\
\leq&\lambda^2\int_0^{R_\mu}  r^2\rho_\mu |u|dr+\lambda\int_0^{R_\mu}  \left(\frac{\nu_2}{r^2}|(r^2u)_r|^2+\frac{4\nu_1}{3}r^4\left|\left(\frac{u}{r}\right)_r\right|^2\right)dr\\
&+\left(\int_0^{R_\mu}   r^2\rho_\mu |u_t|^2dr+\int_0^{R_\mu}  \frac{\Phi''(\rho_\mu )}{r^2}|(r^2\rho_\mu  u)_r|^2dr\right)\Bigg|_{t=0}.
\end{align*}
Rewriting it, we obtain
\begin{align}\label{6.22}
\|u_t\|^2_{X_\mu}+2\int^t_0\|u_t\|_D^2ds\leq I_8+\lambda^2\|u\|_{X_\mu}^2+\lambda\|u\|^2_D.
\end{align}
Note that
\begin{align*}
\lambda\|u\|_{X_\mu}^2=\lambda\|u(\cdot,0)\|_{X_\mu}^2+2\lambda\int^t_0\langle u,u_t\rangle_{X_\mu} ds
\leq\lambda\|u(\cdot,0)\|_{X_\mu}^2+\int^t_0\|u_t\|^2_{X_\mu}ds+\lambda^2\int^t_0\|u\|_{X_\mu}^2ds
\end{align*}
and
\begin{align*}
\lambda\frac{d}{dt}\|u\|^2_{X_\mu}\leq \lambda^2\|u\|_{X_\mu}^2+\|u_t\|^2_{X_\mu}.
\end{align*}
We have
\begin{align*}
\frac{d}{dt}\|u\|^2_{X_\mu}+\|u\|^2_D\leq I_7+2\lambda\|u\|^2_{X_\mu}+2\lambda\int^t_0\|u\|^2_Dds.
\end{align*}
By Gronwall's inequality, it obtains that
\begin{align}\label{6.27}
\|u\|^2_{X_\mu}+\int^t_0\|u\|^2_Dds\leq e^{2\lambda t}\|u(\cdot,0)\|_{X_\mu}^2+\frac{I_7}{2\lambda}\left(e^{2\lambda t}-1\right).
\end{align}
By \eqref{6.22}, we also have
\begin{align*}
\frac{1}{\lambda}\|u_t\|^2_{X_\mu}+\|u\|^2_D\leq e^{2\lambda t}\left(2\lambda\|u(\cdot,0)\|_{X_\mu}^2+I_7\right).
\end{align*}
By \eqref{6.16} and \eqref{6.27}, there exists a constant $m\geq 4\pi \sup_{[0,R_\mu]}\rho_\mu$ such that
\begin{align*}
\int_0^{R_\mu}   \frac{\Phi''(\rho_\mu )}{r^2}|(r^2\rho_\mu  u)_r|^2dr
\leq& I_8+\int_0^{R_\mu}  4\pi r^2\rho_\mu ^2|u|^2dr\\
\leq&I_8+m\left(e^{2\lambda t}\|u(\cdot,0)\|_{X_\mu}^2+\frac{I_7}{2\lambda}(e^{2\lambda t}-1)\right).
\end{align*}
It completes the proof.
\end{proof}

Now, we prove \eqref{6.3}. By Lemma \ref{lem6.3}, we have
\begin{align}
\notag&\int_0^{R_\mu} \left(r^2\rho_\mu |u|^2+\frac{\nu_2}{r^2}|(r^2u)_r|^2+\frac{4\nu_1}{3}r^4\left|\left(\frac{u}{r}\right)_r\right|^2+\frac{\Phi''(\rho_\mu )}{r^2}|(r^2\rho_\mu u)_r|^2\right)dr\\
\notag\lesssim&e^{2\lambda t}\left[\int_0^{R_\mu} \left(r^2\rho_\mu |u|^2+\frac{\nu_2}{r^2}|(r^2u)_r|^2+\frac{4\nu_1}{3}r^4\left|\left(\frac{u}{r}\right)_r\right|^2+\frac{\Phi''(\rho_\mu )}{r^2}|(r^2\rho_\mu u)_r|^2+r^2\rho_\mu |u_t|^2\right)dr\right]_{t=0}\\
=:&e^{2\lambda t}I_9.\label{6.37}
\end{align}
By \eqref{1.4}, there holds
\begin{align*}
\int_0^{R_\mu}\Phi''(\rho_\mu )r^2|\rho_t|^2dr=\int_0^{R_\mu}\frac{\Phi''(\rho_\mu )}{r^2}|(r^2\rho_\mu  u)_r|^2dr.
\end{align*}
By $\partial_t\|u\|\leq \|\partial_t u\|$, we have
\begin{align*}
\partial_t\left(\int_0^{R_\mu}\Phi''(\rho_\mu )r^2|\rho|^2dr\right)^{\frac{1}{2}}\leq\left(\int_0^{R_\mu}\Phi''(\rho_\mu )r^2|\partial_t\rho|^2dr\right)^\frac{1}{2}\lesssim I_9^{\frac{1}{2}}e^{\lambda t}.
\end{align*}
Integrating it from $0$ to $t$, it yields that
\begin{align}\label{6.42}
\left(\int_0^{R_\mu}\Phi''(\rho_\mu )r^2|\rho|^2dr\right)^{\frac{1}{2}}\lesssim e^{\lambda t}\left(\int_0^{R_\mu}\Phi''(\rho_\mu )r^2|\rho(r,0)|^2dr+I_9\right)^{\frac{1}{2}}.
\end{align}
Combining with  \eqref{6.37} and \eqref{6.42}, it completes the proof.

\subsection{Formulation in Lagrangian mass coordinates}\label{sub5.2}
To compare with \cite{JJTI2013} conveniently, we  redefine the symbols of each variable in this part. Let $r=|y|=\sqrt{y_1^2+y_2^2+y_3^2}$ be the radius of the spherical coordinate system in 3-dimensional Euclidean space. We define the following Lagrangian mass coordinates
\begin{align*}
x(r,t):=\int^r_0 4\pi s^2\rho(s,t)ds=\int_{B(0,r)}\rho(y,t)dy.
\end{align*}
From $x_t(r,t)=\int^r_0 4\pi s^2\rho_t (s,t)ds=-\int^r_0(s^2\rho(s,t) u(s,t))_s ds=-r^2\rho(r,t)u(r,t)$, we have $x_t(R(t),t)=0$. It implies that the total mass $M=\int^{R(t)}_0 4\pi s^2\rho(s,t)ds=\int^{R_\mu}_0 4\pi s^2\rho_\mu(s)ds$ is preserved in
time. Then, the domain of $x$ is $[0,M]$.

The  stationary solution $(\rho_\mu(x),0,r_\mu(x))$ of \eqref{1.4}-\eqref{1.5} satisfies
\begin{align*}
4\pi r^2_\mu(x)( P(\rho_\mu))_x +\frac{x}{r^2_\mu(x)}=0,
\end{align*}
where
\begin{align}\label{rmu}
r_\mu(x)=\left(\frac{3}{4\pi}\int^x_0\frac{1}{\rho_\mu(y)}dy\right)^{\frac{1}{3}}.
\end{align}

Now, we linearize \eqref{1.4}-\eqref{1.5} in Lagrangian mass coordinates around the stationary solution $(\rho_\mu,0,r_\mu)$:
\begin{align}\label{8.12}
&\varrho_t+4\pi\rho_\mu^2(r_\mu^2\upsilon)_x= 0,\ x\in(0,M),\\
&\upsilon_t+4\pi r_\mu^2(P'(\rho_\mu)\varrho)_x+\frac{x}{\pi r^5_\mu}\int^x_0\frac{\varrho(y,t)}{\rho_\mu^2(y)}dy=16\pi^2 \nu r^2_\mu(\rho_\mu(r^2_\mu \upsilon)_x)_x,\ x\in(0,M),\label{8.122}
\end{align}
with linearized boundary conditions
\begin{align}
&\upsilon(0,t)=\varrho(M,t)=0,\label{8.152}\\
&P'(\rho_\mu)\varrho-\frac{4\nu_1}{3}\left(4\pi r^2_\mu\rho_\mu\upsilon_x-\frac{\upsilon}{r_\mu}\right)-\nu_2\left(4\pi r^2_\mu\rho_\mu \upsilon_x+\frac{2\upsilon}{r_\mu}\right)\Bigg|_{x=M}=0.\label{8.15}
\end{align}
The details of calculation are stated in the Part I of appendix.

\subsection{Nonlinear estimates for instability}

One can see the variational characterization \eqref{8.4} is equivalent to
\begin{align}
\notag&\lambda\int^M_0\left(\nu_2\rho_\mu|\zeta_x|^2+\frac{4\nu_1}{3}\rho_\mu\left|r^3_\mu\left(\frac{\zeta}{r^3_\mu}\right)_x\right|^2\right)dx+\frac{1}{2}\int^M_0\left(P'(\rho_\mu)\rho^2_\mu|\zeta_x|^2+\frac{(P(\rho_\mu))_x}{\pi r_\mu^3}|\zeta|^2\right)dx\nonumber\\
\geq& -\lambda^2\int^M_0\frac{|\zeta|^2}{16\pi^2r^4_\mu}dx\label{8.6}
\end{align}
for any $\sqrt{\rho_\mu}\zeta_x\in L^2(0,M)$ and $\frac{\zeta}{r^2\sqrt{\rho_\mu}}\in L^2(0,M)$ by setting $\zeta=\sqrt{4\pi}r^2_\mu u$, which generalizes (4) in Theorem 2.1 of \cite{JJTI2013}. 

By the unstable assumptions $\frac{dM_\mu}{d\mu}\neq 0$ and  $n^-(L)>0$, we obtain a growing mode solution to the linearized equations with fastest rate by adopting the variational analysis in \cite{JJTI2013}. We also obtain the  estimates which are similar to those in Theorem 3.2, 3.3 and 3.5 of \cite{JJTI2013}  by replacing $\gamma \rho_\mu^\gamma$, $\gamma K\rho_\mu ^{\gamma-1}$, $\gamma K\rho_\mu ^{\gamma+1}$ by $\frac{1}{K_1}P'(\rho_\mu )\rho_\mu $, $P'(\rho_\mu )$, $P'(\rho_\mu )\rho_\mu^2$ respectively. Hence, we have the following results.
\begin{theorem}
Under the assumptions $\frac{dM_\mu}{d\mu}\neq 0,$ $n^-(L)>0$ and (P1). There exist $\lambda>0$ satisfying \eqref{8.6} and $(\varrho(x),\upsilon(x))$ such that $(e^{\lambda t}\varrho(x),e^{\lambda t}\upsilon(x))$ solve the
linearized Navier-Stokes-Poisson system \eqref{8.12}-\eqref{8.15}. Moreover, by \eqref{8.6}, we know that $\lambda>0$  is  the largest possible growth rate. That is no solution to the linearized system  grows in time at a rate faster than $e^{\lambda t}.$
\end{theorem}

Next, we consider the nonlinear estimates. We additionally assume that\\
(P3) $P(s)\in C^5(0,+\infty)$ and there exists $K_3>0$ such that $$\lim_{s\rightarrow 0^+}|P^{(4)}(s)| s^{4-\gamma_1}=\lim_{s\rightarrow 0^+}|P^{(5)}(s)| s^{5-\gamma_1}=K_3.$$

  The system of the small perturbations $\sigma,v$ around the stationary solution  is rewritten as follows:
\begin{align}
&\frac{\rho_\mu}{\rho}\left(\frac{\sigma}{\rho_\mu}\right)_t+4\pi\rho(r^2v)_x=0,\label{8.190}\\
&v_t+4\pi r^2\left(P'(\rho_\mu)\sigma+\frac{1}{2}P''(a^*\rho_\mu)\sigma^2\right)_x+x\left(\frac{1}{r^2}-\frac{r^2}{r^4_\mu}\right)=16\pi^2\nu r^2(\rho(r^2v)_x)_x,\label{8.19}
\end{align}
where $a_*$ is the smooth bounded remainder such that
\begin{align*}
P(\rho)-P(\rho_\mu)=P'(\rho_\mu )\sigma+\frac{1}{2}P''(a^*\rho_\mu )\sigma^2
\end{align*}
 and $r(x,t)$ is determined by
\begin{align*}
r^3(x,t)=\frac{3}{4\pi}\int^x_0\frac{1}{\rho_\mu(y)+\sigma(y,t)}dy,\ r_t(x,t)=v(x,t).
\end{align*}
The boundary conditions are
\begin{align}
\nu_2\rho(r^2v)_x+\frac{4\nu_1}{3}\rho r^3\left(\frac{x}{r}\right)_x\Big|_{x=M}=0,\ \rho(M,t)=v(0,t)=0.\label{BC}
\end{align}

Define
\begin{align*}
&\mathscr{E}^{0,\sigma}=\frac{1}{2}\int^M_0\frac{P'(\rho_\mu )}{\left(1+\frac{\sigma}{\rho_\mu }\right)^2}\left|\frac{\sigma}{\rho_\mu }\right|^2dx,\\
&\mathscr{D}^{1,\sigma}=\int^M_0 16\pi^2r^4 P'(\rho_\mu )\rho_\mu \left|\left(\frac{\sigma}{\rho_\mu }\right)_x\right|^2dx,\\
&\mathscr{D}^{1,\sigma}_b=\int^M_0 16\pi^2r^2 P'(\rho_\mu ) \left|\left(\frac{\sigma}{\rho_\mu }\right)_x\right|^2dx,\\
&\mathscr{E}^2=\frac{1}{2}\int^M_0|v_t|^2dx+\frac{1}{2}\int^M_0\frac{P'(\rho_\mu )}{\left(1+\frac{\sigma}{\rho_\mu }\right)^2}\left|\left(\frac{\sigma}{\rho_\mu }\right)_t\right|^2dx,\\
&\mathscr{D}^4=4\pi\int^M_0 r^2 P'(\rho_\mu )\rho_\mu \rho\left|\left(r^4\left(\frac{\sigma}{\rho_\mu }\right)_x\right)_x\right|^2dx,\\
&\mathscr{E}^{2,\sigma}=\frac{1}{2}\int^M_0\frac{P'(\rho_\mu )}{\left(1+\frac{\sigma}{\rho_\mu }\right)^2}\left|\left(\frac{\sigma}{\rho_\mu }\right)_t\right|^2dx,\\
&\mathscr{E}^{1+2i}=\frac{1}{2}\int^M_0|\partial^i_t v|^2dx+\frac{1}{2}\int^M_0\frac{P'(\rho_\mu)}{\left(1+\frac{\sigma}{\rho_\mu}\right)^2}\left|\partial_t^i\left(\frac{\sigma}{\rho_\mu}\right)\right|^2dx,\ i=2,3,
\end{align*}
which generalize those in  \cite{JJTI2013}. There is no change with the other terms of (4-11)-(4-18) in \cite{JJTI2013}. Following the ideas in Section 4 of \cite{JJTI2013}, we obtain the same estimates like Lemma 4.1-4.9 in \cite{JJTI2013} by slight changes.
By adopting a continuity method in Section 5 of \cite{JJTI2013}, we also have the nonlinear instability of the Lane-Emden gaseous star modeled by the Navier-Stokes-Poisson system.
\begin{theorem}
\label{thm:nonlinear instability}Under the assumptions $\frac{dM_\mu}{d\mu}\neq 0,$ $n^{-}(L)>0$, (P1), (P2) and (P3), there exist $\varsigma_{0}>0,C>0$ and $0<\iota_{0}<\varsigma_{0}$
such that for any $0<\iota\leq\iota_{0}$, there exists a family of solutions
$(\sigma^{\iota},v^{\iota})$ to the Navier-Stokes-Poisson system \eqref{8.190}-\eqref{8.19} with boundary conditions \eqref{BC}
satisfying
\[
\sqrt{\mathscr{E}}(0)\leq C\iota,\quad\sup_{0\leq t\leq T^{\iota}}%
\sqrt{\mathscr{E}^{0}}(t)\geq\sup_{0\leq t\leq T^{\iota}}\sqrt
{\mathscr{E}^{0,\sigma^{\iota}}(T^{\iota})+\mathscr{E}^{0,v^{\iota}}(T^{\iota
})}\geq\varsigma_{0}.
\]
Here, $T^{\iota}$ is given by $T^{\iota}=\frac{1}{\lambda}\ln\frac{\varsigma_{0}%
}{\iota}$. $\mathscr{E}^{0,v^{\iota}}$ and $\mathscr{E}^{0,\sigma^{\iota}}$
are defined by
\begin{align*}
  \mathscr{E}^{0,v^{\iota}}=\frac{1}{2}\int_{0}^{M}|v|^{2}dx,\
  \mathscr{E}^{0,\sigma^{\iota}}=\frac{1}{2}\int_{0}^{M}\frac{P^{\prime}%
(\rho_{\mu})}{\left(1+\frac{\sigma}{\rho_{\mu}}\right)^2}\left|\frac{\sigma}{\rho_{\mu}}\right|^{2}dx.
\end{align*}

\end{theorem}

\section*{Appendix}\label{s6}
In this section, we consider the equivalence of  linearized system  \eqref{3.1}-\eqref{3.3} in different coordinates.
\subsection*{Part I: Comparing to \cite{JJTI2013}}

Using the Lagrangian mass coordinates formulation in Subsection \ref{sub5.2}, we set the unknown density  $\sigma(x,t)=\rho(r(x,t),t)$ and the velocity $v(x,t)=u(r(x,t),t)$.
Then, the linearized density $\delta\sigma$ and the linearized velocity $\delta v$ around the stationary solution $(\rho_\mu,0,r_\mu)$ satisfy
\begin{align*}
\delta\sigma(x,t)=&\frac{d}{d\epsilon}(\rho_\mu +\epsilon\delta\rho)(r_\mu(x)+\epsilon\delta r(x,t),t)\Big|_{\epsilon=0}=(\rho_\mu(r_\mu))_{r_\mu}\delta r+\delta\rho(r_\mu,t),\\
 \delta v(x,t)=&\delta u(r_\mu,t).
\end{align*}
Therefore, by  \eqref{3.1}, we have
\begin{align*}
\delta\sigma_t(x,t)
=&((\rho_\mu(r_\mu))_{r_\mu} \delta r)_t+\delta\rho_t(r_\mu,t)=(\rho_\mu(r_\mu))_{r_\mu}\delta v- \frac{1}{r^2}(r^2\rho_\mu \delta v)_r\Big|_{r=r_\mu}\\
=&-\rho_\mu \frac{1}{r^2}(r^2\delta v)_r\Big|_{r=r_\mu}.
\end{align*}
By \eqref{rmu}, we have $\partial_{r_\mu}=4\pi \rho_\mu  r_\mu^2\partial_x$. There holds
\begin{align*}
\delta\sigma_t=-4\pi\rho_\mu ^2(r^2_\mu\delta v)_x,
\end{align*}
which coincides with (2-1) in \cite{JJTI2013} for the polytropes.

By \eqref{3.2} and \eqref{3.3}, we have
\begin{align*}
\delta v_t&=\frac{1}{\rho_\mu }\left(\frac{1}{r^2}\left(\frac{4}{3}\nu_1+\nu_2\right)(r^2 u)_r\right)_r-\frac{1}{\rho_\mu }(P'(\rho_\mu)\delta\sigma)_r-\left(\int^r_0 \frac{4\pi}{y^2}\int_0^y\delta\sigma(s)s^2dsdy\right)_r\Big|_{r=r_\mu}\\
&=16\pi^2 r_\mu^2 \left(\rho_\mu \left(\frac{4}{3}\nu_1+\nu_2\right)_x(r_\mu^2 v)_x\right)-4\pi r_\mu^2(P'(\rho_\mu)\delta\sigma)_x-\frac{1}{2}\frac{x}{\pi r_\mu^5}\int_0^x \frac{\delta\sigma(y,t)}{\rho_\mu ^2(y)}dy,
\end{align*}
which coincides with (2-2) in \cite{JJTI2013}  for the polytropes.

The linearized boundary conditions
\begin{align*}
&\delta u(0,t)=0,\ \delta\rho(R(t),t)=0,\\
&-\frac{4\nu_1}{3}\left(\delta u_r -\frac{\delta u}{r}\right)-\nu_2\left( \delta u_r + \frac{2\delta u}{r}\right)\Bigg|_{r=R(t)}=0
\end{align*}
become
\begin{align*}
&\delta v(0,t)=0,\quad \delta\sigma(M,t)=0,\\
&-\frac{4\nu_1}{3}\left( 4\pi \rho_\mu  r_\mu^2 \delta v_x-\frac{\delta v}{r_\mu}\right)-\nu_2\left( 4\pi \rho_\mu  r_\mu^2\delta v_x + \frac{2\delta v}{r_\mu}\right)\Bigg|_{x=M}=0,
\end{align*}
which coincides with (2-4) in \cite{JJTI2013}  for the polytropes.

\subsection*{Part II: Comparing to \cite{LTXZZH20162}}


We adopt the Lagrangian particle trajectory formulation which can reduce the original free boundary problem to a problem on the fixed domain $x\in [0,R_\mu]$. In this part, let $x$ be the distance to the origin for the Lane-Emden solution. Set the Lagrangian variable $r(x,t)$ such that
\begin{align*}
&r_t(x,t)=u(r(x,t),t),\ \int^x_0\rho_\mu(s)s^2ds=\int^{r(x,t)}_0\rho(s,t)s^2ds,\ x\in (0,R_\mu),\\
&r(0,t)=0,\ r(R_\mu,t)=R(t).
\end{align*}

Denote
\begin{align*}
f(x,t)=\rho(r(x,t),t),\ v(x,t)=u(r(x,t),t).
\end{align*}
Then, the linearized perturbations of $f,v$ are
\begin{align}
\delta f(x,t)&=\frac{d}{d\epsilon}(\rho_\mu+\epsilon\delta\rho)(x+\epsilon\delta r)|_{\epsilon=0}=(\rho_\mu(x))_x\delta r+\delta\rho,\label{9.21}\\
\delta v(x,t)&=\frac{d}{d\epsilon}(0+\epsilon\delta u)(x+\epsilon\delta r)|_{\epsilon=0}=\delta u(x,t).\label{9.23}
\end{align}
By \eqref{3.1}, we get
\begin{align}
\notag\delta f_t(x,t)&=((\rho_\mu)_x\delta r+\delta\rho(x,t))_t=(\rho_\mu)_x\delta v-\frac{1}{x^2}(x^2\rho_\mu\delta v)_x\\
&=-\rho_\mu\frac{1}{x^2}(x^2\delta v)_x=-\rho_\mu\delta v_x-2\rho_\mu\frac{\delta v}{x}.\label{9.27}
\end{align}
From another point of view, by \eqref{5.15},
 we have
\begin{align}\label{5.21}
\delta f&=\frac{d}{d\epsilon}\frac{x^2\rho_\mu(x)}{(x+\epsilon\delta r)^2(x,t)(x+\epsilon\delta r)_x(x,t)}\Big|_{\epsilon=0}=-2\frac{\rho_\mu}{x}\delta r-\rho_\mu\delta r_x.
\end{align}
Then, there holds
\begin{align*}
\delta f_t&=-2\rho_\mu\frac{\delta r_t}{x}-\rho_\mu\delta r_{xt},
\end{align*}
which coincides with \eqref{9.27} because of $\delta r_t=\delta v$.

By \eqref{9.21} and \eqref{5.21}, we have
\begin{align*}
\delta\rho=\left(-2\frac{\rho_\mu}{x}-(\rho_\mu)_x\right)\delta r-\rho_\mu\delta r_x.
\end{align*}
Applying it by $\partial_t$ and omitting $\delta$, by \eqref{9.23},  there holds
\begin{align*}
\rho_t=\left(-2\frac{\rho_\mu}{x}-(\rho_\mu)_x\right)v-\rho_\mu v_x.
\end{align*}
That is
\begin{align}
\rho_t+\frac{1}{x^2}(x^2\rho_\mu v)_x=0,
\end{align}
which coincides with \eqref{3.1}.

Applying $\partial_t$ to \eqref{5.23} or (A.41a) in \cite{LTXZZH20162}, it obtains that
\begin{align}\label{9.35}
x\rho_\mu w_{ttt}-\left(3(\rho_\mu P'(\rho_\mu))_x-4(P(\rho_\mu))_x\right)w_t-x^{-3}(\rho_\mu P'(\rho_\mu)x^4 w_{xt})_x=\nu(xw_{xtt}+3w_{tt})_x.
\end{align}
Due to $u=xw_{t}$, we have
\begin{align}\label{9.36}
u_{tt}-\frac{1}{x\rho_\mu}\left(3(\rho_\mu P'(\rho_\mu))_x-4(P(\rho_\mu))_x\right)u-\frac{1}{x^3\rho_\mu}\left(\rho_\mu P'(\rho_\mu)x^4 \left(\frac{u}{x}\right)_{x}\right)_x
=\frac{\nu}{\rho_\mu}\left(x\left(\frac{u}{x}\right)_{xt}+3\left(\frac{u}{x}\right)_{t}\right)_x.
\end{align}
Since
\begin{align*}
(P(\rho_\mu))_x=-\frac{\rho_\mu}{x^2}\int^x_0 4\pi\rho_\mu s^2ds,
\end{align*}
we have
\begin{align*}
-4\pi \rho_\mu x^2=\left(\frac{P(\rho_\mu)_x x^2}{\rho_\mu}\right)_x.
\end{align*}
Hence, \eqref{9.36} coincides with \eqref{3.4}.

Meanwhile, multiplying \eqref{9.35} by $x^3w_{tt}$ and integrating from $0$ to $R_\mu$,  there holds
\begin{align}
\notag&\frac{1}{2}\frac{d}{dt}\int_0^{R_\mu} \left( x^4\rho_\mu|w_{tt}|^2+\rho_\mu P'(\rho_\mu)x^4|w_{xt}|^2-x^3(3(\rho_\mu P'(\rho_\mu))_x-4(P(\rho_\mu))_x)|w_t|^2\right)dx\\
&+\nu\int_0^{R_\mu} x^2(3w_{tt}+xw_{ttx})^2dx=0.\label{9.40}
\end{align}

Note that
\begin{align*}
&\langle D_\mu u_t,u_t\rangle_{X_\mu}=\nu\int_0^{R_\mu} x^2\left(3w_{tt}+xw_{ttx}\right)^2dx,\\
&\langle L_\mu u,u\rangle_{X_\mu}=\int_0^{R_\mu} \left(\rho_\mu P'(\rho_\mu)x^4|w_{xt}|^2-x^3(3(\rho_\mu P'(\rho_\mu))_x-4P(\rho_\mu)_x)|w_t|^2\right)dx.
\end{align*}
Hence, \eqref{9.40} is equivalent to
\begin{align*}
\frac{1}{2}\frac{d}{dt}\left(\langle u_t,u_t\rangle_{X_\mu}+\langle L_\mu u,u\rangle_{X_\mu}\right)+\langle D_\mu u_t,u_t\rangle_{X_\mu}=0.
\end{align*}

Moreover, by \eqref{9.36}, changing the variable $x$ by $r_\mu$ (in fact, from the arguments above, it cannot cause confusion), we have
\begin{align}
\notag&u_{tt}-\frac{1}{r_\mu\rho_\mu}\left(3(\rho_\mu P'(\rho_\mu))_{r_\mu}-4(P(\rho_\mu))_{r_\mu}\right)u-\frac{1}{{r_\mu}^3\rho_\mu}\left(\rho_\mu P'(\rho_\mu)r_\mu^4 \left(\frac{u}{r_\mu}\right)_{r_\mu}\right)_{r_\mu}\\
&=\frac{\nu}{\rho_\mu}\left(r_\mu\left(\frac{u}{r_\mu}\right)_{r_\mu t}+3\left(\frac{u}{r_\mu}\right)_{t}\right)_{r_\mu}.\label{9.45}
\end{align}
Since
\begin{align*}
r_\mu^3=\frac{3}{4\pi}\int_0^x\frac{1}{\rho_\mu(y) }dy,\ \partial_{r_\mu}=4\pi \rho_\mu  r_\mu^2\partial_x,\ u=-\frac{\phi}{r^2_\mu},\
 (P(\rho_\mu))_{r_\mu}=-\frac{\rho_\mu}{x^2}\int^{r_\mu}_0 4\pi\rho_\mu s^2ds,
\end{align*}
where $x$ denote the Lagrangian mass coordinates without abusing notations, \eqref{9.45} becomes
\begin{align*}
-\frac{\phi_{tt}}{16\pi^2r_\mu^4}=\frac{(P(\rho_\mu))_x}{\pi r^3_\mu}\phi-(\nu\rho_\mu\phi_{xt}+P'(\rho_\mu)\rho_\mu^2\phi_x)_x,
\end{align*}
which coincides with (3-1) in \cite{JJTI2013}  for the polytropes.

\section*{Acknowledgements}
Cheng's research was supported in part by the NSF of China
(Grant No.   12371191), the Scientific and Technological Project of Jilin Provinces Education Department (Grant No. JJKH20231123KJ).

\end{document}